\newtheorem{theorem}{Theorem}[section]
\newtheorem{lemma}[theorem]{Lemma}
\newtheorem{corollary}[theorem]{Corollary}
\newtheorem{claim}[theorem]{Claim}
\theoremstyle{definition}
\newtheorem{definition}[theorem]{Definition}
\newtheorem{remark}[theorem]{Remark}
\newtheorem{question}[theorem]{Question}
\newtheorem{fact}[theorem]{Fact}
\newcommand{\Z}{\mathbb{Z}}
\begin{document}

\title{Non-equilibrium multi-scale analysis and coexistence in competing first passage percolation}
\author{Thomas Finn\footnote{t.j.finn@bath.ac.uk, University of Bath, Deptartment of Mathematical Sciences, supported by a scholarship from the EPSRC Centre for Doctoral Training in Statistical Applied Mathematics at Bath (SAMBa), under the project EP/L015684/1 and EB/MA1206A.} \quad Alexandre Stauffer\footnote{
astauffer@mat.uniroma3.it, Universit\`a Roma Tre, Dipartimento di Matematica e Fisica; University of Bath, Department of Mathematical Sciences, supported by EPSRC Fellowship EP/N004566/1.}}
\date{}

\maketitle

\begin{abstract}
   The main contribution of this paper is the development of a novel approach to multi-scale analysis that we believe can be used to analyse processes with non-equilibrium dynamics. 
   Our approach will be referred to as \emph{multi-scale analysis with non-equilibrium feedback} and will be used to analyse a natural random growth process with competition 
   on $\mathbb{Z}^d$ called \emph{first passage percolation in a hostile environment} (FPPHE)
   that consists of two first passage percolation processes $FPP_1$ and $FPP_{\lambda}$ that compete for the occupancy of sites. 
   Initially, $FPP_1$ occupies the origin and spreads through the edges of $\mathbb{Z}^d$ at rate 1, while $FPP_{\lambda}$ is initialised at sites called \emph{seeds} that are distributed according 
   to a product of Bernoulli measures of parameter $p\in(0,1)$, where a seed remains dormant until $FPP_1$ or $FPP_{\lambda}$ attempts to occupy it 
   before then spreading through the edges of $\mathbb{Z}^d$ at rate $\lambda>0$. 
   Two fundamental challenges of FPPHE that our approach is able to handle 
   are the absence of monotonicity (for instance, adding seeds could be benefitial to $FPP_1$ instead of $FPP_\lambda$) 
   and its non-equilibrium dynamics; such characteristics, for example,   
   prevent the application of a more standard multi-scale analysis.
   As a consequence of our main result for FPPHE, we establish a coexistence phase for $d\geq3$, answering an open question in \cite{sidoravicius2019multi}. 
   This exhibits a rare situation where a natural random competition model on $\mathbb{Z}^d$ observes coexistence for processes with \emph{different} speeds. 
   Moreover, we are able to establish the stronger result that $FPP_1$ and $FPP_{\lambda}$ can both occupy a \emph{positive density} of sites with positive probability, 
   which is in stark contrast with other competition processes.
\end{abstract}

\section{Introduction}

In this paper we consider competing random spatial growth on $\mathbb{Z}^d$ and study the phenomena of \emph{coexistence}, in which the two types simultaneously grow without bound.
Our focus is competing first passage percolation which has been studied for over twenty years and many intriguing questions remain open (see Section~\ref{sec:related_models} for more detail).
Understanding for what choice of parameters coexistence can occur is a fundamental question and is often difficult due to intrinsic lack of monotonicity (both processes have to grow indefinitely without blocking 
the other one) and long-range dependencies in such models.

To overcome difficulties inherent in competing first passage percolation, we introduce a novel approach to a widely used technique known as multi-scale analysis (or multi-scale renormalisation). 
We refer to our approach as \emph{multi-scale analysis with non-equilibrium feedback} and apply it 
to analyse a challenging growth process known as \emph{first passage percolation in a hostile environment} (FPPHE)~\cite{sidoravicius2019multi}, in which two 
types concurrently grow inside $\Z^d$, occupying the sites of $\Z^d$ while competing with each other for space.
As a consequence of our main result, we establish the regime of coexistence for FPPHE on $\mathbb{Z}^d$ that unlike similar models occurs when the two types grow at \emph{different} rates.

Multi-scale analysis is a powerful technique that has been used to analyse a wide range of difficult processes, especially those with \emph{slow decay of correlations}; recent examples include 
random interlacements, dependent percolation processes, and interacting particle systems~\cite{kesten2003branching,kesten2005spread,kesten2006,sznitman2010,peres2013,candellero2018,gracar2019}.
Despite its power, multi-scale analyses usually rely strongly on certain crucial properties, such as \emph{stationarity} (that is, the process is \emph{in
equilibrium}) and \emph{monotonicity}. Such properties are present in all the aforementioned works, but are not satisfied by FPPHE and several other important processes, 
one additional example being 
a non-equilibrium model for the spread of an infection among random walk particles, 
whose analysis is highlighted as a fundamental challenge in~\cite{kesten2005spread,kesten2006}; a similar question was recently raised in~\cite{grimmett2020brownian}.
We believe our idea could be applicable to analyse this and other processes with non-equilibrium dynamics. 

Below we first introduce FPPHE and discuss our results for that model, 
and then turn to presenting the idea behind our multi-scale analysis with non-equilibrium feedback.

\subsection{First passage percolation in a hostile environment (FPPHE)}
\label{sec:FPPHE}

FPPHE is a natural random growth process with competition introduced by Sidoravicius and Stauffer~\cite{sidoravicius2019multi} to analyse a notoriously challenging random aggregation model called \emph{multiparticle diffusion limited aggregation} (MDLA).
Nonetheless, FPPHE is an interesting model to analyse in its own right, showing several phase transitions and a wide range of behaviour. 
In fact, most of the work in \cite{sidoravicius2019multi} is devoted to studying FPPHE, from which results about MDLA can be deduced. 

In FPPHE,
there are two first passage percolation processes $FPP_1$ and $FPP_{\lambda}$ that compete for the occupancy of sites on $\mathbb{Z}^d$. 
Initially the $FPP_1$ process only occupies the origin, while, following a product of Bernoulli measures of parameter $p\in(0,1)$ on all other sites of $\Z^d$, 
we place \emph{seeds} from which the $FPP_{\lambda}$ will eventually attempt to grow. 
From these initial conditions, FPPHE evolves in time as follows. 
The $FPP_1$ process spreads through the edges of $\mathbb{Z}^d$ at rate $1$. 
This means that, for any given edge $e$, when the $FPP_1$ process occupies for the first time one of the endpoints of $e$, 
then after a random amount of time (that is distributed as an exponential random variable of rate 1), 
the $FPP_1$ process attempts to occupy the other endpoint of $e$, with the attempt being successful if and only if that endpoint is not already occupied by either process. 
The $FPP_{\lambda}$ process, in turn, does not initially spread but remains dormant. 
When either the $FPP_1$ or $FPP_{\lambda}$ process attempts to occupy a site that has a seed, the attempt fails and the seed is \emph{activated}, 
meaning that the $FPP_{\lambda}$ process then spreads from that seed through the edges of $\mathbb{Z}^d$ at rate $\lambda>0$. 
The other seeds remain dormant until they are activated. 
Once a site is occupied by either process, 
it is occupied by that process henceforth and thus $FPP_1$ and $FPP_{\lambda}$ are in competition for the occupancy of sites of $\mathbb{Z}^d$. 

Initially $FPP_1$ has the advantage as it is able to spread from time $0$ while $FPP_{\lambda}$ remains dormant in seeds until activated by either process. 
On the other hand, $FPP_{\lambda}$ has the advantage of occupying infinitely many seeds initially
and so the spread of $FPP_{\lambda}$ from any of these seeds could allow $FPP_{\lambda}$ to block the spread of $FPP_1$. Understanding how this delicate balance 
is exhibited under different choices of $\lambda$ and $p$ is fundamental in the analysis of FPPHE.

Figure~\ref{fig:simul} illustrates a simulation of FPPHE in two dimensions for two different values of $p$ and $\lambda$.
We highlight that in Figure~\ref{fig:psmall} the small value of $p$ and relatively large value of $\lambda$ makes $FPP_1$ grow `almost `radially'' from the origin, 
but even if $\lambda$ is not very close to $1$, $FPP_\lambda$ manages to 
conquer a quite large region, highlighting the strong dependences in the process. 
On the other hand, when $p$ is close to $1-p_c^\text{site}$ (where $p_c^{\text{site}}=p_c^{\text{site}}(d)$ is the critical probability for independent site percolation on 
$\mathbb{Z}^d$), as in Figure~\ref{fig:plarg}, 
$FPP_1$ gives rise to a more irregular shape as it needs to deviate from possible blocks that $FPP_\lambda$ may create as it eventually occupies part of the thin set of non-seed sites. 
(We refer the reader to \cite{grimmett1999grundlehren} for a background on percolation.)
This makes regions of $\Z^d$ that are close to one another be occupied by the process at times that are far apart.
For example, in Figure~\ref{fig:plarg}, $FPP_\lambda$ blocked the growth of $FPP_1$ to the left of the origin (the origin is close to the middle of the picture, where the black region is), 
forcing $FPP_1$ to make a long detour to reach regions to the left of the origin. 

\begin{figure}
\centering
\begin{subfigure}{.5\textwidth}
  \centering
  \includegraphics[width=.95\linewidth]{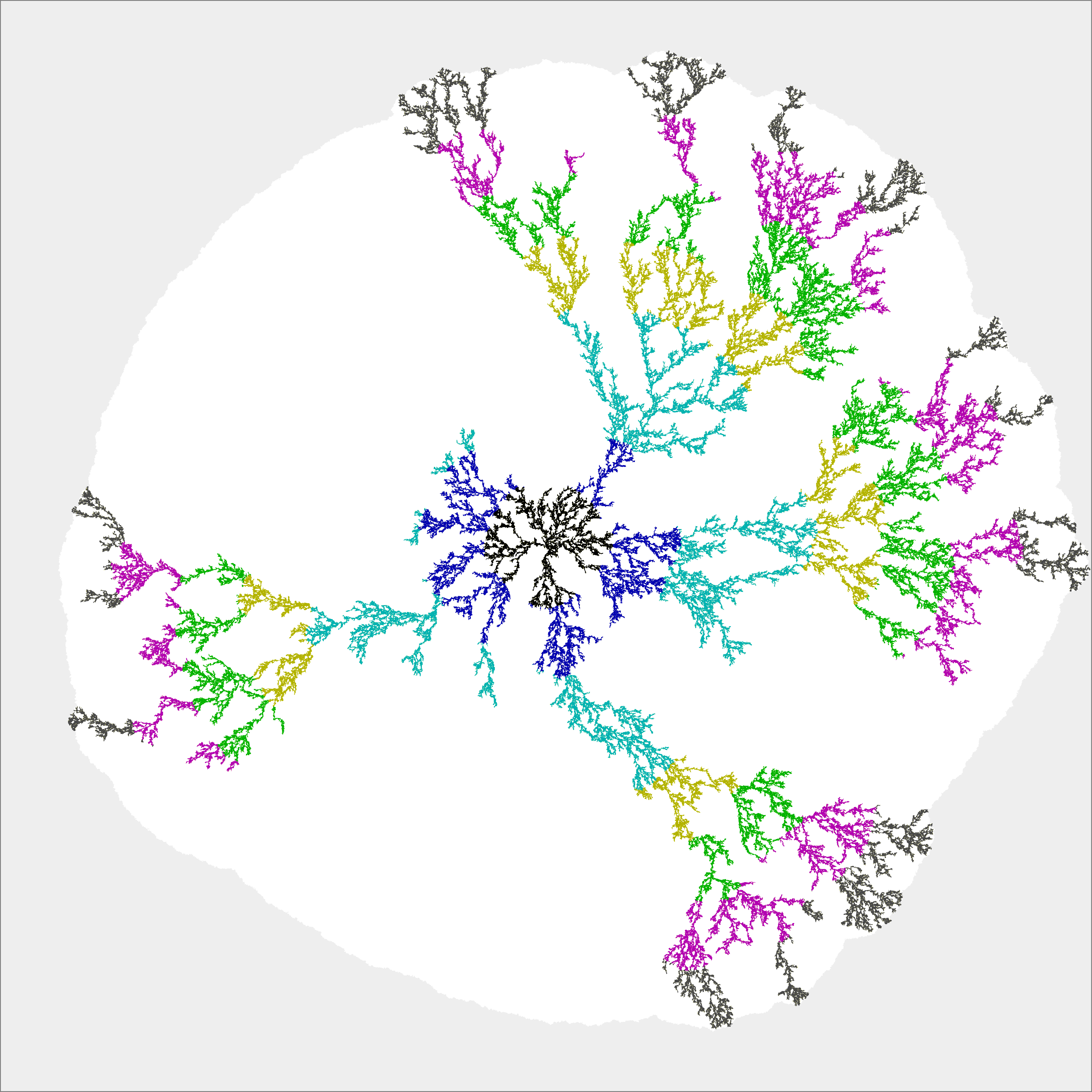}
  \caption{$p=0.03$, $\lambda=0.7$}
  \label{fig:psmall}
\end{subfigure}%
\begin{subfigure}{.5\textwidth}
  \centering
  \includegraphics[width=.95\linewidth]{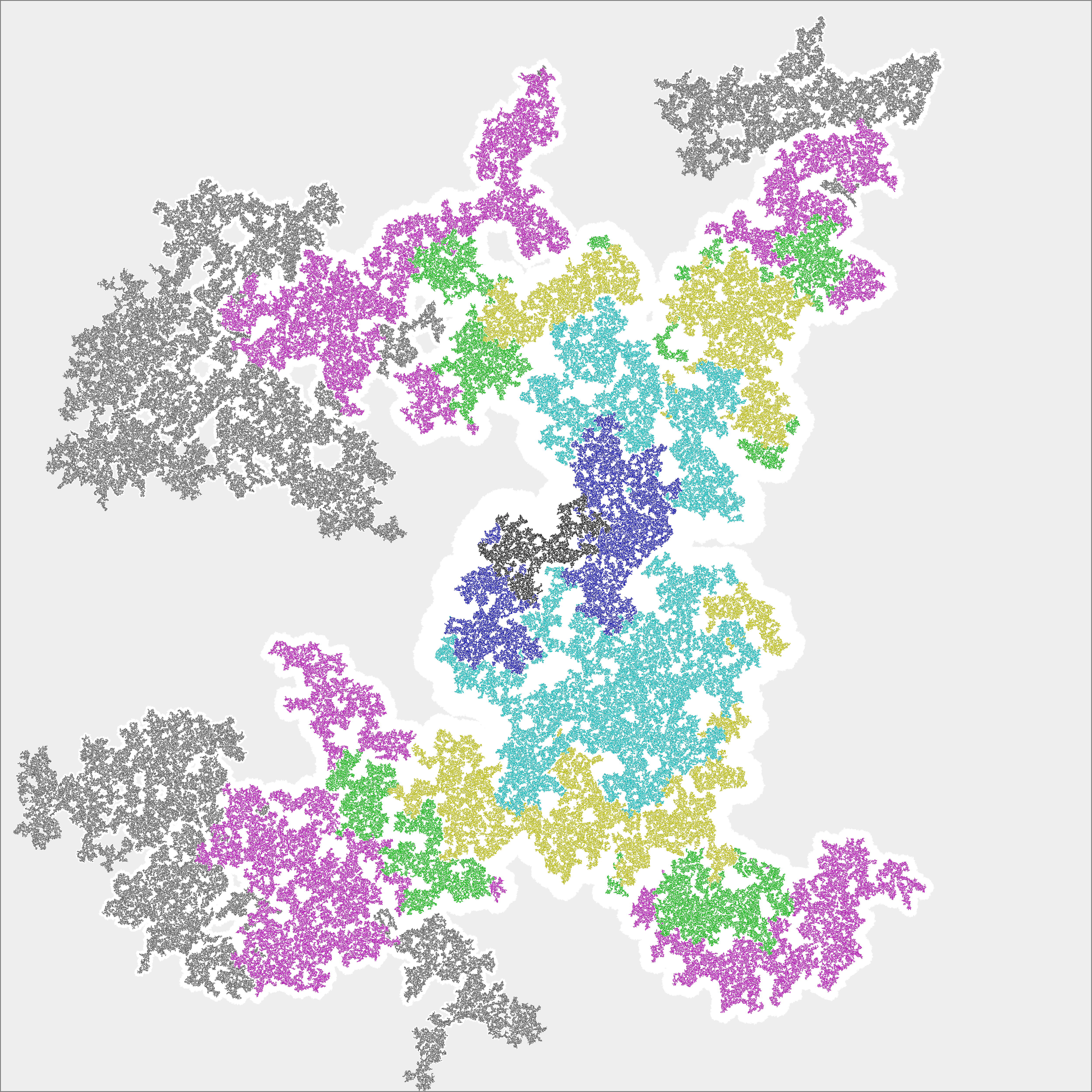}
  \caption{$p=0.4$, $\lambda=0.008$}
  \label{fig:plarg}
\end{subfigure}
\caption{Simulation of FPPHE in two dimensions. The colored areas represent the sites occupied by $FPP_1$, with different colors representing different epochs of the evolution of $FPP_1$. 
   The white area represents the sites occupied by $FPP_\lambda$, while the light gray boundary area represents unoccupied sites, including inactive seeds. }
\label{fig:simul}
\end{figure}

We say that $FPP_1$ (resp.\ $FPP_{\lambda}$) \emph{survives} if in the limit as time goes to infinity $FPP_1$ (resp.\ $FPP_{\lambda}$) occupies an infinite \emph{connected} 
component of $\mathbb{Z}^d$. 
Note that $FPP_{\lambda}$ initially has infinitely many seeds, so the requirement of a connected component is natural. 
If a process does not survive, we say that it \emph{dies out}. 
Hence, if $FPP_{\lambda}$ dies out, then $FPP_{\lambda}$ is an infinite collection of connected regions, each of which is almost surely of finite size. 
When both processes survive, we say that \emph{coexistence} occurs or that $FPP_1$ and $FPP_{\lambda}$ \emph{coexist}. 

As at least one process must survive, 
there are three possible phases that FPPHE can observe. The first is the \emph{extinction phase} where $FPP_1$ dies out almost surely (and thus $FPP_{\lambda}$ survives almost surely). 
Note that there is always a positive probability of $FPP_1$ dying out\footnote{For example, with probability $p^{2d}>0$, 
every site neighbouring the origin hosts a seed and, consequently, $FPP_1$ dies out.}, thus the almost sure criterion is essential in the definition of the extinction phase. 
Moreover, if $p>1-p_c^{\text{site}}$, then for any $\lambda>0$, 
FPPHE is in the extinction phase as the origin is almost surely contained in a finite cluster of non-seeds. Consequently, FPPHE is most interesting when $p < 1-p_c^{\text{site}}$. 
Similarly, $FPP_1$ dies out almost surely if $\lambda \geq 1$ (see Section~\ref{sec:known_results} for more details) and so we focus on the case $\lambda < 1$.
The second phase is when, with positive probability, $FPP_1$ survives and $FPP_{\lambda}$ dies out, which we refer to as the \emph{strong survival phase}. 
Ultimately, if with positive probability both $FPP_1$ and $FPP_{\lambda}$ survive, we say that FPPHE is in the \emph{coexistence phase}. 
For $d=1$ it is immediate that neither the strong survival or coexistence phase can exist. 

\begin{figure}
\centering
\begin{subfigure}{.33\textwidth}
  \centering
  \includegraphics[width=.8\linewidth]{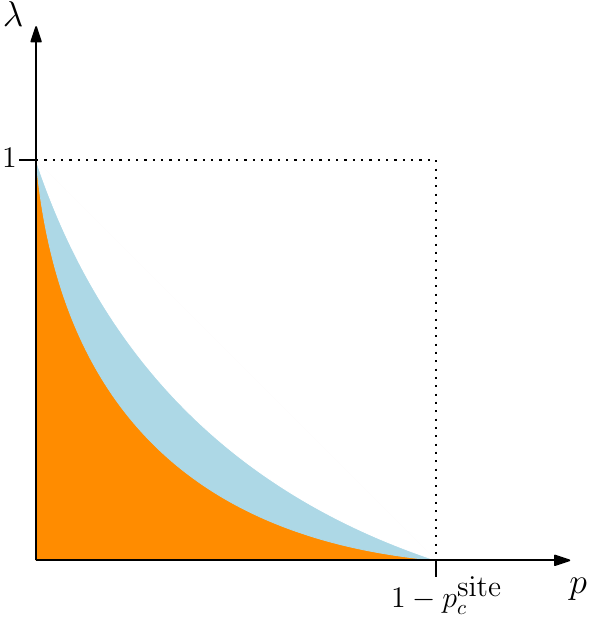}
  \caption{$d=2$}
  \label{fig:phase1a}
\end{subfigure}%
\begin{subfigure}{.33\textwidth}
  \centering
  \includegraphics[width=.8\linewidth]{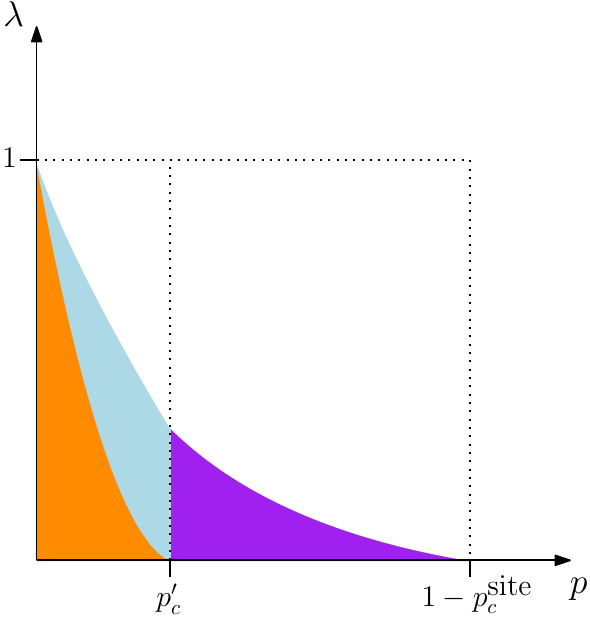}
  \caption{$d\geq3$}
  \label{fig:phase1b}
\end{subfigure}
\begin{subfigure}{.33\textwidth}
  \centering
  \includegraphics[width=.8\linewidth]{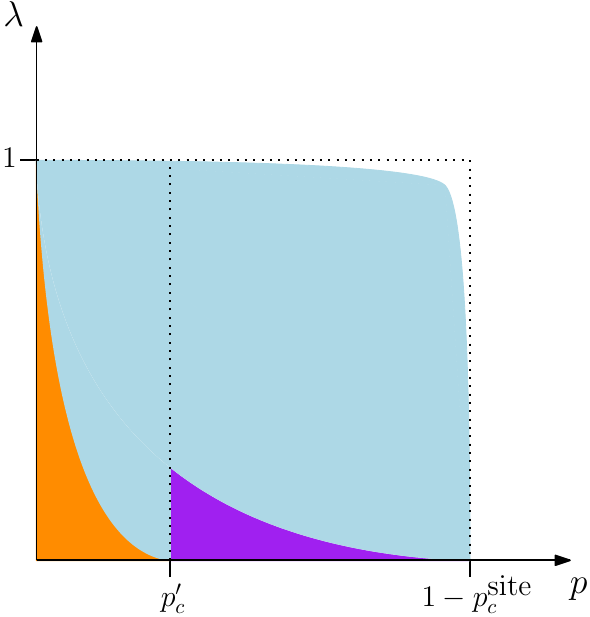}
  \caption{$d\geq3$ (known)}
  \label{fig:phaseknown}
\end{subfigure}
\caption{Possible phase diagrams for the behaviour of FPPHE, as observed via simulations, split into the cases $d=2$ and $d\geq3$, and the known phase diagram for $d\geq3$. In Figure~\ref{fig:phase1a} and Figure~\ref{fig:phase1b}, the orange regions represent where we expect there to be strong survival 
   while the white regions represent where we expect extinction. The purple region in Figure \ref{fig:phase1b} represents the regime of coexistence.
   The blue regions represent the cases where the transition between the phases is not clear even from simulations. In particular, for $d=2$, it is unclear whether there is a regime of coexistence at all, 
   while for $d\geq 3$ it is expected that the purple region extends to the left of $p'_c$, though simulations are not at all conclusive. 
   In Theorem~\ref{maintheorem}, we prove for $d\geq3$ and $p<1-p_c^{\text{site}}$, if $\lambda$ is small enough, then $FPP_1$ survives.
   A finer description of the phases is illustrated in Figure~\ref{fig:phaseknown}; the orange region represents the strong survival phase established in Theorem~\ref{thm:strongsurvival} and \cite{sidoravicius2019multi}, 
   while the purple region represents the coexistence regime given by Corollary~\ref{maincorollary}. 
   The white region is the extinction regimes discussed in Section~\ref{sec:FPPHE} and Section~\ref{sec:known_results}, while the phases in the blue region are not known.} 
\label{fig:phase1}
\end{figure}

It seems natural to believe that, as $p$ or $\lambda$ is increased, 
we observe a monotone transition from a regime of strong survival to a regime of extinction (possibly passing, on the way, through a regime of coexistence).
The phase diagram of FPPHE, as observed via simulations, is illustrated in Figure~\ref{fig:phase1a} and Figure~\ref{fig:phase1b}.
However, as counter-intuitive as it may appear, FPPHE is not a monotone process at all.
To explain this, note that once the locations of the seeds and the values of the passage 
times\footnote{For an edge $(u,v)$ first occupied by $FPP_1$ (resp., $FPP_\lambda$) at $u$, the passage time is the time that $FPP_1$ (resp., $FPP_\lambda$) then takes to attempt to occupy 
$v$ from $u$.} 
have been sampled, 
the evolution of FPPHE becomes deterministic. 
The lack of monotonicity in FPPHE corresponds to the fact that it is possible to choose the locations of the seeds and the values of the passage times such that adding a single seed can be benefitial to $FPP_1$ (instead of 
$FPP_\lambda$). 
This may sound 
counter-intuitive at first, but the idea is that the addition of a seed may slow down $FPP_1$ \emph{locally}, when that seed is activated, 
but this slow down could have the additional effect of delaying the activation of other seeds that are further away along that direction.
It is this delay on activating some seeds that could be helpful for $FPP_1$, aiding its growth along nearby directions. 
By properly choosing the passage times, the addition of a single seed could even 
make $FPP_1$ switch from non-survival to survival. We further note that in a very recent work~\cite{candellero2020}, a stronger notion of non-monotonicity is established;
it is proved that there are graphs for which \emph{the probability that $FPP_1$ survives is not 
a monotone function of $p$ and $\lambda$} (with several phase transitions taking place). 
Even though it is not known whether FPPHE on $\Z^d$ is monotone according to the latter notion, 
the lack of the former notion already creates several problems for the development of a standard multi-scale analysis. 
The approach we develop is able to circumvent this issue, analyzing FPPHE without relying on monotonicity (in particular, we do not use sprinkling ideas).

\subsection{Our results for FPPHE}

Our main result for FPPHE is that if $p<1-p_c^{\text{site}}$ and $\lambda$ is small enough, then with positive probability $FPP_1$ occupies a positive density of sites, immediately giving that $FPP_1$ survives. 
We note that this result is also true in dimension 2. This result is similar in spirit to that of Sidoravicius-Stauffer~\cite{sidoravicius2019multi}, but the latter establishes survival of $FPP_1$ by first fixing $\lambda$ and then 
setting $p$ small enough. The difference in the order at which the parameters are chosen does make a difference in the overall behavior of FPPHE, as illustrated in Figure~\ref{fig:simul} and more thoroughly explained 
in Section~\ref{sec:known_results}.

%

\begin{theorem}
   \label{maintheorem}
   Suppose $d\geq2$ and $p<1-p_c^{\emph{site}}$. There exists a constant $\tilde \lambda=\tilde \lambda(d,p)>0$ such that if $\lambda\in(0,\tilde \lambda)$, then with positive probability $FPP_1$ occupies a positive density of sites.
\end{theorem}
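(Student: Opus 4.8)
The plan is to carry out the multi-scale renormalisation announced in the introduction, propagating up the scales two coupled pieces of information: a \emph{spatial} one (which sites $FPP_1$ manages to conquer) and a \emph{temporal} one (the times at which $FPP_1$ reaches them), and feeding the temporal bounds back into the inductive definition of a ``good'' region. This self-referential coupling is precisely the non-equilibrium feedback, and designing it so that the induction closes is the crux of the proof.

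Concretely, I would fix a rapidly growing sequence of scales $L_k$ (say $L_{k+1}=\lceil L_k^{\alpha}\rceil$ for a suitable $\alpha\in(1,2)$) and declare a box $Q$ of side $L_k$ to be \emph{good} if, whenever $FPP_1$ first reaches $\partial Q$ at a time that is not much larger than $\mathrm{dist}(0,Q)$, one has: $FPP_1$ crosses $Q$ within an additional time $O(L_k)$; $FPP_1$ occupies at least a fixed fraction $c>0$ of the sites of $Q$, with $c$ independent of $k$; and every connected cluster of $FPP_\lambda$-occupied sites met inside $Q$ has diameter $o(L_k)$, so that $FPP_\lambda$ cannot build a surface separating $Q$. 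The base case is scale $0$: since $p<1-p_c^{\mathrm{site}}$ the non-seed sites percolate, so $FPP_1$ always has room to grow and to detour; a box containing no seed is automatically good, and when seeds are present the slow rate $\lambda$ keeps each $FPP_\lambda$ blob microscopic before $FPP_1$ engulfs it. Hence, taking $\lambda$ small and $L_0$ appropriately large, the probability that a scale-$0$ box is bad can be made smaller than any prescribed $\varepsilon_0>0$ — this is the only place the hypothesis $\lambda<\tilde\lambda$ is used.

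The inductive step is the heart of the argument and I expect it to be the main obstacle. One shows that a scale-$(k+1)$ box can fail to be good only if it contains at least two well-separated bad scale-$k$ sub-boxes, or if the $FPP_1$ front enters it already delayed; the first event is handled by a union bound over the polynomially-many possible locations of the two bad sub-boxes, which, combined with a super-exponential inductive bound on $P(\text{scale-}k\text{ box is bad})$, yields a doubly-exponential decay and closes the probabilistic recursion. The genuinely delicate point is that the state of a scale-$k$ sub-box — how far $FPP_\lambda$ has grown inside it — is not a fixed environment but depends on \emph{when} the $FPP_1$ front arrives there, which depends on the entire earlier evolution, and the lack of monotonicity forbids the usual coupling and stochastic-domination shortcuts. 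To deal with this I would run, alongside the probability bound, a \emph{deterministic} inductive implication of the form ``good scale-$k$ sub-boxes $+$ controlled entry time $\Rightarrow$ controlled exit time $+$ controlled $FPP_\lambda$ growth across the scale-$(k+1)$ box''. The controlled exit time produced at level $k$ is exactly what makes the goodness events of the level-$k$ sub-boxes \emph{applicable} inside the level-$(k+1)$ box, and one must verify that the accumulated time losses — $O(L_k)$ per crossed sub-box plus $o(L_k)$ for each detour around an $FPP_\lambda$ blob — sum to only $O(L_{k+1})$ rather than compounding across scales; this forces a precise bookkeeping of activation times, using that a seed near the front is activated roughly when the front passes it, not at time $0$.

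Finally, with the renormalisation in place I would conclude as follows. With positive probability the origin lies in the infinite non-seed cluster and is surrounded, at every scale $k$, by a good box centred near it; on this event the two coupled inductive statements hold for all $k$, so $FPP_1$ is never blocked, grows at positive speed in every direction, reaches a positive density of boxes of every scale, and occupies at least the fixed fraction $c$ of each such box. Since the goodness events are, up to the time-control that the feedback supplies, measurable with respect to the environment in a bounded enlargement of the box, a standard ergodicity and density argument upgrades this to the statement that $FPP_1$ occupies a positive density of sites of $\Z^d$ with positive probability, which in particular implies that $FPP_1$ survives. The argument uses $d\geq2$ only through the percolation of the non-seeds and the freedom of $FPP_1$ to detour in more than one direction, so it applies verbatim for $d=2$.
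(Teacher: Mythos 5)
Your high-level intuition---propagate a spatial statement and a temporal statement up the scales in tandem, and feed the temporal bound back into the inductive definitions---is exactly the right spirit, and your phrase ``deterministic inductive implication'' is close to what the paper calls the cascading property $(\mathrm{CAS}_k)$. But the way you propose to implement it has a genuine gap that, I think, is precisely the obstacle the paper's ``non-equilibrium feedback'' device is designed to overcome.

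You define a $k$-box to be good \emph{conditionally}: ``whenever $FPP_1$ first reaches $\partial Q$ at a time not much larger than $\mathrm{dist}(0,Q)$, then $FPP_1$ crosses quickly, occupies a positive fraction, and the $FPP_\lambda$ clusters inside stay small.'' The problem is that this event is not measurable with respect to the local environment in $Q$ (or even in a bounded enlargement of $Q$): whether $FPP_1$ spreads well inside $Q$, and how large the $FPP_\lambda$ clusters inside $Q$ become, both depend on whether $FPP_\lambda$ is already pushing into $Q$ from outside at the moment the $FPP_1$ front arrives, and that in turn depends on the entire earlier history of the process. So when you assert that ``the probability that a scale-$0$ box is bad can be made smaller than any prescribed $\varepsilon_0$,'' and later that the scale-$(k+1)$ recursion is closed by a union bound over two bad sub-boxes, you are implicitly treating your goodness event as local and translation-invariant, which it is not. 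Nor can you simply drop the conditional clause and declare a box bad when ``the $FPP_1$ front enters it already delayed'': you would then have to bound the probability of a delayed entry, which is precisely the non-local, history-dependent quantity one has no handle on (and non-monotonicity rules out a sprinkling/coupling shortcut).

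The paper's resolution is to keep two layers strictly separate. Layer one is a bona fide local ``good/bad'' dichotomy, built solely from the i.i.d.\ seeds and passage times (events $E_1,\dots,E_5$); disjoint boxes are independent, so the usual $\rho_k\le\rho_1^{A^{k-1}}$ recursion goes through with no reference to the dynamics. Layer two is the non-local classification of good boxes into positive or negative feedback according to entrance times. Crucially, the paper \emph{never} estimates the probability of negative feedback. Instead it proves a structural \emph{progenitor} property $(\mathrm{Prog}_k)$: the parent of a negative-feedback box is itself bad or of negative feedback, so every negative-feedback box traces back along the parent chain to a bad box. Combined with the fast/delay dichotomy ($(\mathrm{Fast}_k)$ and $(\mathrm{Del}_k^C)$), this confines negative feedback geometrically near bad boxes, and rarity of bad boxes (from layer one) does the rest. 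This is the idea missing from your proposal: you need a mechanism that converts ``delayed entry'' into something you can locate and count, rather than something whose probability you must estimate. Without the progenitor structure (or an equivalent substitute), your induction does not close: a scale-$k$ sub-box that is locally fine but whose entry time is spoiled from outside is invisible to your probability bound yet fatal to your deterministic implication.

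Two smaller points. Your base case reasoning (``when seeds are present the slow rate $\lambda$ keeps each $FPP_\lambda$ blob microscopic before $FPP_1$ engulfs it'') is only valid if the seed was activated from inside the box by $FPP_1$; if $FPP_\lambda$ reaches the box from outside, no local smallness-of-$\lambda$ argument helps, which is again the feedback issue. And your scale growth $L_{k+1}\approx L_k^\alpha$ with $\alpha<2$ is weaker than the paper's $L_k\approx L_{k-1}^d$; the paper's faster growth is used to absorb the combinatorial factors in the recursion for $\rho_k$, so with $\alpha<2$ you would need to re-examine whether the union bound closes.
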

From Theorem~\ref{maintheorem}, coexistence in dimension $d\geq3$ follows almost immediately. It is well known that $p_c^{\text{site}}<1/2$ for $d\geq3$
(cf.\ Campanino and Russo \cite[Theorem 4.1]{campanino1985upper}), and so the inequality $p_c^{\text{site}}<1-p_c^{\text{site}}$ holds. 
In this scenario, if $p\in(p_c^{\text{site}},1-p_c^{\text{site}})$, then there is almost surely an infinite connected component of seeds as well as an infinite connected component of non-seeds. 
An infinite connected component of seeds guarantees survival for $FPP_{\lambda}$ regardless of the value of $\lambda$, while $FPP_{\lambda}$ has a positive density of sites for all $p>0$. 
With this in mind, for $p\in(p_c^{\text{site}},1-p_c^{\text{site}})$, one can use Theorem~\ref{maintheorem} to set $\lambda$ small enough so that $FPP_1$ and $FPP_{\lambda}$ coexist and both occupy a positive density of sites.

In fact, the situation is more subtle. Through an enhancement argument~\cite{aizenman1991strict,balister2014essential}, 
we can deduce a regime of coexistence just below $p_c^{\text{site}}$. 
More precisely, after placing seeds with density $p$, 
we label as \emph{filled seeds} the set of seeds and sites that are disconnected from infinity by seeds. Let $\{o\xrightarrow{f}\infty\}$ be the event that a neighbour of the origin is contained in an infinite connected component of filled seeds. Define the critical probability of percolation for the process given by filled seeds as
$$
p'_c = \sup\left\{p\in[0,1]:\mathbb{P}_{1-p}(o\xrightarrow{f}\infty)=0\right\},
$$
where $\mathbb{P}_{1-p}$ is the probability measure\footnote{The choice of subscript $1-p$ is to remain consistent with later notation, where the emphasis is placed on non-seeds.} induced by the placing seeds with density $p$. Note that such a critical probability exists as the event of filled seeds percolating is monotone in $p$. Clearly $p'_c\leq p_c^{\text{site}}$ and if $p>p'_c$, then $FPP_{\lambda}$ occupies an infinite connected component of sites almost surely. It is known that $p'_c< p_c^{\text{site}}$ for $d=2,3$ by~\cite{aizenman1991strict,balister2014essential} and this strict inequality is conjectured to hold for higher dimensions.

\begin{corollary}
   \label{maincorollary}
   Suppose $d\geq3$, $p\in(p'_c,1-p_c^{\emph{site}})$ and $\tilde \lambda$ is as given in Theorem~\ref{maintheorem}. If $\lambda\in(0,\tilde \lambda)$, then with positive probability $FPP_1$ and $FPP_{\lambda}$ coexist and both occupy a positive density of sites.
\end{corollary}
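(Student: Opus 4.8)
The plan is to read the corollary off Theorem~\ref{maintheorem} together with the percolation facts for the seeds recalled above. First one checks that the hypotheses are consistent: for $d\geq 3$ one has $p_c^{\text{site}}<1/2$ by Campanino--Russo~\cite{campanino1985upper}, hence $p'_c\leq p_c^{\text{site}}<1-p_c^{\text{site}}$, so the interval $(p'_c,1-p_c^{\text{site}})$ is non-empty; for $d=3$ it is moreover strictly larger than $(p_c^{\text{site}},1-p_c^{\text{site}})$. Fix $p$ in this interval and $\lambda\in(0,\tilde\lambda)$. Since $p<1-p_c^{\text{site}}$, there is almost surely a unique infinite non-seed cluster $R^\infty$; since $p>p'_c$, there is almost surely a unique infinite cluster $\mathcal C$ of filled seeds, and by the ergodic theorem $\mathcal C$ has a deterministic density $\rho>0$.

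With this in hand, Theorem~\ref{maintheorem} applies (its hypotheses $p<1-p_c^{\text{site}}$ and $\lambda<\tilde\lambda$ are met), so the event $E:=\{FPP_1\text{ occupies a positive density of sites}\}$ has positive probability; on $E$, $FPP_1$ in particular survives.

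It then remains to treat $FPP_\lambda$, and I would show that $FPP_\lambda$ \emph{almost surely} occupies $\mathcal C$ up to a finite set, hence almost surely survives with density at least $\rho$. The key observation is that $FPP_1$ can never occupy a site of $\mathcal C$ outside the non-seed cluster of the origin: a seed of $\mathcal C$ blocks $FPP_1$, while a non-seed site of $\mathcal C$ lies in a \emph{finite} non-seed cluster, which $FPP_1$ can reach only if it is the cluster of the origin. I would then split into cases. If the origin lies in $R^\infty$, then the $FPP_1$-occupied set, being a connected set of non-seeds containing the origin, stays inside $R^\infty$ and is disjoint from $\mathcal C$. If the origin lies in a finite non-seed cluster $F$, then $FPP_1$ is confined to $F$, so it can meet $\mathcal C$ only inside the finite set $F$. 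In either case the growth of $FPP_\lambda$ within $\mathcal C\setminus F$ (with $F$ finite, possibly empty) is uncontested; using that some seed is almost surely activated (as recalled before the corollary, $FPP_\lambda$ a.s.\ survives when $p>p'_c$) and that $FPP_\lambda$ thereafter propagates at rate $\lambda>0$ with $FPP_1$ as its only obstruction, a site-by-site argument along the connected infinite set $\mathcal C$ shows that $FPP_\lambda$ eventually occupies all of $\mathcal C\setminus F$. Consequently $FPP_\lambda$ almost surely occupies an infinite connected component and a set of density $\rho>0$.

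Putting the two parts together, $\mathbb P\bigl(FPP_1\text{ and }FPP_\lambda\text{ both survive and both occupy a positive density of sites}\bigr)\geq\mathbb P(E)>0$, since on $E$ the conclusion for $FPP_1$ holds and the conclusion for $FPP_\lambda$ holds almost surely; this is exactly the corollary. I expect the only genuinely delicate point to be the third part — showing that $FPP_1$ cannot leak into $\mathcal C$ beyond a finite piece, and that $FPP_\lambda$ then covers \emph{all} of $\mathcal C$ rather than merely some infinite connected subset of it; the former is handled by the dichotomy "origin in $R^\infty$" versus "origin in a finite non-seed cluster", and the latter by the uncontested-growth observation together with the a.s.\ finiteness of first-passage times through the region not claimed by $FPP_1$.
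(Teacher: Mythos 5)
Your proposal is correct and follows essentially the same route as the paper: invoke Theorem~\ref{maintheorem} for $FPP_1$, and use the almost-sure percolation of filled seeds (when $p>p'_c$) together with the fact that $FPP_1$ can never enter the filled-seed region outside a finite piece to conclude that $FPP_\lambda$ almost surely survives and has positive density. The paper states these $FPP_\lambda$ facts more tersely (noting simply that an infinite filled-seed cluster forces survival, and that all seeds are eventually activated, giving density at least $p$), while you unpack them via the dichotomy on the origin's non-seed cluster and the ergodic density of $\mathcal C$; both are valid and amount to the same argument.
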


We believe the above result is the first example of a random competition model on $\mathbb{Z}^d$ where processes of different speeds 
coexist\footnote{We remark that Sidoravicius and Stauffer did prove coexistence for a simplified version of FPPHE, where passage times are 
deterministic~\cite{sidoravicius2019multi}. However, this adaptation renders the model simple enough so that coexistence can be derived from basic percolation results.}. 
In fact, we prove the stronger result that two processes of different speeds can both occupy a \emph{positive density} of sites with positive probability. 
This is in stark contrast to other natural random competition models on $\mathbb{Z}^d$, such as the widely studied two-type Richardson model, and highlights the rich behaviour of FPPHE. 
(We discuss results for other models more thoroughly in Section \ref{sec:related_models}.) 
We remark that the coexistence regime we establish is a bit particular, in the sense that $FPP_\lambda$ is guaranteed to survive already at time $0$. 
An interesting open problem is to establish the regime of coexistence for some $p<p'_c$; we discuss further open questions in Section~\ref{sec:conclusion}.

Corollary~\ref{maincorollary} leads one to consider the behaviour of FPPHE when $p<p'_c$ and our second result is a regime of strong survival in this direction. For $M\geq0$, let $\Lambda_M=[-M/2,M/2]^d$ and $\partial\Lambda_M$ denote the set of sites in $\Lambda_M$ that have a neighbour not contained in $\Lambda_M$. Let $\{o\xrightarrow{f}\partial\Lambda_M\}$ be the event that there is a path of filled seeds from a neighbour of the origin to $\partial\Lambda_M$. Define the critical probability
\begin{equation}
\label{p''_c}
p''_c = \sup\left\{p\in[0,1]:\exists a>0 \mbox{ s.t.\ for all large enough }M, \mathbb{P}_{1-p}(o\xrightarrow{f}\partial\Lambda_M)\leq \exp(-M^{a})\right\}.
\end{equation}
It is immediate that $p''_c\leq p'_c$ and we believe there is equality, but proving this is beyond the scope of this article. We prove that if $p<\min\{p''_c,1-p_c^{\text{site}}\}$, then for a sufficiently small choice of $\lambda$, there is strong survival. 
We note that we can extend this result to $d=2$. 

\begin{theorem}
	\label{thm:strongsurvival}
Suppose $d\geq2$ and $p<\min\{p''_c,1-p_c^{\emph{site}}\}$. There exists a constant $\tilde \lambda_1=\tilde \lambda_1(d,p)>0$ such that if $\lambda\in(0,\tilde \lambda_1)$, then there is strong survival.
\end{theorem}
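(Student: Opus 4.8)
The plan is to build on the multi-scale analysis with non-equilibrium feedback that already proves Theorem~\ref{maintheorem}. Since $p<\min\{p''_c,1-p_c^{\text{site}}\}\le 1-p_c^{\text{site}}$, that theorem applies directly, so for $\lambda$ below its threshold there is a positive-probability event on which $FPP_1$ survives (indeed occupies a positive density of sites). What remains is to exhibit a positive-probability sub-event on which, in addition, $FPP_\lambda$ dies out, i.e.\ every connected component of sites eventually occupied by $FPP_\lambda$ is finite. For this I would strengthen the ``good event'' that is propagated through the scales so that it also certifies that $FPP_\lambda$ never escapes the filled-seed cluster in which it was activated.

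First I would isolate the confinement mechanism in a single cluster. Fix a filled-seed cluster $\mathcal{C}$ (a cluster of seeds together with the sites it disconnects from infinity) of diameter $r$. When $FPP_1$ first touches $\mathcal{C}$ it activates a seed and $FPP_\lambda$ starts spreading from within $\mathcal{C}$ at rate $\lambda$, while $FPP_1$ continues at rate $1$ and, generically, surrounds $\mathcal{C}$ by a ``moat'' of $FPP_1$-occupied sites after an additional time of order $r$. If $\lambda$ is small enough — depending only on $d$ and $p$, which is where $\tilde\lambda_1$ (possibly smaller than $\tilde\lambda$) comes from — then in that time $FPP_\lambda$ can have advanced only a distance $O(\lambda r)\ll r$ into the non-seed sites bordering $\mathcal{C}$, so $FPP_\lambda$ is trapped in a fixed dilation of $\mathcal{C}$ and contributes only a finite component. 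The reason this needs the multi-scale machinery rather than a direct estimate is precisely the non-monotonicity and long-range dependence of FPPHE: the time at which $FPP_1$ reaches $\mathcal{C}$, and whether a neighbourhood of $\mathcal{C}$ is itself blocked enough to force $FPP_1$ on a long detour, are random quantities coupled to the whole prior history.

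The core step is to augment the inductive good event at scale $n$ with the clause: every filled-seed cluster meeting the scale-$n$ box is surrounded by $FPP_1$ before $FPP_\lambda$ leaves a fixed dilation of it, and hence $FPP_\lambda$'s occupied set inside the box is a disjoint union of components, each contained in such a dilation. To propagate this I would feed the time estimates already supplied by the non-equilibrium feedback (which control when $FPP_1$ reaches each region and how far detours forced by earlier blocks can extend) into a union bound over filled-seed clusters. Here $p<p''_c$ enters decisively: by the definition of $p''_c$ there is $a>0$ with $\mathbb{P}_{1-p}(o\xrightarrow{f}\partial\Lambda_M)\le\exp(-M^a)$ for all large $M$, so the probability that a filled-seed cluster of diameter $\ge M$ meets a given box is at most $(\text{volume})\cdot\exp(-M^a)$; with the super-polynomially growing scale sequence used in the analysis of Theorem~\ref{maintheorem}, this stretched-exponential decay dominates the combinatorial factors and lets the strengthened induction close, just as the weaker input $p<1-p_c^{\text{site}}$ sufficed for Theorem~\ref{maintheorem}. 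On the intersection of the resulting positive-probability good event with the $FPP_1$-survival event, $FPP_1$ occupies an infinite connected component while every component of $FPP_\lambda$ lies in a dilation of a finite filled-seed cluster and is separated from the others by $FPP_1$; hence $FPP_\lambda$ dies out, which is strong survival. The case $d=2$ is immediate since the scheme only uses $p<1-p_c^{\text{site}}$ and the filled-seed tail bound, both part of the hypothesis there.

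I expect the main obstacle to be this propagation step: checking that adding the $FPP_\lambda$-confinement clause does not destroy the delicate bootstrap of the non-equilibrium feedback — in particular, that a filled-seed cluster straddling two adjacent boxes, or one that $FPP_1$ reaches only after a long detour forced by an earlier block, is still surrounded in time — and quantifying how small $\tilde\lambda_1$ must be, in terms of $d$ and $p$, for the moat-closing estimate to hold uniformly over all scales.
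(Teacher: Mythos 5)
Your proposal captures the two ingredients the paper also uses — the positive-feedback multi-scale machinery already built for Theorem~\ref{maintheorem}, and the stretched-exponential tail on filled-seed clusters coming from $p<p''_c$ — but the mechanism you use to kill the $FPP_\lambda$ components is genuinely different from the paper's, and the paper's route is cleaner.

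You propose to add a dynamic clause to the inductive good event, namely that $FPP_1$ closes a ``moat'' around each filled-seed cluster before $FPP_\lambda$ escapes a dilation of it. This is essentially the encapsulation scheme of Sidoravicius--Stauffer grafted onto the new multi-scale framework, and you are right that it would require re-establishing the bootstrap with this extra clause, handling clusters straddling boxes, and handling detours; you correctly flag this as the hard part. The paper avoids all of that. Instead it (a) augments only the \emph{static} scale-$1$ good event with $E_6(i)=\{|\mathcal{S}_1(i)|\le L_1/100\}$, a bound on the diameter of filled-seed clusters meeting a $1$-box, which is where $p<p''_c$ enters and which does not touch the passage times at all; (b) extends the parent/progenitor structure down to ``scale $0$'' (individual sites), so that every $FPP_\lambda$-occupied site can be traced to the seed component $S$ that spawned it, defining $S'$; and then (c) for any fixed $S$ chooses $k$ with $S\subset Q_k^{\text{core}}(o)$ and runs the cascading property \eqref{cask} downwards to produce nested annuli $\mathcal{O}_{k-1},\ldots,\mathcal{O}_1$ of positive-feedback boxes separating $Q_k^{\text{core}}(o)$ from infinity. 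On a positive-feedback $1$-box $Q_1(j)$, \eqref{cas1} says $\mathcal{C}_1^-(j)$ is entirely $FPP_1$, and $E_6$ guarantees that any $S'$ entering the core of $Q_1(j)$ cannot hide inside a filled-seed cluster and must hit $\mathcal{C}_1^-(j)$; this contradiction shows $S'$ is finite. No moat-closing time estimate, and no new dynamic clause in the induction, is needed: Lemma~\ref{lem:cascading} already supplies the blocking annuli. The upshot is that the threshold on $\lambda$ is exactly the one from Theorem~\ref{maintheorem} rather than a potentially smaller one as you suggest, since the only new requirement ($E_6$) is a purely geometric condition on the seed configuration.

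So: your proposal is not wrong, but it takes a substantially harder road. If you pursue it you would essentially be re-proving a quantitative encapsulation lemma inside the bootstrap, whereas the paper shows that once \eqref{cask} holds, a small static strengthening of the scale-$1$ event plus the scale-$0$ progenitor bookkeeping is enough.
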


We relate our results to known results for FPPHE in Section~\ref{sec:known_results}, after we discuss our novel approach to multi-scale analysis.

\subsection{Multi-scale analysis with non-equilibrium feedback}
\label{sec:MAwNEF}

The development of multi-scale analysis has arisen as a means of understanding the macroscopic behaviour of complex systems and has proven to be a powerful tool across many mathematical fields. 
A standard multi-scale construction usually goes as follows. 
The underlying space is covered by boxes of a fixed size that are labelled as either \emph{good} or \emph{bad} according to (local) events that are measurable with respect to that box. 
One would choose such events so that large clusters of good boxes 
imply some desired behaviour at a mesoscopic or macroscopic scale. 
In some cases this single-scale renormalization procedure will suffice but often one will need to define boxes at infinitely many scales of increasing size 
and define good and bad appropriately at higher scales. 
The main challenge of multi-scale constructions is finding a sensible way of defining good and bad boxes so that the desired properties of the system can be captured while good boxes occur 
with sufficiently high probability at all scales. If one is able to do this, then this robust framework is able to handle systems with strong correlations that otherwise are not tractable to analysis. 
Examples of such an approach range from random interlacements and dependent percolation~\cite{sznitman2010,peres2013,candellero2018} to interacting particle systems~\cite{kesten2003branching,kesten2005spread,kesten2006,gracar2019}. 

A common feature of systems that are amenable to a standard multi-scale analysis as described above is that they are in \emph{equilibrium}, 
thus for example it is possible to assess how good or bad a configuration is at any given moment in time and space without having to observe global information about the system. 
Another important feature is \emph{monotonicity}, which allows for sprinkling arguments to be developed in order to ``decouple'' space-time boxes that are sufficiently far from one another. 
Hence if one can find a suitable definition of good and box boxes, and can control how they are distributed, 
then a standard multi-scale analysis can be potent in understanding the macroscopic structure of the system in question. 

The main contribution of this paper is to develop a framework, which we refer to as \emph{multi-scale analysis with non-equilibirum feedback}, 
that we believe can be used to perform multi-scale analysis on some non-equilibrium systems. 
As with standard multi-scale analysis, our framework is not a ``black box'' that is ready to be applied, but rather an approach (or a strategy) to be followed and tailored to each particular problem.
Here we provide a heuristic and overall explanation on the main ideas behind our framework, and we give a more detailed account later in Section~\ref{sec_new_MSA}, where we develop it to analyze FPPHE.

For concreteness, henceforth we consider the case of FPPHE, but the reasoning below could be adapted to a more general non-equilibrium process. 
We would like to characterise some desired macroscopic behaviour of FPPHE such as survival of $FPP_1$. 
We could perform a standard multi-scale analysis by constructing good and bad boxes at infinitely many scales so that a good box implies that the passage times and the distribution of the seeds inside the 
box is such that if there were only $FPP_1$ outside the box then $FPP_1$ could ``spread well'' throughout the box. 
Such events could be defined in a local way, but the problem lies in the part ``if there were only $FPP_1$ outside the box''. What if it is $FPP_\lambda$ who enters the box first? 
Even worse, what if $FPP_\lambda$ growing from outside the box ends up taking the whole boundary of the box? 

Our approach is to distinguish between two classes of good boxes, which we refer to as being of \emph{positive feedback} or \emph{negative feedback}. For this discussion, we could 
simplistically say that 
a box of positive feedback is a box for which $FPP_1$ arrives to it first, before $FPP_\lambda$ does\footnote{We will need to develop a much more subtle notion of positive feedback, but this simple version is enough to give the 
overall idea behind our approach.}. Note that whether a box is of positive or negative feedback is an event that is \emph{not local}, 
and depends on the whole evolution of the process up to the time it reaches the box. 
Considering also the fact that FPPHE is not a monotone process, 
controlling the probability that a box has positive or negative feedback seems to be rather difficult. 
But the way we proceed is to analyze positive and negative feedback boxes in a completely different manner; in particular, 
we will \emph{not check} whether boxes have positive or negative feedback, 
instead we will use this information and assess its consequences. This motivated the choice of the 
word \emph{feedback}, as we regard this information as a feedback that the box receives from the whole system (that is, it is regarded as an information that is given to us, 
instead of an information whose validity we check or whose probability we estimate).

The overall idea is to tune the definition of positive and negative feedback so that the following two properties are obtained: 
\begingroup
\renewcommand{\theenumi}{\roman{enumi}}
\renewcommand{\labelenumi}{\theenumi.}
\begin{enumerate}
   \item\label{it:ppos} a positive feedback box implies the well spread of $FPP_1$ inside that box (that is, it implies the desired macroscopic behavior of the process),
   \item\label{it:pneg} a box that has negative feedback can be associated to a bad box (called its progenitor) in a way that we can control how far away the progenitor of a negative feedback box is.
\end{enumerate}
\endgroup
Thus since bad boxes (via a standard multi-scale analysis) can be shown to be rare,~\eqref{it:pneg} allows us to control the influence and location of boxes with negative feedback. 
Note that~\eqref{it:pneg} allows us to show that negative feedback boxes are rare by controlling bad boxes only, without having to estimate the probability that a box is of negative feedback. 

To give a better idea regarding~\eqref{it:pneg}, the progenitor of a negative feedback box $Q$ is the box that \emph{is responsible} for $Q$ ending up having a negative feedback; in the case of FPPHE, one could imagine that 
the progenitor is the bad box from which a seed of $FPP_\lambda$ is activated and grew all the way until entering $Q$, doing so before $FPP_1$.
We could generalize~\eqref{it:pneg} so that a negative feedback box is associated to more than one progenitors, as long as we can control the region inside which such progenitors may be found, but in the present paper it is 
enough to have a single progenitor for each negative feedback box.
We also remark that bad boxes do not need to be classified as a positive feedback or negative feedback. So whenever we refer to a positive or negative
feedback box, we assume that the box is good. 

Our approach is then naturally split into two parts. 
The first part consists of a standard multi-scale analysis where we control how the process will behave inside a box given that the behavior of the process outside the box is nice enough. 
Then, in the second part we define positive and negative feedback, and derive properties~\eqref{it:ppos} and~\eqref{it:pneg} above. 
We divided the bulk of our proof for FPPHE in two sections exactly to better highlight the role of each of these parts, 
with the first part (the standard multi-scale analysis) being developed in Section~\ref{sec_standard_MSA}
and the second part (the actual multi-scale analysis with non-equilibrium feedback) being developed in Section~\ref{sec_new_MSA}.

We believe that the idea behind the multi-scale analysis with positive feedback is robust enough to be applied to other systems with non-equilibrium dynamics. 
In particular, systems that could be suitable to such type of approach are the ones which 
either contains a stationary component (like the passage times of FPPHE, which are i.i.d., 
or an additional example are systems with a suitable graphical representation involving i.i.d.\ random variables) 
or can be approximated by a stationary process. Such a feature could be used in a two-part proof, as described above, 
with a standard multi-scale analysis being constructed on the 
stationary component of the process and a positive/negative feedback mechanism being developed to leverage the multi-scale analysis to control the aspects of the process that is 
not in equilibrium.

\subsection{Known results for FPPHE}
\label{sec:known_results}


The first result for FPPHE was proving the existence of a strong survival phase for $d\geq2$ by Sidoravicius and Stauffer \cite[Theorem 1.3]{sidoravicius2019multi}. 
More precisely, for $d\geq2$ they proved that for any $\lambda\in(0,1)$, there exists $p_0=p_0(\lambda,d)>0$ such that if $p\in(0,p_0)$, then there is strong survival. 
The difference between Theorem~\ref{thm:strongsurvival} and the strong survival phase proven in \cite{sidoravicius2019multi} is that we fix $p$ first, and then take $\lambda$ small enough, 
whereas in \cite{sidoravicius2019multi} it is $\lambda$ that is fixed first and $p$ that is taken small enough. As noticed in Figure~\ref{fig:simul}, such a change makes FPPHE behave rather differently.

The proof in \cite{sidoravicius2019multi} relied on a multi-scale argument that handled the long-range dependencies in FPPHE in the following manner. 
One can restrict the spread of $FPP_{\lambda}$ from a seed to a (possibly arbitrarily large) region of $\mathbb{Z}^d$ by showing that $FPP_1$ is able to grow around (encapsulating) $FPP_\lambda$ inside this region. 
Outside of this ``bad region'', $FPP_{\lambda}$ originating from this seed no longer affects how FPPHE evolves. 
If such bad region does not intersect bad regions from other seeds (a fact that ends up being obtained by setting $p$ small enough), 
then a multi-scale argument that accounts for all bad regions implies that 
the sites that are not contained in any bad region percolate in $\Z^d$. This implies the strong survival phase. 

In the coexistence phase, however, $FPP_1$ cannot encapsulate $FPP_{\lambda}$, 
and in particular, there is a seed that gives rise to an infinite component of $FPP_{\lambda}$ that $FPP_1$ must survive against. 
Hence we had to find a different way to control how much effect a seed may have in the growth of $FPP_\lambda$.
Drawing a parallel between the two works, we notice that~\cite{sidoravicius2019multi} develops a multi-scale analysis 
that is particularly tailored to 
the analysis of that specific question and also crucially relies on the fact that outside of bad regions one only finds $FPP_1$. 
This last property allowed for an essentially standard multi-scale analysis to be developed, as local events alone were enough to guarantee the encapsulation of 
$FPP_\lambda$. In particular, the technique in~\cite{sidoravicius2019multi} is ineffective to handle the more 
challenging coexistence phase. 
On the other hand, not only our proof can handle the coexistence regime, but it also consists on the development of a novel approach to multi-scale analysis, 
which gives a mechanism that we believe 
is suitable to be adapted to other questions and models. 
In particular, in Section~\ref{sec:conclusion}, we briefly explain how our approach can be tuned to give a cleaner proof of the result in~\cite{sidoravicius2019multi}.

We end this section with further remarks about FPPHE.
It is intuitive that if $\lambda\geq1$, then $FPP_1$ dies out almost surely. 
In particular, adapting a result by van den Berg and Kesten \cite{van1993inequalities} regarding strict inequalities for first passage percolation (in particular, the extension to the limit shape given in 
\cite[Proposition 6.4]{sidoravicius2019multi}), 
we obtain that there exists an $\varepsilon>0$ which does not depend on $\lambda$ (just on $p$ and $d$) 
so that $FPP_1$ spreads in a way that is strictly slower than first passage percolation of rate $1-\varepsilon$. 
This is simply because $FPP_1$ must at least deviate from a density of seeds. Then if $\lambda >1-\varepsilon$, some seed of $FPP_{\lambda}$ will manage to grow fast enough to eventually encapsulate $FPP_1$ inside a finite region. 
We refer to \cite[Theorem 4.1]{finnthesis} for details.
The known behaviour for FPPHE on $\mathbb{Z}^d$ for $d\geq3$ can be seen in Figure~\ref{fig:phaseknown}.

FPPHE can be readily generalised to any infinite connected graph. 
For example, if $G$ is a rooted tree and $p<1-p_c^{\text{site}}(G)$, then coexistence is equivalent to the root being contained in an infinite cluster of non-seed sites, 
where the root has the same role as the origin on $\mathbb{Z}^d$. 
Such cases are considered trivial as coexistence is an artefact of a more classically understood process. 
The first non-trivial coexistence phase was established in Candellero and Stauffer \cite[Corollary 1.3]{candellero2018coexistence} when $G$ is a vertex-transitive, hyperbolic and non-amenable graph. 
First in \cite[Theorem 1.1]{candellero2018coexistence}, they prove that for \emph{any} $\lambda>0$, there exists a $p_0=p_0(G,\lambda)$ such that if $p\in(0,p_0)$, then the $FPP_1$ process survives with positive probability. 
Though this is analogous to the result in \cite{sidoravicius2019multi} for $\mathbb{Z}^d$, there is a difference in behaviour as in hyperbolic and non-amenable graphs $FPP_1$ can survive even if $\lambda\geq1$. 
Secondly in \cite[Theorem 1.2]{candellero2018coexistence}, they prove that for \emph{any} $p\in(0,1)$, and \emph{any} $\lambda>0$, the $FPP_{\lambda}$ process survives almost surely. 
In other words, the strong survival phase does not occur for any choice of parameters. 
We remark that a result analogous to our theorem on strong survival (Theorem~\ref{thm:strongsurvival}), where $p$ is fixed first and then $\lambda$ is set small enough, 
has not been established in~\cite{candellero2018coexistence} for hyperbolic, nonamenable
graphs. The main issue being that such a case requires a more delicate analysis which would involve a detailed control of the geometry of non-seeds.

\subsection{Related growth models and the search for coexistence}
\label{sec:related_models}

The study of random growth processes with competition is a classical area of research with particular interest in the circumstances in which both processes can coexist with one another, 
especially on $\mathbb{Z}^d$. 
A notable example is the \emph{two-type Richardson model}, in which two types (which we call $FPP_1$ and $FPP_\lambda$) grow throughout $\mathbb{Z}^d$ as follows: $FPP_1$ starts from the origin and spreads throughtout the 
edges of $\mathbb{Z}^d$ at rate $1$ (as a first passage percolation process), whereas $FPP_\lambda$ starts from a neighbour of the origin and spreads in the same way as $FPP_1$, but at a rate $\lambda$. 
Whenever a site is occupied by one of the processes, it remains so forever. 
Unlike the model FPPHE we study in this paper, in the two-type Richardson model both $FPP_1$ and $FPP_\lambda$ start spreading already from time $0$. 

It is conjectured that when $d\geq2$ coexistence in the two-type Richardson model can occur with positive probability if and only if $\lambda=1$ (that is, both processes spread at the same rate). 
Coexistence for $\lambda=1$ was established on $\mathbb{Z}^2$ by 
H\"aggstr\"om and Pemantle \cite{haggstrom1998first} before being generalised to higher dimensions by Garet and Marchand \cite{garet2005coexistence} and Hoffman \cite{hoffman2005coexistence}. 
The other side of the conjecture remains open, but H\"aggstr\"om and Pemantle \cite{haggstrom2000absence} proved that 
the set of values for $\lambda$ that coexistence can occur with positive probability is at most a countable subset of $(0,\infty)$. 
At first, it may sound incredible that this does not imply that coexistence can only occur for $\lambda=1$, but the issue is that the event of coexistence is not monotone in $\lambda$. 
Nonetheless, the two-type Richardson model is a monotone process in the sense that there is a straightforward coupling under which if $\lambda$ is increased, then the set of sites occupied by $FPP_{\lambda}$ can only increase. 
But the event of coexistence is more subtle since increasing (resp., decreasing) $\lambda$ could cause the transition from a regime of coexistence to a regime where $FPP_\lambda$ (resp., $FPP_1$) is the only one to survive. 
Recently, the conjecture for the two-type Richardson model was solved for the half-plane by Ahlberg, Deijfen and Hoffman \cite{ahlberg2018two}, exploiting the planarity of $\mathbb{Z}^2$ and the boundary of the half-plane.
We remark that, as we explained previously, FPPHE is \emph{not} monotone at all, which 
highlights the challenges that the intrinsic strong dependences in FPPHE can give rise to. 

Garet and Marchand \cite{garet2008first} proved that in the two-type Richardson model with $\lambda\neq1$, both processes cannot simultaneously occupy a positive density of sites almost surely. 
In fact, in \cite{garet2008first} they extend this result to a broad class of passage times beyond the context of the two-type Richardson model. This is a stark contrast to the behaviour of FPPHE, 
as in Corollary~\ref{maincorollary} we have that FPPHE can have both processes occupying a positive density of sites and coexisting with positive probability.

In a somewhat different context, Deijfen, Hirscher and Lopes \cite{deijfen2019competing} studied a two-type variant of a particle system known as the \emph{frog model}. 
In this model, we start two \emph{active} particles from the origin of $\mathbb{Z}^d$, one particle of type $1$ and the other of type $\lambda$. In addition, at each site that is not 
the origin, we place an independent random number of \emph{sleeping} particles. Then, from time 0 active particles move as independent simple random walks. with type $1$ particles jumping at rate $1$ and type $\lambda$ particles 
jumping at rate $\lambda$. 
Sleeping particles, in turn, do not jump. However, they become active whenever a particle jumps on their site, and at that time acquire the type of the particle that jumped onto them. 
A central question is to understand which type manages to conquer an infinite number of particles, and in particular whether coexistence is possible (that is, whether both types can simultaneously conquer an infinite number 
of particles).

In \cite{deijfen2019competing}, it is shown (for a discrete-time variant of the aforementioned process) that coexistence is possible if the two types have equal jump rates (i.e., $\lambda=1$). 
They conjecture that when the jump probabilities are different, coexistence is possible only if the initial distribution of the sleeping particles is especially heavy-tailed.
We note that the two-type frog model is also a monotone process.
Many other one-type interacting particle systems could also be generalized to two-type competition versions, and references for these can be found in \cite{deijfen2019competing}.

\subsection{Outline of paper}

In Section \ref{sec_preliminaries} we provide a rigorous construction of FPPHE and fix some useful notation in regards to first passage percolation. 
In Section \ref{sec_standard_MSA} we perform a standard multi-scale analysis on FPPHE, as we described in Section~\ref{sec:MAwNEF}.
In Section \ref{sec_new_MSA} we introduce multi-scale analysis with non-equilibrium feedback and derive all the desired properties.
In Section \ref{sec:Proof_Thm1} we establish Theorem~\ref{maintheorem} and Theorem~\ref{thm:strongsurvival} using the results of the previous sections. In Section~\ref{sec:conclusion} we conclude with some open questions and final remarks.

\section{Preliminaries}
\label{sec_preliminaries}

In this section we give a formal construction of FPPHE on $\mathbb{Z}^d$. Initially we place seeds according to i.i.d.\ Bernoulli random variables of parameter $p\in(0,1)$ on all sites except the origin. Fix $\lambda>0$. To define the passage times for $FPP_1$ and $FPP_{\lambda}$ we construct two sets of passage times $\{t_e\}_{e\in E}$ and $\{t^{\lambda}_{e}\}_{e\in E}$, where $E$ is the edge set of $\mathbb{Z}^d$, such that $\{t_e\}_{e\in E}$ (resp.\ $\{t^{\lambda}_e\}_{e\in E}$) is a collection of i.i.d.\ exponentially distributed random variables of rate 1 (resp.\ $\lambda$).

At time $t=0$ all seeds are inactive and $FPP_1$ only occupies the origin. The dynamics for $t\geq0$ are as follows. If $x$ is first occupied by the $FPP_1$ process at time $t$ and $y$ is a neighbour of $x$ connected by an edge $e_{x,y}$, then $FPP_1$ attempts to occupy $y$ at time $t+t_{e_{x,y}}$. The occupation is successful if $y$ is not occupied by $FPP_1$ or $FPP_{\lambda}$ before time $t+t_{e_{x,y}}$ and $y$ is not a seed. If $y$ is occupied by $FPP_1$ or $FPP_{\lambda}$ by time $t+t_{e_{x,y}}$, then the occupation from $x$ is unsuccessful. If $y$ is an inactive seed for all time before time $t+t_{e_{x,y}}$, then $y$ becomes an active seed at time $t+t_{e_{x,y}}$, meaning that it is occupied by the $FPP_{\lambda}$ process at time $t+t_{e_{x,y}}$. The dynamics if $x$ was instead occupied by the $FPP_{\lambda}$ process are precisely the same except with passage times given by $\{t_e^{\lambda}\}_{e\in E}$.

\begin{remark}
In our analysis we never appeal to the memoryless property of exponentials and so more general passage times with reasonable conditions could be considered instead. For example, distributions with exponential tails would satisfy our analysis. We choose to restrict our attention to exponential passage times for ease of exposition and to avoid cumbersome notation.
\end{remark}

To remove possible ambiguities when speaking about the occupancy of sites by $FPP_1$ and $FPP_{\lambda}$, we introduce the following notation. We let $\eta_t$ be the \emph{configuration} for FPPHE on $\mathbb{Z}^d$ at time $t$, meaning that $\eta_t:\mathbb{Z}^d\to\{-1,0,1,2\}$ where $\{-1,0,1,2\}$ is a set of labels giving the state of occupancy of a site in the following manner. For a fixed site $x\in\mathbb{Z}^d$ and time $t\geq0$, we have:
\begin{itemize}
\item if $\eta_t(x)=-1$, then $x$ is a non-activated seed at time $t$,
\item if $\eta_t(x)=0$, then $x$ is not a seed and is not occupied by $FPP_1$ or $FPP_{\lambda}$ at time $t$,
\item if $\eta_t(x)=1$, then $x$ is occupied by $FPP_1$ at time $t$,
\item if $\eta_t(x)=2$, then $x$ is either an activated seed or a non-seed occupied by $FPP_{\lambda}$ at time $t$.
\end{itemize}

Using the above notation $\eta_0$ is defined as follows. We set $\eta_0(0)=1$ and for $x\in Z^d\setminus\{0\}$, with probability $p$ we have $\eta_0(x)=-1$, otherwise $\eta_0(x)=0$.

From the set of passage times $\{t_e\}_{e\in E}$ and $\{t^{\lambda}_e\}_{e\in E}$ given in the construction of FPPHE, it will be useful to recall some standard definitions in first passage percolation. The interested reader can find an introduction to first passage percolation in the monograph \cite{auffinger201750} by Auffinger, Damron and Hanson.

Let $\gamma=(v_0,v_1,\ldots,v_{n-1},v_n)$ be a finite simple path of sites on $\mathbb{Z}^d$, meaning that $v_i\neq v_j$ if $i\neq j$ and $\|v_i-v_{i-1}\|_1=1$ for all $i\in\{1,2,\ldots,n\}$. We define the \emph{passage time of} $\gamma$ \emph{with respect to} $\{t_e\}_{e\in E}$ as the random variable
\begin{align*}
T_1(\gamma) = \sum_{i=1}^nt_{e_i}
\end{align*}
where $e_i$ is the edge between $v_{i-1}$ and $v_i$. Similarly, we define the \emph{passage time of} $\gamma$ \emph{with respect to} $\{t^{\lambda}_e\}_{e\in E}$ as the random variable
\begin{align*}
T_{\lambda}(\gamma) = \sum_{i=1}^nt^{\lambda}_{e_i}.
\end{align*}
For $x,y\in\mathbb{Z}^d$, let $\Gamma(x\to y)$ be the set of all finite simple paths from $x$ to $y$. We define the \emph{passage time between }$x$ \emph{and} $y$ as the random variable
\begin{align*}
    T_{\#}\left(x\to y\right)=\inf_{\gamma\in\Gamma(x\to y)}T_{\#}\left(\gamma\right),
\end{align*}
where the label $\#\in\{1,\lambda\}$ indicates which set of passage times are being referred to. If $U$ is a subgraph of $\mathbb{Z}^d$, we let $T_{\#}(x\to y; U)$ be the infimum of all passage times over paths that do not exit $U$, so that 
\begin{align*}
    T_{\#}\left(x\to y; U\right)=\inf_{\gamma\in\Gamma(x\to y; U)}T_{\#}\left(\gamma\right),
\end{align*}
where $\Gamma(x\to y; U)$ is the set of all finite simple paths from $x$ to $y$ that do not exit $U$. 

\section{Standard multi-scale analysis}
\label{sec_standard_MSA}

In order to better explain the non-equilibrium feedback idea we introduce, we split our multi-scale analysis in the next two sections. In this section, we describe the part of our analysis that follows a more standard (``in equilibrium'') multi-scale analysis approach, which will be based only on \emph{local} events on the sets of passage times $\{t_e\}_{e\in E}$ and $\{t^{\lambda}_e\}_{e\in E}$. 

Let $L_1$ be some large integer to be set later. For $k\geq2$, set
\begin{equation}
\label{eq:L_k_def}
L_k=k^2L_{k-1}^d.
\end{equation}
Each $L_k$ gives us the side lengths of the $k$-boxes we define below. We refer to $k$ as the \emph{scale}.

At each scale $k\geq1$ we partition $\mathbb{Z}^d$ into boxes of side-length $L_k/3$, producing a collection of disjoint boxes that we call $k$\emph{-cores}:
$$
\left\{Q_k^{\text{core}}\left(i\right)\right\}_{i\in\mathbb{Z}^d} \text{ with } Q_k^{\text{core}}\left(i\right) = \left(L_k/3\right)i + \left[-L_k/6,L_k/6\right)^d.
$$
For each $k\geq1$, define $k$\emph{-boxes} as a collection of overlapping boxes of length $L_k$ which are centred on the $k$-cores by
$$
    \left\{Q_k\left(i\right)\right\}_{i\in\mathbb{Z}^d} \text{ with } Q_k\left(i\right) = \left(L_k/3\right)i + \left[-L_k/2,L_k/2\right]^d. 
$$

\subsection{Good boxes at scale 1}
\label{sec:good_1_boxes}

The first step in this multi-scale analysis is defining local events to $1$-boxes that will distinguish between \emph{good} and \emph{bad} $1$-boxes. Roughly speaking, we would like a good $1$-box to imply that $FPP_1$ has the opportunity to spread with fast passage times throughout the $1$-box while the spread of $FPP_{\lambda}$ is inhibited. Moreover, we would like to prove that good $1$-boxes occur with high probability for a suitably large choice of $L_1$ and small choice of $\lambda$. In this section we make a rigorous idea of these notions.

One condition for a $1$-box to be good is that the non-seed sites \emph{percolate} well inside the $1$-box, giving $FPP_1$ the opportunity to spread to nearby $1$-boxes. To define this condition rigorously, let $\mathcal{G}$ be the random sub-graph of $\mathbb{Z}^d$ induced by removing all $FPP_{\lambda}$ seeds from $\mathbb{Z}^d$, noting that we drop the dependence on $p$ for ease of notation. Let $d_{\mathcal{G}}(\cdot,\cdot)$ be the natural graph metric induced by $\mathcal{G}$ and $V(\mathcal{G})$ be the vertex set of $\mathcal{G}$. Fix $i\in\mathbb{Z}^d$ and consider the 1-box $Q_1(i)$. Let $\partial Q_1(i)$ be the set of sites of $Q_1(i)$ that have a neighbour not contained in $Q_1(i)$. Enumerate the clusters of $V(\mathcal{G})\cap Q_1(i)$ where we remove edges between sites in $\partial Q_1(i)$ as
$$
\mathcal{C}_1(i),\mathcal{C}_2(i),\ldots,\mathcal{C}_{n_i}(i),
$$
where $n_i$ is the number of clusters and $|\mathcal{C}_1(i)|\geq|\mathcal{C}_2(i)|\geq\ldots\geq|\mathcal{C}_{n_i}(i)|$. Note that this ordering may not be unique, but this fact will not alter our arguments. The removal of edges between sites in $\partial Q_1(i)$ is a non-restrictive condition that will simplify arguments later. 
We let $\mathcal{C}^{-}_1(i)=\mathcal{C}_1(i)\setminus \partial Q_1(i)$ denote the cluster $\mathcal{C}_1(i)$ with sites in $\partial Q_1(i)$ removed.

Recall we assume $p<1-p_c^{\text{site}}$ so that the non-seed sites are supercritical. With this assumption in mind, we have
$$
\theta(1-p)=\mathbb{P}_{1-p}(o\text{ is contained in an infinite cluster of }\mathcal{G})>0,
$$
where $\mathbb{P}_{1-p}$ is the measure given by placements of seeds with parameter $p$ in the construction of FPPHE and we write $o$ for the origin of $\mathbb{Z}^d$. We write $1-p$ instead of $p$ to emphasise that it is the non-seed sites that we are considering and to remain consistent with standard notation in percolation theory. 

For a fixed $\varepsilon>0$, define the event
\begin{align}
E_1(i) = \left\{\begin{array}{r}
\mbox{for all }i'\mbox{ with }Q_1^{\text{core}}(i')\subset Q_1(i)\text{ we have that }\mathcal{C}_1(i)\cap Q_1^{\text{core}}(i')\text{ touches}\\
\text{all faces of }Q_1^{\text{core}}(i') \text{ and }
|\mathcal{C}_1(i)\cap Q_1^{\text{core}}(i')|\geqslant (1-\varepsilon)\theta(1-p)(L_1/3)^d\\
\end{array} \right\} \label{def:E_1}
\end{align}
where the dependence on $\varepsilon$ is left as implicit for ease of notation. Roughly speaking, the event $E_1(i)$ guarantees that within each $1$-core contained in $Q_1(i)$, the large component of non-seeds in $Q_1(i)$ percolates well within each $1$-core. Deuschel and Pisztora \cite[Theorem 1.1]{deuschel1996surface} proved large deviation estimates for such events, and in particular, there exists a constant $b_1>0$ such that
$$
\mathbb{P}(E_1(i))>1-\exp\left(-b_1L_1^{d-1}\right).
$$
It could be the case that the large components of non-seeds within each core are not contained in the same cluster within a $1$-box. To handle this, we consider the event
$$
E_2(i) = \left\{|\mathcal{C}_2(i)|\leq \log^2 |Q_1(i)|\right\},
$$
which is the event that components of non-seeds that are not in the largest component in $Q_1(i)$ are logarithmically small. Penrose and Pisztora \cite[Theorem 5]{penrose1996large} provide large deviation estimates for such events and give us that there exists a constant $b_2>0$ such that
$$ 
\mathbb{P}(E_2(i))>1-\exp\left(-b_2\log^2 L_1\right).
$$
If $Q_1(i)$ and $Q_1(j)$ are neighbours in the sense that $i$ and $j$ are neighbours on $\mathbb{Z}^d$, there is an overlap of side length at least $L_k/3$. If $E_1(i), E_1(j), E_2(i)$ and $E_2(j)$ all hold, then for large enough $L_1$, the intersection of the large components $\mathcal{C}_1(i)\cap\mathcal{C}_1(j)$ of the respective $1$-boxes must be non-empty, as the smaller clusters are only logarithmically small.

The event $E_1(i)$ only tells us that the non-seeds percolate well in a $1$-box but it could be the case that the paths of non-seeds within the box are superlinear in the graph distance of the sites which would greatly inhibit the spread of $FPP_1$. To remedy this, we recall the notion of the \emph{chemical distance} between two sites on a percolated graph, which is their graph distance induced by the graph under percolation. Let $\mathcal{G}(i)$ be $\mathcal{G}$ restricted to $Q_1(i)$ with edges between sites within $\partial Q_1(i)$ not included. Set $d_{\mathcal{G}(i)}(\cdot,\cdot)$ to be the natural graph distance metric on $\mathcal{G}(i)$.

For a constant $c_1\geq1$, define the event
$$
E_3(i) = \left\{\forall x,y\in\mathcal{C}_1(i), d_{\mathcal{G}(i)}(x,y)< c_1\max\left\{\|x-y\|_1,\log^2 L_1\right\}\right\},
$$
that states the chemical distance between two sites in the large component of non-seed sites within $Q_1(i)$ is linear with a logarithmic term. For $E_3(i)$ we can appeal to large deviation estimates for the chemical distance for supercritical Bernoulli percolation by Antal and Pisztora \cite[Corollary 1.3]{antal1996chemical}. So long as $c_1$ is large enough with respect to $p$ and $d$, as given in \cite[Theorem 1.1]{antal1996chemical}, we may deduce that for all large enough choices of $L_1$ that
$$
\mathbb{P}\left(E_3(i)\right)>1-\exp\left(-c_1\log^2 L_1\right). 
$$
	We observe two technical points to justify the above bound.
	First, the main results in Antal and Pisztora \cite{antal1996chemical} are stated for \emph{bond} percolation, but also work for site percolation; see the remark following \cite[Theorem 1.2]{antal1996chemical}.
	Second, the event $E_3(i)$ concerns paths restricted to the $1$-box $Q_1(i)$, while the results in Antal and Pisztora \cite{antal1996chemical} consider unrestricted paths.
	However, the same proof in the truncated case follows by performing the coarse graining scheme of Antal and Pisztora, say with sub-boxes of side-length $N$ partitioning $Q_1(i)$, and then setting $c_1$ and $L_1$ large enough with respect to $N$.
	More precisely, each sub-box is classified as either good or bad according to the presence of a unique large component of non-seeds within the sub-box (similar to our events $E_1$ and $E_2$).
	As the probability that a sub-box is good can be made large, we are in the domain of highly supercritical percolation.
	Thus to construct a path from $x$ to $y$ that are both in the large component of $Q_1(i)$, it suffices to fix a deterministic path of sub-boxes from the sub-box containing $x$ to the sub-box containing $y$, and then go around bad clusters of sub-boxes using an argument of Fontes and Newman \cite{fontes1993first} as done in \cite{antal1996chemical}.

For a large constant $c_2>1$, define the event 
$$
E_4(i) = \left\{\forall x,y\in\mathcal{C}_1(i), T_1(x\to y; \mathcal{G}(i))<c_2\max\left\{\|x-y\|_1,\log^2 L_1\right\}\right\},
$$
that states the passage times between two non-seed sites in the large component of $Q_1(i)$ are linear in their graph distance. By considering Chernoff bounds for sum of exponentials, then one can deduce that $E_4(i)$ holds with high probability for a large enough choice of $c_2$ through similar arguments made for $E_3(i)$. 

Let $\mathcal{E}(i)$ be the set of edges of $\mathbb{Z}^d$ where both endpoints are contained in $Q_1(i)$ and $E_5(i)$ be the event that all edges in  $\mathcal{E}(i)$ have passage time at least $1/\sqrt{\lambda}$ with respect to $\{t_e^{\lambda}\}_{e\in E}$, so that
$$
E_5(i) = \left\{\forall e\in \mathcal{E}(i), t^{\lambda}_{e}\geq\tfrac{1}{\sqrt{\lambda}}\right\}.
$$
Recall that if $X$ is an exponentially distributed random variable of rate $\mu$, then $ \mathbb{P}(X<x)=1-e^{-x \mu}$ for any $x\geq0$. Hence by taking the union bound over each edge $e\in \mathcal{E}(i)$, we have
\begin{equation}
\label{eq:E_5_prob}
\mathbb{P}\left(E_5(i)^\text{c}\right)\leqslant \left(1-e^{-\sqrt{\lambda}}\right)|\mathcal{E}(i)| \leqslant \sqrt{\lambda}|\mathcal{E}(i)| 
\end{equation}
where $|\cdot|$ denotes cardinality and we use the inequality $e^{-x}\geq 1-x$ for all $x\in\mathbb{R}$. The right-hand side of \eqref{eq:E_5_prob} may be made arbitrarily small by setting $\lambda$ small enough.


\begin{definition}
(Good box at scale 1) Let $i\in\mathbb{Z}^d$ and fix any $\varepsilon>0$ and large enough constants $c_1, c_2>1$. Define the event
$$
G_1(i)= \bigcap_{j=1}^5 E_j(i).
$$
If $G_1(i)$ holds, then we define $Q_1(i)$ to be \emph{good}. Otherwise we define $Q_1(i)$ to be \emph{bad}.
\end{definition}

	Note that a 1-box is measurable with respect to events concerning the sites and edges inside the 1-box,  and thus disjoint 1-boxes being good are independent events.
	The probability of a $1$-box being good is invariant under translations and so we may define $\rho_1$ to be the probability that an arbitrary $1$-box is bad by setting
$$
\rho_1 = 1 - \mathbb{P}(G_1(o)).
$$

\begin{lemma}
\label{lem:prob_good_1_box}
For any $\varepsilon>0$, suitably large choices of $c_1,c_2>1$ and $B>0$, by setting $L_1$ large enough and then $\lambda$ small enough, we have
$$
\rho_1 <  L_1^{-B}.
$$
\end{lemma}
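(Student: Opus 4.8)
The plan is to union-bound over the five events $E_1(i),\dots,E_5(i)$ that define a good $1$-box. By the translation invariance of the construction it suffices to work with $i=o$. Writing $\rho_1 = \mathbb{P}\big(\bigcup_{j=1}^5 E_j(o)^{\mathrm c}\big) \le \sum_{j=1}^5 \mathbb{P}(E_j(o)^{\mathrm c})$, I would substitute the five large-deviation estimates already quoted in the text:
\begin{align*}
\mathbb{P}(E_1(o)^{\mathrm c}) &\le \exp(-b_1 L_1^{d-1}), &
\mathbb{P}(E_2(o)^{\mathrm c}) &\le \exp(-b_2 \log^2 L_1), \\
\mathbb{P}(E_3(o)^{\mathrm c}) &\le \exp(-c_1 \log^2 L_1), &
\mathbb{P}(E_4(o)^{\mathrm c}) &\le \exp(-c_4 \log^2 L_1),
\end{align*}
for suitable constants (with $c_4$ playing for $E_4$ the role $c_1$ plays for $E_3$), together with $\mathbb{P}(E_5(o)^{\mathrm c}) \le \sqrt{\lambda}\,|\mathcal{E}(o)|$ from \eqref{eq:E_5_prob}. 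Here $|\mathcal{E}(o)| \le d(L_1+1)^d$, a polynomial in $L_1$.

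Next I would treat the two groups of terms separately. The first four terms are each of the form $\exp(-c \log^2 L_1)$ for a positive constant $c$ (the $E_1$ term being even smaller, since $L_1^{d-1}$ dominates $\log^2 L_1$ for $d \ge 2$); since $\log^2 L_1$ grows faster than $B \log L_1$, for every fixed target exponent $B$ and every fixed choice of the constants $c_1, c_2$ (hence $c_4$), there is $L_1^\star = L_1^\star(B, c_1, c_2, d, p)$ such that for all $L_1 \ge L_1^\star$ each of these four probabilities is at most $\tfrac{1}{10} L_1^{-B}$. This pins down how large $L_1$ must be. Having fixed such an $L_1$, I then choose $\lambda$ small: since $|\mathcal{E}(o)|$ is now a fixed number, $\mathbb{P}(E_5(o)^{\mathrm c}) \le \sqrt{\lambda}\,|\mathcal{E}(o)| \le \tfrac{1}{2} L_1^{-B}$ as soon as $\lambda < \tilde\lambda_0$ for some $\tilde\lambda_0 = \tilde\lambda_0(L_1, B, d)$. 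Summing, $\rho_1 \le 4 \cdot \tfrac{1}{10} L_1^{-B} + \tfrac12 L_1^{-B} < L_1^{-B}$, as required. Note the order of quantifiers matters and matches the statement: $\varepsilon, c_1, c_2, B$ are given first, then $L_1$ is taken large, then $\lambda$ small.

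There is essentially no deep obstacle here — the lemma is a bookkeeping consequence of the five probability bounds established immediately above it. The only point requiring a little care is to make sure the constants in the four stretched-exponential bounds do not secretly depend on $L_1$ in a way that would sabotage the "choose $L_1$ large" step: $b_1, b_2$ depend only on $p$ and $d$, while $c_1, c_2$ (and the induced $c_4$) are fixed before $L_1$ is chosen, exactly as in the hypotheses of \cite{deuschel1996surface, penrose1996large, antal1996chemical}. Once that is observed, the dominance $\log^2 L_1 \gg B\log L_1$ does all the work for the first four terms, and taking $\lambda$ small afterwards disposes of the last one; so I would regard the "main obstacle" as merely stating the quantifier order cleanly rather than any genuine difficulty.
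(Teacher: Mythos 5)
Your proposal is correct and is exactly the argument the paper intends; the paper's own proof simply asserts the lemma is ``an immediate consequence of the bounds derived for the probabilities of the events $E_j(i)$ above,'' which is the union bound you spell out. Your care about the quantifier order ($c_1,c_2,B$ first, then $L_1$ large, then $\lambda$ small, with the $E_5$ term being the only one that needs $\lambda$) and the observation that $\exp(-c\log^2 L_1) = L_1^{-c\log L_1} \ll L_1^{-B}$ for $L_1$ large is precisely the bookkeeping the paper leaves implicit.
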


\begin{proof}
The result is an immediate consequence of the bounds derived for the probabilities of the events $E_j(i)$ above.
\end{proof}

\subsection{Good boxes at higher scales}
\label{sec:good_k_boxes}

In this section we generalise the notion of good boxes to higher scales. The idea is that for $k\geq2$, a $k$-box is good if it contains no more that $A$ disjoint bad $(k-1)$-boxes, where $A$ is a fixed constant that we set later.

\begin{definition}
(Good box at scale $k\geq2$) Fix a constant $A>0$. Let $k\geq2$ and consider the $k$-box $Q_k(i)$. Define $G_k(i)$ to be the event that $Q_k(i)$ contains no more than $A$ disjoint bad $(k-1)$-boxes. If $G_k(i)$ holds, then we define $Q_k(i)$ to be \emph{good}. Otherwise we define $Q_k(i)$ to be \emph{bad}.
\end{definition}

	From the definition of good $k$-box for $k\geq2$, we deduce that disjoint $k$-boxes being good are independent events.
	Later in Lemma~\ref{lem:prob_good_k_box} we will see that choosing $A>d$ will suffice so long as $L_1$ is large enough. The intuition behind this definition is that knowing $Q_k(i)$ is good means that there is at most $A$ disjoint bad $(k-1)$-boxes that will simplify how we control the spread of $FPP_{\lambda}$ within $Q_k(i)$. There may be many other bad boxes at lower scales, but our multi-scale framework will allow us to consider bad boxes on a scale-by-scale basis. With at most $A$ disjoint bad $(k-1)$-boxes in a good $k$-box, the total number of bad $(k-1)$-boxes in said $k$-box is bounded above by a constant that only depends on $A$ and $d$. 

Fix $k\geqslant1$. Similar to the scale 1 case, the probability of a $k$-box being good is invariant under translations, so we can define $\rho_k$ to be the probability that an arbitrary $k$-box is bad by setting
$$
    \rho_k = 1-\mathbb{P}\left(G_k(o)\right).
$$

\begin{lemma}
\label{lem:prob_good_k_box}
Let $A>d$. For a sufficiently large choice of $L_1$ and small enough choice of $\lambda$, then for all $k\geq1$, we have
$$
    \rho_k \leqslant \rho_1^{A^{k-1}}.
$$
\end{lemma}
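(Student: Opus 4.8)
The plan is to prove the bound $\rho_k \le \rho_1^{A^{k-1}}$ by induction on $k$, using a standard "counting of disjoint bad boxes" argument at each inductive step. The base case $k=1$ is trivial: $\rho_1 \le \rho_1^{A^0} = \rho_1$. For the inductive step, suppose the bound holds at scale $k-1$; I want to estimate the probability that a fixed $k$-box $Q_k(i)$ is bad, i.e.\ that it contains at least $A+1$ disjoint bad $(k-1)$-boxes. If such a collection of $A+1$ disjoint bad $(k-1)$-boxes exists, then (after thinning) one can extract a sub-collection of $(k-1)$-boxes that are pairwise ``well-separated'' — separated enough that the local events defining badness at scale $1$ (which are the only source of randomness, since badness at scale $k-1$ is a deterministic function of the configuration of bad $1$-boxes inside) are independent. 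Here I would want a combinatorial lemma: among $A+1$ disjoint $(k-1)$-boxes inside $Q_k(i)$, at least some number $m = m(A,d)$ of them can be chosen so that they are at pairwise $\ell_\infty$-distance at least $L_{k-1}$ (so their fattened versions, on which badness depends, are disjoint). Since a $(k-1)$-box has side $L_{k-1}$ and the events $G_{k-1}(\cdot)$ are measurable with respect to the passage times and seeds in the box (inflated by a bounded amount), these $m$ events are then independent, each of probability $\rho_{k-1}$.

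Next I would take a union bound over the choices of the locations of these bad boxes. The number of $(k-1)$-boxes whose core lies inside $Q_k(i)$ is at most $(3 L_k / L_{k-1})^d$, and using the recursion $L_k = k^2 L_{k-1}^d$ this is at most $(3 k^2 L_{k-1}^{d-1})^d \le L_{k-1}^{d^2}$ for $L_1$ large (absorbing the polynomial factor in $k$). Hence the number of ways to pick $m$ such boxes is at most $L_{k-1}^{d^2 m}$, and so
$$
\rho_k \;\le\; L_{k-1}^{d^2 m}\,\rho_{k-1}^{\,m}.
$$
Plugging in the inductive hypothesis $\rho_{k-1} \le \rho_1^{A^{k-2}}$ and Lemma~\ref{lem:prob_good_1_box} (which gives $\rho_1 \le L_1^{-B}$ for $B$ as large as we like, and also $L_{k-1} \le L_1^{C^{k-1}}$ for an appropriate constant $C = C(d)$ from the recursion), the factor $L_{k-1}^{d^2 m}$ is polynomially small compared to one power of $\rho_{k-1}$, so that $L_{k-1}^{d^2 m}\rho_{k-1}^{m} \le \rho_{k-1}^{m-1} \le \rho_1^{(m-1)A^{k-2}}$. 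To close the induction we then need $(m-1)A^{k-2} \ge A^{k-1}$, i.e.\ $m \ge A+1$. This is exactly why the definition of a bad $k$-box uses ``more than $A$'' disjoint bad $(k-1)$-boxes together with the constraint $A > d$: choosing $A$ large relative to $d$ ensures that after the well-separation thinning we can still retain $m \ge A+1$ mutually independent bad boxes (the thinning loses only a factor depending on $d$, e.g.\ at most $3^d$), while the $L_1$-large and $\lambda$-small hypotheses make Lemma~\ref{lem:prob_good_1_box} available so that all the polynomial-in-$L_{k-1}$ overheads are swallowed.

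The main obstacle is the separation/independence step: one has to argue carefully that from any family of $A+1$ pairwise-disjoint bad $(k-1)$-boxes one can extract $m \ge A+1$ of them that are far enough apart for the defining events to be genuinely independent, and to make the bookkeeping of constants consistent so that $m-1$ still dominates the ratio $A^{k-1}/A^{k-2} = A$. This forces the quantitative relation between $A$, $d$, and the separation loss; it is also where the non-uniqueness of the ordering of clusters and the precise measurability of $G_1(i)$ (it depends on $Q_1(i)$ and its immediate neighbourhood through the events $E_1,\dots,E_5$, hence on a box of side $O(L_1)$, not exactly $L_1$) must be handled, but enlarging boxes by a bounded factor before extracting the separated subfamily takes care of this without affecting the asymptotics. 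Everything else — the union bound, the recursion estimates on $L_k$, and absorbing the $k$-dependent polynomial prefactors into $L_1$ being large — is routine.
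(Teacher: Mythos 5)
Your plan is the right shape (induction, union bound over disjoint bad $(k-1)$-boxes, absorb the combinatorial prefactor using the recursion on $L_k$ and Lemma~\ref{lem:prob_good_1_box}), but it contains a genuine gap, and it is also built on a premise the paper explicitly avoids. The paper's proof simply uses that, in this setup, \emph{disjoint boxes are already independent}: $G_1(i)$ is measurable with respect to the seeds and the passage times $\{t_e\},\{t^{\lambda}_e\}$ for edges with both endpoints in $Q_1(i)$ (this is built into the definitions of $\mathcal G(i)$, $\mathcal C_1(i)$, and $\mathcal E(i)$), and for $k\ge 2$ the event $G_k(i)$ refers to $(k-1)$-boxes contained as subsets of $Q_k(i)$, so inductively $G_k(i)$ is measurable with respect to randomness inside $Q_k(i)$. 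The paper even flags this in the remark following the proof (``in our case, disjointness does imply independence''), and this is precisely what lets them write $\rho_k \le \bigl(3^d k^{2d} L_{k-1}^{d(d-1)}\rho_{k-1}\bigr)^{A+1}$ directly from the union bound and then push through the induction.

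Your extra ``thinning to a well-separated subfamily'' step is not only unnecessary, it is quantitatively impossible as you have set it up, and this is where the proof fails. You start with $A+1$ disjoint bad $(k-1)$-boxes, thin to obtain $m$ boxes at pairwise $\ell_\infty$-distance at least $L_{k-1}$, and acknowledge that thinning costs a factor of roughly $3^d$, so $m\approx (A+1)/3^d$. But to close the induction you compute that you need $m\ge A+1$ (so that $(m-1)A^{k-2}\ge A^{k-1}$), which no thinning can deliver from an initial family of size $A+1$: you would need $(A+1)/3^d \ge A+1$, i.e.\ $3^d\le 1$. Even if you relax to $m\ge A$, you would need $(A+1)/3^d\ge A$, i.e.\ $A\le 1/(3^d-1)<1$, which contradicts $A>d\ge 2$. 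The sentence claiming ``choosing $A$ large relative to $d$ ensures that after the well-separation thinning we can still retain $m\ge A+1$'' is the false step: increasing $A$ also increases the target $m$, so the gap cannot be closed this way. Fix: drop the thinning entirely and use that pairwise disjoint boxes are already independent, which recovers exponent $A+1$ and the paper's recursion.
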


\begin{proof}
We proceed by induction, noting that the case $k=1$ is immediate. Now let $k>1$ and assume Lemma~\ref{lem:prob_good_k_box} holds up to scale $k-1$. We can first bound $\rho_k$ by noticing that
\begin{align*}
    \rho_k \leqslant \mathbb{P}(\Pi_{A+1}),
\end{align*}
where $\Pi_{A+1}$ is the event that there are $A+1$ disjoint bad $(k-1)$-boxes in the $k$-box. We can find an upper bound on the number of $(k-1)$-boxes in a $k$-box by counting the number of $(k-1)$-cores contained in a $k$-box, which equals
\begin{align*}
    \left(\tfrac{L_k}{L_{k-1}/3}\right)^d = 3^d k^{2d}L_{k-1}^{d(d-1)}.
\end{align*}
Hence the number of possible ways to choose $(A+1)$ disjoint bad $(k-1)$-boxes is bounded above by $\left(3^d k^{2d}L_{k-1}^{d(d-1)}\right)^{A+1}$ and by taking a union bound over these configurations we see that
\begin{align}
    \rho_k&\leqslant\left(3^d k^{2d}L_{k-1}^{d(d-1)}\rho_{k-1}\right)^{A+1},\nonumber\\
    &\leqslant \left(3^d k^{2d}L_{k-1}^{d(d-1)}\right)^{A+1}\rho_1^{(A+1)A^{k-2}}, \label{pf:prob_good_k_box1}
\end{align}
wherein the last line we appeal to the inductive hypothesis. The result follow from \eqref{pf:prob_good_k_box1} if we prove that
\begin{align}
\left(3^d k^{2d}L_{k-1}^{d(d+1)}\right)^{A+1}\leq \rho_1^{-A^{k-2}},
\label{eq:tobeproved}
\end{align}
by setting $L_1$ large enough and then $\lambda$ small enough. We first note that by \eqref{eq:L_k_def} we have that
$$
L_{k} = \left(\prod_{j=2}^{k}j^{2d^{k-j}}\right)L_1^{d^{k-1}}.
$$
Hence there exists a constant $c>0$ such that if $L_1$ is sufficiently large, we have that 
$$
3^dk^{2d}L_{k-1}^{d(d+1)} \leq L_1^{d^{k+c}}
$$
for all $k\geq2$. 
Consequently,
\begin{align}
\left(3^d k^{2d}L_{k-1}^{d(d+1)}\right)^{A+1} \leq L_1^{(A+1)d^{k+c}} \leq L_1^{A^{k+c+2}},  \label{pf:prob_good_k_box2}
\end{align}
where the last inequality follows as $A>d$. Henceforth fix $c$ such that \eqref{pf:prob_good_k_box2} holds. By Lemma~\ref{lem:prob_good_1_box}, for any constant $B>0$, if $L_1$ is large enough and $\lambda$ is small enough then 
$$
L_1 < \rho_1^{-1/B}.
$$
In this case, from \eqref{pf:prob_good_k_box2} we deduce that 
$$
\left(3^d k^{2d}L_{k-1}^{d(d+1)}\right)^{A+1} < \rho_1^{-\tfrac{A^{k+c+2}}{B}} < \rho_1^{-A^{k-2}},
$$
where the last inequality follows for all $k\geq2$ by setting $B$ large enough. Thus, we established~\eqref{eq:tobeproved} and the result follows by induction.
\end{proof}

It is worth noting that in many applications of such multi-scale analysis constructions, \emph{disjoint} boxes do not necessarily mean \emph{independent} boxes. To deduce an analogue of Lemma~\ref{lem:prob_good_k_box} in such cases, one will typically use a decay of correlation or decoupling inequalities which despite adding some terms (that are proven to be of smaller order) still lead to a similar recursion as in the proof of Lemma~\ref{lem:prob_good_k_box}. In our case, disjointness does imply independence and so our recursion argument simplifies to just controlling a combinatorial term.

\section{Multi-scale analysis with non-equilibrium feedback}
\label{sec_new_MSA}

In this section we enhance the multi-scale construction of Section \ref{sec_standard_MSA} in a way that will allow us to handle the non-equilibrium dynamics of FPPHE; 
we call this approach \emph{multi-scale analysis with non-equilibrium feedback}. 
It will be useful to have in mind the intuitive explanation of our idea, which is given in Section~\ref{sec:MAwNEF}.
We start this section giving some further details to this high-level discussion, and then we move to the rigorous construction.

\subsection{High level description}

As explained in Section~\ref{sec:MAwNEF}, we will introduce a notion of \emph{timeliness} to the entrance of a good box, so that 
each \emph{good} $k$-box will be further classified as having either \emph{positive feedback} or \emph{negative feedback}, whereas \emph{bad} $k$-boxes are not further classified.
Roughly speaking, a $k$-box will be called of \emph{positive feedback} if $FPP_1$ is able to occupy a site away from the boundary of the $k$-box sufficiently fast in comparison to the time the box is first visited 
by either $FPP_1$ or $FPP_{\lambda}$.
Moreover, each $k$-box $Q$ will be associated to another $k$-box, 
called its \emph{parent}. The parent of $Q$ is the box that contains inside its core the site of $Q$ that is first visited by either $FPP_1$ or $FPP_{\lambda}$.
The above definitions will be later given in a fully precise manner, and will be tuned in a way to imply the following properties: 
\begin{itemize}
   \item[\eqref{cask}] A $k$-box with positive feedback is mostly occupied by $(k-1)$-boxes of positive feedback, which is a type of \emph{cascading} property of positive feedback boxes.
   \item[\eqref{progk}] The parent of a negative feedback $k$-box is either of negative feedback or a bad $k$-box, which will give rise of a so-called \emph{progenitor structure} as we discuss below.
   \item[\eqref{F_k}] A good $k$-box is entered quickly if there is a nearby $k$-box of positive feedback, giving that positive feedback propagates \emph{fast} enough. 
   \item[\eqref{D_k^C}] A negative feedback $k$-box has an entrance time that is sufficiently after the entrance of its parent (if its parent is also of negative feedback), 
      giving that negative feedback $k$-boxes have a \emph{delayed} entrance time.
\end{itemize}
The emphasized words appearing above are the terms that we chose to represent each property and that give the property its label. 

One could keep in mind from~\eqref{progk} that the parent of $Q$ \emph{witnesses} the fact that $Q$ is of negative feedback. 
So, if the parent of $Q$ is also (a good box) of negative feedback, 
then by \eqref{progk} one finds that it must have a parent itself that is of negative feedback or bad. 
Applying \eqref{progk} inductively, one obtains that any negative feedback box can be associated to a path (or a \emph{trail}) of negative feedback boxes 
(following each box's parent) that culminates into a \emph{bad} box with a good parent, which will be called the \emph{progenitor} of $Q$.  
(We remark that the trail of a negative feedback box can contain in its interior both negative feedback boxes and bad boxes with a negative feedback parent, and culminates into the progenitor which is the first 
bad box with positive feedback parent found along the path of parents.)
We will refer to this as a progenitor structure, which will be the crucial property that will allow 
us to control where negative feedback boxes may occur.
This is precisely the property~\eqref{it:pneg} described in 
Section~\ref{sec:MAwNEF}.
We emphasise~\eqref{progk} is not immediate from the definitions and requires $\lambda$ to be sufficiently small (cf.\ Lemma~\ref{lem:prog_1} and Lemma~\ref{lem:induction1}).
 
Properties~\eqref{F_k} and~\eqref{D_k^C} guarantee that FPPHE gets delayed when traversing a trail of negative feedback boxes, while $FPP_1$ can quickly traverse through positive feedback boxes nearby. 
But note that no delay can be guaranteed when FPPHE passes from a bad box to a negative feedback box (or vice-versa).
Nonetheless, the conclusion we can obtain is that there cannot be long trails composed of large sequences of negative feedback boxes, which together with a control showing that bad boxes are rare
will allow us to confine the trail of a negative feedback box to a small region around its progenitor. 

Putting the properties together will allow us to conclude the following positive property from positive feedback boxes:
\begin{align}
\mbox{positive feedback implies most non-seed sites in said box are occupied by } FPP_1.
\label{cask_roughly}
\end{align}
This is precisely property~\eqref{it:ppos} described in Section~\ref{sec:MAwNEF}.
We note that we cannot guarantee that $FPP_1$ occupies the \emph{whole} of the box for several reasons. 
For example, in scale 1, the box will typically contain a density of seeds, and in higher scale it may contain bad boxes of smaller scale 
inside which we cannot give any guarantee for the spread of $FPP_1$. 
Moreover, just by assuming a box is of positive feedback does not give us much information to control the spread of $FPP_1$ on sites that are close to the boundary of the box. 
This last issue will be solved, however, using the fact that boxes have a large overlap. 
So, the sites that are near the boundary of a box will be contained in the inner part of other boxes, 
so the spread of $FPP_1$ on those sites will be controlled by asserting whether those other boxes are of positive feedback. 

The proof will then proceed as follows. We will start in Section~\ref{sec:setL_1} with a brief clarification on how $L_1$ and $\lambda$ will be set, and then 
define positive feedback for scale 1 boxes in Section~\ref{sec_GE_1}, and introduce the parent structure and the progenitor of a box in
Section~\ref{sec:Parent_progenitor_def}. Then our proof continues through three major steps: 
first we show that~\eqref{cas1} holds, then we show that, for all $k$, property~\eqref{cask} implies~\eqref{progk},~\eqref{F_k} and~\eqref{D_k^C}, 
and ultimately we show that if all properties hold up to scale $k-1$ then~\eqref{cask} holds. The three steps together inductively guarantee that all properties hold for all $k$.
The details of the proofs will be carried out in a few sections, as we need to distinguish the case of scale $1$ from higher scales. 
The proof for scale 1 is derived in Section~\ref{sec_GE_1} for~\eqref{cas1}, Section~\ref{sec:Parent_progenitor_def} for~$\left(\text{Prog}_1\right)$ 
and Section~\ref{sec_control} for~\eqref{F_1_1} and~\eqref{D_1_C}. The other sections are reserved to the definitions and the proofs for boxes of higher scales.

\subsection{Setting $L_1$ and $\lambda$}
\label{sec:setL_1}

Recall that the results of Section~\ref{sec_standard_MSA} hold (in particular, Lemma~\ref{lem:prob_good_k_box}) for all $L_1$ large enough and $\lambda$ small enough. 
That is, there exists a value $\bar L$, which depends on $d$, $p$ and on the constant $A$ from the definition of good boxes, 
and $\bar \lambda=\bar \lambda(L_1, d, p, A)>0$ such that for all $L_1\geq \bar L$, if $\lambda\in(0,\bar\lambda)$ then the above results hold.
Henceforth we fix $L_1$ larger than $\bar L$ and such that $L_1 \geq 5000\log^2 L_1$; the value $5000$ being an arbitrary choice, which just allows us to guarantee that $L_1$ is sufficiently 
larger than $\log^2 L_1$. 
We will also require $L_1$ to be large enough with respect to some conditions on $A$, $d$ and $p$ that we only specify in later sections for ease of exposition.
Now that $L_1$ has been fixed, we shall not write any condition on $L_1$ in the lemmas from this section. 

Regarding $\lambda$, we will require that $\lambda$ is not only smaller than $\bar \lambda$ but also smaller than another constant. To encapsulate the condition on $\lambda$ in a simpler form, 
for any constant $x>0$, we define
\begin{align}
   \lambda_x = \min\left\{\bar\lambda,\frac{1}{\left(x+r_1\right)^2L_1^2}\right\}, \label{def:lambda_x}
\end{align}
where $r_1$ is a constant that only depends on $d$ and $p$ that we set later in Fact~\ref{fact}. Then by properly choosing $x$ in the lemmas below we will enforce proper conditions on $\lambda$. Note that $\lambda_x$ depends on $L_1$, but we omit this from the notation since $L_1$ is regarded as fixed from now onwards.

\subsection{Positive feedback at scale 1}
\label{sec_GE_1}

In this section we define what it means for a good $1$-box to have positive feedback and prove that a natural consequence of a $1$-box having positive feedback is that \emph{most} of the non-seed sites in that box are occupied by $FPP_1$, in a sense that is made rigorous in Lemma~\ref{lem:main_lemma_1}. 

Before we define positive feedback at scale 1, we first introduce some notation that will be useful for the remainder of the paper. Given $v\in\mathbb{Z}^d$, define the \emph{entrance time for $v$}, written as $\tau(v)$, as the earliest time that either $FPP_1$ or $FPP_{\lambda}$ occupies $v$, so that
$$
\tau(v) = \inf\left\{t\geq0: \eta_t(v)\in\{1,2\}\right\}.
$$
For $k\geq1$ and $i\in\mathbb{Z}^d$, define the \emph{entrance time for }$Q_k(i)$, written as $\tau_{k,i}$, as the earliest time any site in $Q_k(i)$ is occupied by either $FPP_1$ or $FPP_{\lambda}$, so that
\begin{equation}
\label{def:entrance_time}
\tau_{k,i}=\inf\left\{\tau(v) \colon v\in Q_k(i)\right\}. 
\end{equation}

\begin{fact}
\label{fact}
Suppose $Q_1(i)$ is a good $1$-box. Recall $\mathcal{C}^{-}_1(i)$ is the component of non-seeds $\mathcal{C}_1(i)$ with sites in $\partial Q_1(i)$ removed. In later arguments it will be useful to have an upper bound on the passage times between any two sites in $\mathcal{C}^{-}_1(i)$. Indeed, as a direct consequence of the definition of a good $1$-box, for any $x,y\in\mathcal{C}^{-}_1(i)$, there exists a path $\gamma$ from $x$ to $y$ such that 
$$
\gamma \subset \mathcal{C}^{-}_1(i) \quad\text{and}\quad T_1(\gamma) < c_2\max\left\{\|x-y\|_1,\log^2 L_1\right\}\leq dc_2 L_1,
$$
where we recall that $c_2$ is a large enough constant set in the definition of a good $1$-box. It will be useful to set $r_1=dc_2$ to be consistent with notation we set later. 
\end{fact}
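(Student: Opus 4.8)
The plan is to read the statement off the definition of a good $1$-box — concretely, off the event $E_4(i)$ — and then to reduce the right-hand side to $dc_2L_1$ using only that $Q_1(i)$ is a cube of side length $L_1$. There is essentially no content beyond bookkeeping here, which is exactly why it is recorded as a Fact rather than a lemma.

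Since $Q_1(i)$ is good, the event $G_1(i)=\bigcap_{j=1}^{5}E_j(i)$ holds, so in particular $E_4(i)$ holds. Let $x,y\in\mathcal{C}^{-}_1(i)$. As $\mathcal{C}^{-}_1(i)=\mathcal{C}_1(i)\setminus\partial Q_1(i)\subseteq\mathcal{C}_1(i)$, the event $E_4(i)$ gives $T_1(x\to y;\mathcal{G}(i))<c_2\max\{\|x-y\|_1,\log^2 L_1\}$, and hence, by the definition of the restricted passage time, there is a finite simple path $\gamma\in\Gamma(x\to y;\mathcal{G}(i))$ with $T_1(\gamma)<c_2\max\{\|x-y\|_1,\log^2 L_1\}$. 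Because $\mathcal{C}_1(i)$ is a connected component of $\mathcal{G}(i)$ and both endpoints of $\gamma$ lie in it, every vertex of $\gamma$ lies in $\mathcal{C}_1(i)$. The only point that deserves a word is the upgrade to $\gamma\subset\mathcal{C}^{-}_1(i)$: since $\mathcal{G}(i)$ contains no edge with both endpoints in the one-site-thick inner boundary $\partial Q_1(i)$, any visit of $\gamma$ to $\partial Q_1(i)$ is entered and exited through vertices of $\mathcal{C}^{-}_1(i)$, and $E_1(i)$ together with $E_2(i)$ force $\mathcal{C}^{-}_1(i)$ to be connected (the largest cluster meets every interior core in a linear-size piece touching all its faces, while every competing cluster is only logarithmically large); a minor rerouting of each such excursion through $\mathcal{C}^{-}_1(i)$, whose cost is controlled by the same chemical-distance and Chernoff estimates underlying $E_3(i)$ and $E_4(i)$, then produces a path contained in $\mathcal{C}^{-}_1(i)$ with the same order of passage time.

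Finally, since $x,y\in Q_1(i)$ and $Q_1(i)$ is a translate of $[-L_1/2,L_1/2]^d$, we have $\|x-y\|_1\le dL_1$, and $\log^2 L_1\le L_1\le dL_1$ by the standing assumption $L_1\ge 5000\log^2 L_1$ from Section~\ref{sec:setL_1}; hence $\max\{\|x-y\|_1,\log^2 L_1\}\le dL_1$, so $T_1(\gamma)<c_2\,dL_1=dc_2L_1$, and setting $r_1:=dc_2$ gives the claim. I do not expect any real obstacle: the assertion is a corollary of $E_4(i)$, the only mildly technical point being the passage from $\mathcal{C}_1(i)$ to $\mathcal{C}^{-}_1(i)$, which the events $E_1(i)$ and $E_2(i)$ dispatch.
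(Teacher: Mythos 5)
Your reading of $E_4(i)$ and the reduction $\max\{\|x-y\|_1,\log^2 L_1\}\le dL_1$ are exactly what the paper intends; the Fact is asserted there without a written-out proof, and your fill-in follows the same route. The one step you add — upgrading from $\gamma\subset\mathcal{C}_1(i)$ (which $E_4(i)$ delivers automatically, since $\mathcal{C}_1(i)$ is a component of $\mathcal{G}(i)$) to $\gamma\subset\mathcal{C}^{-}_1(i)$ — is indeed the only point that isn't automatic, and you are right to isolate it.

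However, your rerouting argument for that step is looser than it looks. First, $E_1(i)$ and $E_2(i)$ do not literally give connectedness of $\mathcal{C}^{-}_1(i)$: $E_1(i)$ only says the restriction $\mathcal{C}_1(i)\cap Q_1^{\text{core}}(i')$ is large and touches all faces, not that it is connected, and removing $\partial Q_1(i)$ could a priori disconnect $\mathcal{C}_1(i)$ through a boundary articulation vertex. Second, even granting a reroute, the quantitative cost of bypassing each single-vertex excursion into $\partial Q_1(i)$ would, if carried out honestly, degrade the constant from $c_2$ to some $c_2'>c_2$ — whereas the Fact claims the bound with $c_2$ itself. The reading under which the Fact really is a \emph{direct} consequence with the stated constant is that $E_3(i)$ and $E_4(i)$ are to be taken over $\mathcal{G}$ restricted to $Q_1(i)\setminus\partial Q_1(i)$ (boundary \emph{vertices} removed, not just boundary edges): then the minimizing path never visits $\partial Q_1(i)$, so it lies in $\mathcal{C}^{-}_1(i)$ for free, and the Antal--Pisztora and Chernoff estimates apply verbatim to the slightly smaller box. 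Your proof should either make that interpretation explicit or accept a larger constant in place of $c_2$, rather than lean on an unproved connectedness claim plus an unquantified reroute.
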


\begin{definition}[Positive feedback at scale 1]
Let $r_1$ be as given in Fact~\ref{fact} and suppose that $Q_1(i)$ is good. Define $Q_1(i)$ to have \emph{positive feedback} if there exists a site $v\in\mathcal{C}^{-}_1(i)$ occupied by the $FPP_1$ process so that
$$
\tau(v)\leq\tau_{1,i}+r_1L_1.
$$
If $Q_1(i)$ is good but does not have positive feedback, then we define it to have \emph{negative feedback}.
\end{definition}

From positive feedback we now define a property that will be a key component in a cascading argument in the proof of Theorem~\ref{maintheorem}:
\begin{equation}
\label{cas1}
\text{If $Q_1(i)$ has positive feedback and $v\in\mathcal{C}^{-}_1(i)$, then $v$ is occupied by $FPP_1$.} \tag{$\text{CAS}_1$}
\end{equation}
If $Q_1(i)$ is good and $v\in \mathcal{C}^{-}_1(i)$ is occupied by $FPP_{\lambda}$, then 
$$
\tau(v)\geq\tau_{1,i} + \tfrac{1}{\sqrt{\lambda}},
$$
by definition of good $1$-box as $FPP_{\lambda}$ must cross at least one edge whose endpoints are both contained in $Q_1(i)$. 
The intuition is that by setting $\lambda$ to be small enough, we can guarantee that \eqref{cas1} holds, as given in the following lemma. 
Recall the definition of $\lambda_x$ from~\eqref{def:lambda_x}.

\begin{lemma}
\label{lem:main_lemma_1}
If $\lambda\in(0,\lambda_{r_1})$, then \eqref{cas1} holds.
\end{lemma}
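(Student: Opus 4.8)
The plan is to argue by contradiction: suppose $Q_1(i)$ has positive feedback but some site $v \in \mathcal{C}^{-}_1(i)$ is occupied by $FPP_\lambda$. By the definition of positive feedback there is a site $w \in \mathcal{C}^{-}_1(i)$ occupied by $FPP_1$ with $\tau(w) \le \tau_{1,i} + r_1 L_1$. First I would observe that, once $w$ is occupied by $FPP_1$, the $FPP_1$ process can spread \emph{within} $\mathcal{C}^{-}_1(i)$ using the fast passage times guaranteed by the event $E_4(i)$: by Fact \ref{fact}, for every site $u \in \mathcal{C}^{-}_1(i)$ there is a path $\gamma \subset \mathcal{C}^{-}_1(i)$ from $w$ to $u$ with $T_1(\gamma) < r_1 L_1$. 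Crucially this path stays inside $\mathcal{C}^{-}_1(i)$, which contains no seeds, so nothing along $\gamma$ can block $FPP_1$ except a prior occupation of some site of $\gamma$ by $FPP_\lambda$. Hence $FPP_1$ occupies every site of $\mathcal{C}^{-}_1(i)$ by time at most $\tau_{1,i} + r_1 L_1 + r_1 L_1 = \tau_{1,i} + 2 r_1 L_1$, \emph{unless} $FPP_\lambda$ gets to one of those sites first.

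Next I would lower-bound the time at which $FPP_\lambda$ can reach any site of $\mathcal{C}^{-}_1(i)$. Since every such site lies in the interior of $Q_1(i)$, for $FPP_\lambda$ to occupy a site $v \in \mathcal{C}^{-}_1(i)$ it must traverse at least one edge of $\mathcal{E}(i)$ (an edge with both endpoints in $Q_1(i)$) using the $\{t_e^\lambda\}$ passage times — indeed, $FPP_\lambda$ either originates from a seed inside $Q_1(i)$, whose first step out uses such an edge, or enters $Q_1(i)$ from outside and must cross the boundary. Actually, to reach the interior set $\mathcal{C}^{-}_1(i)$ it must cross at least the final edge, which lies in $\mathcal{E}(i)$. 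By event $E_5(i)$ every edge in $\mathcal{E}(i)$ has $t_e^\lambda \ge 1/\sqrt{\lambda}$, so $\tau(v) \ge \tau_{1,i} + 1/\sqrt{\lambda}$ for any $FPP_\lambda$-occupied $v \in \mathcal{C}^{-}_1(i)$, as already noted in the text preceding the lemma.

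Now the two bounds are compared. If $\lambda \in (0, \lambda_{r_1})$, then by the definition \eqref{def:lambda_x} of $\lambda_x$ we have $\lambda \le \tfrac{1}{(2r_1)^2 L_1^2}$ (taking $x = r_1$ gives $\lambda_{r_1} \le \tfrac{1}{(2r_1)^2 L_1^2}$), hence $\tfrac{1}{\sqrt{\lambda}} \ge 2 r_1 L_1$. So any $FPP_\lambda$-occupied site of $\mathcal{C}^{-}_1(i)$ has entrance time at least $\tau_{1,i} + 2 r_1 L_1$, which is no earlier than the time $\tau_{1,i} + 2 r_1 L_1$ by which $FPP_1$ would otherwise have occupied it. One has to rule out a tie/race subtlety: if at some moment $FPP_\lambda$ is about to occupy a site of $\mathcal{C}^{-}_1(i)$ at a time $\le \tau_{1,i}+2r_1 L_1$, I would take the first such site $v$ in time; by minimality $FPP_\lambda$ occupied no site of $\mathcal{C}^{-}_1(i)$ before $\tau(v)$, so the $FPP_1$-spread argument above (which only required that $FPP_\lambda$ not reach sites of $\mathcal{C}^{-}_1(i)$ first) shows $FPP_1$ occupies $v$ strictly before $\tau_{1,i}+2r_1L_1 \le \tau(v)$, a contradiction. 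I would tighten the inequalities (e.g.\ using a strict inequality somewhere, or a slightly larger constant than $2r_1$ inside $\lambda_{r_1}$, which is exactly why $r_1$ appears additively in \eqref{def:lambda_x}) so the contradiction is clean.

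The main obstacle I anticipate is the bookkeeping around the propagation-within-$\mathcal{C}^{-}_1(i)$ step: one must verify that $FPP_1$ spreading from $w$ along $\gamma$ is not blocked, which requires that no vertex of $\gamma$ was occupied by $FPP_\lambda$ before $FPP_1$ got there — and this is precisely what the minimal-counterexample choice of $v$ buys us, but it needs to be phrased carefully as an induction on the order of occupation times along $\gamma$ (or along all of $\mathcal{C}^{-}_1(i)$ simultaneously), rather than a one-shot estimate. A secondary point is making sure the additive constant $r_1$ (versus $2r_1$, etc.) in the definition of $\lambda_{r_1}$ gives enough slack; since $\lambda_{r_1}$ uses $(r_1+r_1)^2 = (2r_1)^2$ in the denominator, $1/\sqrt{\lambda_{r_1}} \ge 2r_1 L_1$ exactly, and any strictness needed can be extracted from the strict inequality $T_1(\gamma) < r_1 L_1$ in Fact \ref{fact} together with $\lambda < \lambda_{r_1}$ (strict).
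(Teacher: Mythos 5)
Your proposal is correct and follows essentially the same route as the paper's proof: by contradiction, compare the upper bound $\tau(v) < \tau_{1,i} + 2r_1L_1$ (from positive feedback plus Fact~\ref{fact}) with the lower bound $\tau(v) \ge \tau_{1,i} + 1/\sqrt{\lambda}$ (from $E_5$), and note $\lambda < \lambda_{r_1}$ forces $1/\sqrt{\lambda} > 2r_1L_1$. The paper also takes $v$ to be the $FPP_\lambda$-occupied site of $\mathcal{C}^-_1(i)$ with earliest entrance time, exactly as you do, to justify that $FPP_1$ is not blocked along the path $\gamma$; you spell out this blocking/race subtlety more explicitly than the paper does, which is a fair observation, but the resolution is identical.
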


\begin{proof}
Suppose $Q_1(i)$ has positive feedback. Let $v\in\mathcal{C}^{-}_1(i)$ and suppose for contradiction that $v$ is occupied by $FPP_{\lambda}$. Without losing generality, we may assume that $v$ is such a site with earliest entrance time in $\mathcal{C}^{-}_1(i)$, so that
$$
\tau(v) = \inf\left\{\tau(w): w\in\mathcal{C}^{-}_1(i)\text{ and }w\text{ is occupied by }FPP_{\lambda}\right\}.
$$
As $v\in\mathcal{C}^{-}_1(i)$, we note that $v$ is not a seed, and so by definition of good $1$-box we have
\begin{equation}
\label{pf:main_lemma_1_1}
\tau(v)\geq\tau_{1,i} + \tfrac{1}{\sqrt{\lambda}}. 
\end{equation}
By definition of positive feedback, there exists a site $u\in\mathcal{C}^{-}_1(i)$ that is occupied by the $FPP_1$ process by time $\tau_{1,i}+r_1L_1$. Recalling Fact~\ref{fact}, there exists a path $\gamma$ from $u$ to $v$ such that $\gamma\subset \mathcal{C}^{-}_1(i)$ and $T_1(\gamma)<r_1L_1$. Thus
\begin{equation}
\label{pf:main_lemma_1_2}
\tau(v) < \tau(u) + r_1L_1 < \tau_{1,i} + 2r_1L_1.
\end{equation}
If $\lambda$ is small enough so that the inequality $2r_1L_1 < \tfrac{1}{\sqrt{\lambda}}$ holds, then \eqref{pf:main_lemma_1_2} contradicts \eqref{pf:main_lemma_1_1}, which establishes the result.
\end{proof}

\subsection{Parent and progenitor structure}
\label{sec:Parent_progenitor_def}

In this section we provide the fundamental structure that will allow us to control entrance times for boxes at all scales. First we introduce the notion of a \emph{parent}. Recall that for all $k\geq1$, $k$-cores give a natural partition of $\mathbb{Z}^d$ and can be used to uniquely identify from where a $k$-box is entered first by either $FPP_1$ or $FPP_{\lambda}$.

\begin{definition}[Parent]
\label{def:parent}
Fix $k\geq1$ and $i\in\mathbb{Z}^d$. Let $e_{k,i}\in\mathbb{Z}^d$ be the almost surely unique site in $Q_k(i)$ that is occupied by either $FPP_1$ or $FPP_{\lambda}$ at time $\tau_{k,i}$, as given in \eqref{def:entrance_time}. We define the \emph{parent} of $Q_k(i)$ as the $k$-box $Q_k(i^{\text{par}})$ satisfying $e_{k,i}\in Q_k^{\text{core}}(i^{\text{par}})$.
\end{definition}

\begin{figure}[!htb]
	\center{\includegraphics[width=0.4\textwidth]{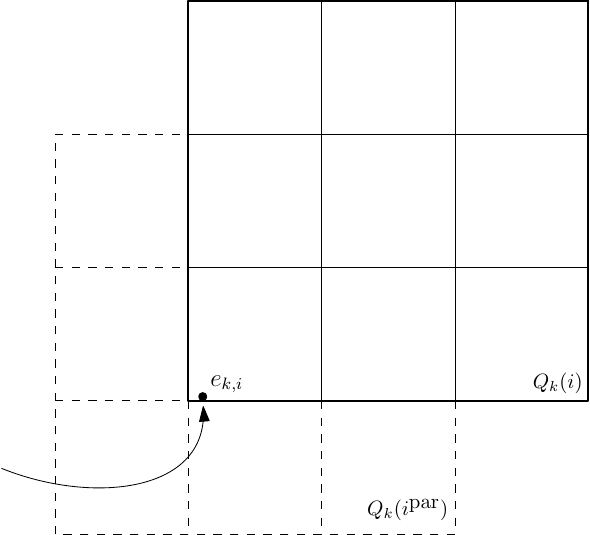}}
   \caption{A $k$-box $Q_k(i)$ and its parent $Q_k(i^{\text{par}})$ where squares represent the cores tessellating $\mathbb{Z}^d$. The arrow represents the spread of $FPP_1$ or $FPP_{\lambda}$ that is first to enter $Q_k(i)$ through $e_{k,i}$.}
	\label{fig:parent} 
\end{figure}

From the definition of parent, there is a \emph{parent structure} that encodes entrance times as a directed tree, where the vertices are $k$-cores, and the root of the tree is the $k$-core containing the origin. See Figure~\ref{fig:parent} for an illustration of the parent structure. This parent structure is useful as it gives us information about where $FPP_1$ or $FPP_{\lambda}$ first enters a box and if $Q_k(i)$ is a $k$-box with parent $Q_k(i^{\text{par}})$, then we know that $\tau_{k,i^{\text{par}}}\leq\tau_{k,i}$.

A key element to our method is deducing that all bad boxes and boxes with negative feedback can be traced to a unique bad box, which in some sense \emph{caused} them to be bad or of negative feedback. 
To make this notion precise, we first define the \emph{progenitor} of a bad box or box with negative feedback at all scales. 
We remark that we have not yet defined positive or negative feedback at higher scales but for ease of exposition we provide the definition of progenitor now.

Before we define progenitor rigorously, it will be useful to fix some notation. Given any $k$-box, we can trace through successive parents as far back as we desire, all the way back to $Q_k(o)$, which is the unique $k$-box that is its own parent. For an arbitrary $k$-box $Q_k(i_1)$, we set
$$
\mathcal{P}_k^{i_1\leftarrow i_N} =\{Q_k(i_1),Q_k(i_2),\ldots,Q_k(i_N)\}
$$
to be a collection of $k$-boxes such that $Q_k(i_n)=Q_k(i_{n-1}^{\text{par}})$ for all $n\in\{2,3,\ldots,N\}$. We define $\mathcal{P}_k^{i_1\leftarrow i_N}$ to be the \emph{path of parents from }$Q_k(i_1)$\emph{ to }$Q_k(i_N)$.

\begin{definition}[Progenitor]
\label{def:progenitor}
Let $k\geq1$ and suppose $Q_k(i)$ is bad or has negative feedback. Without losing generality, 
we may assume that the $k$-box whose core contains the origin has positive feedback. 
We let $Q_k(i^{\text{prog}})$ to be the unique $k$-box such that the path of parents $\mathcal{P}_k^{i\leftarrow i^{\text{prog}}}$ contains only bad $k$-boxes or $k$-boxes with negative feedback and the parent of $Q_k(i^{\text{prog}})$ has positive feedback. 
We define $Q_k(i^{\text{prog}})$ to be the \emph{progenitor} of $Q_k(i)$.
\end{definition}

For $k\geq1$, if $Q_k(o)$ is good and $o\in\mathcal{C}_1(o)$ then it will follow that $Q_k(o)$ has positive feedback. 
This is clear from the definition of positive feedback at scale $1$ and will follow once we define positive feedback at higher scales. 
Hence if $Q_k(o)$ has positive feedback and $Q_k(i)$ is bad or has negative feedback, then $\mathcal{P}_k^{i\leftarrow o}$ contains at least one $k$-box with positive feedback, and thus the progenitor is well-defined. 

The definition of progenitor does not on its own provide a useful structure to better understand the structure of boxes with negative feedback. Indeed, it could be the case that a box with negative feedback is its own progenitor. The utility of the definition of progenitor becomes apparent by introducing the following property that we refer to as a \emph{progenitor structure}:
\begin{equation}
\label{progk}
\text{If $Q_k(i)$ has negative feedback, then $Q_k(i^{\text{par}})$ is either bad or has negative feedback.} \tag{$\text{Prog}_k$}
\end{equation}
If there is a progenitor structure at scale $k$, then every $k$-box with negative feedback can be traced back to its progenitor, which must be a bad $k$-box. 
Moreover, all $k$-boxes on the path of parents from a $k$-box with negative feedback to its progenitor must be bad or have negative feedback. These are key properties that will allow us to control the entrance times of boxes with negative feedback. In Figure~\ref{fig:ProgStructure}, we see how to implement the progenitor structure to find the progenitor of a bad box or negative feedback box from the parent structure. The arrows provide the parent structure, and by backwards traversing the arrows from a pink or red box, we arrive at a unique red (bad) box that is the progenitor. We note that it is possible for the parent of a box of positive feedback to have negative feedback or be bad. 

\begin{figure}[!htb]
	\center{\includegraphics[width=0.4\textwidth]{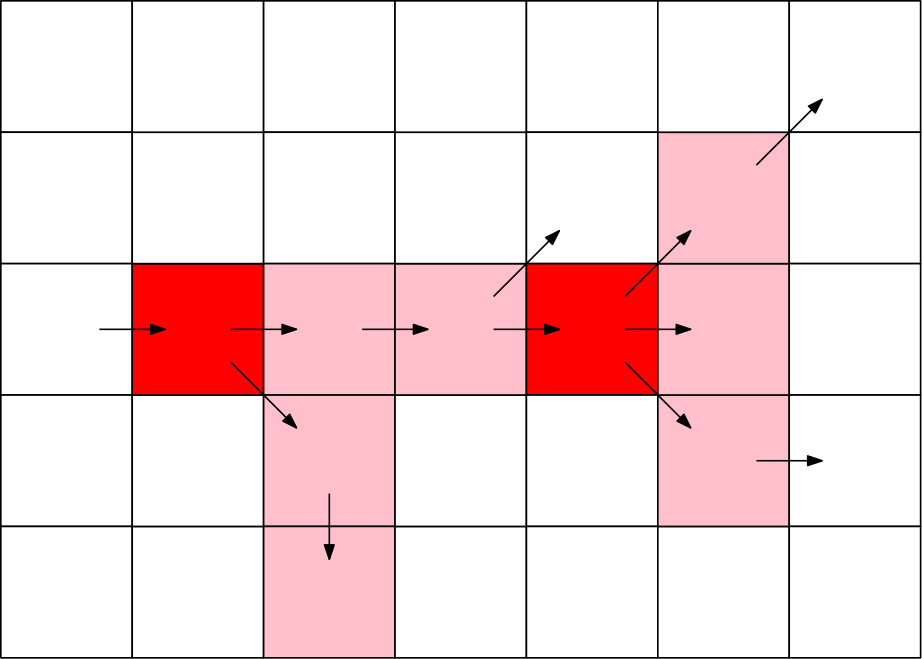}}
   \caption{Above is an illustration of the progenitor structure from a bad box. Red boxes represent cores of bad boxes, pink boxes represent the cores of boxes of negative feedback and
   the white boxes represent cores of boxes of positive feedback. 
   The arrows point to the core of a box from the core of its parent. The unique box with a parent of positive feedback (the leftmost red box) is the progenitor of all the other pink and red boxes.}
	\label{fig:ProgStructure} 
\end{figure}

The following lemma gives that if $\lambda$ is small enough so that \eqref{cas1} holds, then $(\text{Prog}_1)$ holds, providing the desired progenitor structure at scale $1$.

\begin{lemma}
\label{lem:prog_1}
If $\lambda\in(0,\lambda_{r_1})$, then $(\emph{Prog}_1)$ holds.
\end{lemma}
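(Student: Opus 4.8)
The plan is to prove the contrapositive of $(\text{Prog}_1)$: assuming $Q_1(i)$ is good and its parent $Q_1(i^{\text{par}})$ has positive feedback, I will show that $Q_1(i)$ also has positive feedback. Throughout I use that $\lambda\in(0,\lambda_{r_1})$, so Lemma~\ref{lem:main_lemma_1} applies and \eqref{cas1} holds, and I work, as assumed from Definition~\ref{def:progenitor} onwards, on the event that $Q_1(o)$ is good with $o\in\mathcal C_1(o)$, so that the non-seed component of the origin is large (in particular of size exceeding $\log^2|Q_1(o)|$). Two geometric facts will be used repeatedly. First, by Definition~\ref{def:parent} the entrance site $e_{1,i}$ of $Q_1(i)$ lies in $Q_1^{\text{core}}(i^{\text{par}})$, hence at $\ell^\infty$-distance at least $L_1/3$ from both $\partial Q_1(i^{\text{par}})$ and $\Z^d\setminus Q_1(i^{\text{par}})$. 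Second, $e_{1,i}\in Q_1(i)\cap Q_1^{\text{core}}(i^{\text{par}})$ forces $\|i-i^{\text{par}}\|_\infty\le 2$, so $Q_1(i)$ and $Q_1(i^{\text{par}})$ overlap in a box of side at least $L_1/3$; reasoning exactly as for neighbouring $1$-boxes in Section~\ref{sec:good_1_boxes} (using $E_1,E_2$ for both boxes on a common interior $1$-core of the overlap) gives $\mathcal C_1^-(i)\cap\mathcal C_1^-(i^{\text{par}})\neq\emptyset$; fix $w_0$ in this set.

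Next I would establish a time budget. By \eqref{cas1} every site of $\mathcal C_1^-(i^{\text{par}})$ is occupied by $FPP_1$; combining this with the fact that the parent's positive-feedback witness is occupied by time $\tau_{1,i^{\text{par}}}+r_1L_1$ and with Fact~\ref{fact}, which supplies between any two sites of $\mathcal C_1^-(i^{\text{par}})$ a connecting path inside $\mathcal C_1^-(i^{\text{par}})$ of $FPP_1$-passage time $<r_1L_1$ (along which the spread is unobstructed, since \eqref{cas1} forbids $FPP_\lambda$ on $\mathcal C_1^-(i^{\text{par}})$), one sees that $FPP_1$ occupies every site of $\mathcal C_1^-(i^{\text{par}})$ by time $\tau_{1,i^{\text{par}}}+2r_1L_1$. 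In particular $\tau(w_0)<\tau_{1,i^{\text{par}}}+2r_1L_1$, and since $w_0\in Q_1(i)$ this gives $\tau_{1,i}\le\tau(w_0)<\tau_{1,i^{\text{par}}}+2r_1L_1$.

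I then case-split on how $e_{1,i}$ becomes occupied at time $\tau_{1,i}$. If it is occupied by $FPP_\lambda$ --- a non-seed taken by $FPP_\lambda$, or a seed activated by an $FPP_\lambda$-attempt --- then some $FPP_\lambda$-occupied neighbour $y$ of $e_{1,i}$ satisfies $\tau_{1,i}=\tau(y)+t^\lambda_{\{y,e_{1,i}\}}$; as $e_{1,i}$ is deep in $Q_1(i^{\text{par}})$ the edge $\{y,e_{1,i}\}$ lies in $\mathcal E(i^{\text{par}})$, so $E_5(i^{\text{par}})$ gives $t^\lambda_{\{y,e_{1,i}\}}\ge 1/\sqrt\lambda$ and hence $\tau(y)\le\tau_{1,i}-1/\sqrt\lambda$; but $y\in Q_1(i^{\text{par}})$ forces $\tau(y)\ge\tau_{1,i^{\text{par}}}$, and together with the time budget this yields $1/\sqrt\lambda<2r_1L_1$, contradicting $\lambda<\lambda_{r_1}\le(2r_1L_1)^{-2}$. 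So $e_{1,i}$ is a non-seed occupied by $FPP_1$, or a seed activated by an $FPP_1$-attempt; in either case there is an $FPP_1$-occupied non-seed $a$ (namely $e_{1,i}$ itself, resp.\ the activating neighbour) with $a$ deep in $Q_1(i^{\text{par}})$ and $\tau(a)\le\tau_{1,i}$. Tracing back the chain of $FPP_1$-occupations from the origin to $a$ and invoking $E_1(i^{\text{par}})$–$E_2(i^{\text{par}})$ (so that any non-seed component inside $Q_1(i^{\text{par}})$ other than $\mathcal C_1(i^{\text{par}})$ has size at most $\log^2|Q_1(i^{\text{par}})|$, far less than $L_1/3$), I argue that $a$ cannot lie in such a small component: being connected and deep, that component would sit in the interior of $Q_1(i^{\text{par}})$ and would therefore coincide with the whole $\mathcal G$-component of $a$, hence of the origin, contradicting the standing assumption that the origin's non-seed component is large. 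Thus $a\in\mathcal C_1^-(i^{\text{par}})$, and flooding from $a$ inside $\mathcal C_1^-(i^{\text{par}})$ to $w_0$ along a path of $FPP_1$-passage time $<r_1L_1$ gives $\tau(w_0)\le\tau(a)+r_1L_1\le\tau_{1,i}+r_1L_1$. So $w_0\in\mathcal C_1^-(i)$ is occupied by $FPP_1$ by time $\tau_{1,i}+r_1L_1$, i.e.\ $Q_1(i)$ has positive feedback, as wanted.

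The step I expect to be the main obstacle is this backward-tracing/isoperimetric argument: ruling out that a non-seed which $FPP_1$ occupies deep inside a good box belongs to a logarithmically small component of that box rather than to $\mathcal C_1^-$. Getting it right requires care about the distinction between components computed inside $Q_1(i^{\text{par}})$ (where edges within $\partial Q_1(i^{\text{par}})$ are deleted) and components in $\Z^d$, and about the reduction to the event that the origin lies in the large non-seed cluster of $Q_1(o)$. The remainder is routine bookkeeping; the one quantitative input is that $\lambda<\lambda_{r_1}$ makes $1/\sqrt\lambda$ exceed $2r_1L_1$, which simultaneously drives the contradiction in the $FPP_\lambda$ case and closes the flooding estimate.
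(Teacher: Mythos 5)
Your proposal is correct and follows essentially the same route as the paper's proof: a case split on whether the site that transmitted to $e_{1,i}$ is occupied by $FPP_\lambda$ (ruled out via $E_5(i^{\text{par}})$ and the time budget) or by $FPP_1$ (from which you flood through $\mathcal{C}_1^-(i^{\text{par}})$ to a site of $\mathcal{C}_1^-(i)$ using Fact~\ref{fact} and \eqref{cas1}), stated as a contrapositive rather than as the paper's proof by contradiction. The only substantive difference is that you spell out the $\mathcal{G}$-component argument for why the transmitting $FPP_1$-site must lie in $\mathcal{C}_1^-(i^{\text{par}})$ (small components being strictly interior and hence full $\mathcal{G}$-components, contradicting that the origin's cluster is large), which the paper asserts in one line; your justification of that step is correct and is indeed what the paper implicitly relies on.
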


\begin{proof}
Assume $Q_k(i)$ has negative feedback, so that there is no site within $\mathcal{C}^{-}_1(i)$ occupied by $FPP_1$ at time $\tau_{1,i} + r_1L_1$, and $\lambda\in(0,\lambda_{r_1})$. Suppose for contradiction that $Q_1(i^{\text{par}})$ has positive feedback. Recall that $e_{1,i}$ is the almost surely unique site such that $\tau_{1,i}=\tau(e_{1,i})$ and by definition of parent, $e_{1,i}\in Q_1^{\text{core}}(i^{\text{par}})$. Let $u\notin Q_1(i)$ be the neighbour of $e_{1,i}$ that transmitted either $FPP_1$ or $FPP_{\lambda}$ to $e_{1,i}$ at time $\tau_{1,i}$. 

First consider the case where $u$ is occupied by $FPP_{\lambda}$ and so $e_{1,i}$ is also occupied by $FPP_{\lambda}$. It follows immediately from the definition of good box that
\begin{equation}
\label{pf:prog_1_1}
\tau_{1,i}=\tau(e_{1,i}) \geq \tau(u) + \tfrac{1}{\sqrt{\lambda}} > \tau_{1,i^{\text{par}}} +  \tfrac{1}{\sqrt{\lambda}}.
\end{equation}
As $Q_1(i^{\text{par}})$ is assumed to have positive feedback, there exists some site $z\in\mathcal{C}^{-}_1(i^{\text{par}})$ such that 
$$
\tau(z) < \tau_{1,i^{\text{par}}} + r_1L_1.
$$
Fix a site $z_0\in\mathcal{C}^{-}_1(i)\cap \mathcal{C}^{-}_1(i^{\text{par}})$. By Fact~\ref{fact}, there exists a path $\gamma_0$ from $z$ to $z_0$ such that 
$$
\gamma_0\subset \mathcal{C}^{-}_1(i^{\text{par}}) \quad \text{ and }\quad T_1(\gamma_0) < r_1L_1.
$$
As $Q_1(i^{\text{par}})$ has positive feedback we have that every site on $\gamma_0$ is occupied by $FPP_1$ by~\eqref{cas1} (cf.\ Lemma~\ref{lem:main_lemma_1}). Hence
\begin{equation}
\label{pf:prog_1_2}
\tau_{1,i} \leq \tau(z_0) < \tau(z) + r_1L_1 < \tau_{1,i^{\text{par}}} + 2r_1L_1. 
\end{equation}
As $\lambda\in(0,\lambda_{r_1})$, we have that \eqref{pf:prog_1_1} contradicts \eqref{pf:prog_1_2} and so $u$ is instead occupied by $FPP_1$. As $u$ neighbours a site in $Q_1^{\text{core}}(i^{\text{par}})$ then the distance of $u$ from $\partial Q_1(i^{\text{par}})$ is at least $L_1 / 6$. By definition of good $1$-box we have that $|\mathcal{C}_2(i^{\text{par}})|\leq \log^2|Q_1(i^{\text{par}})|$ and so long as $L_1$ is large enough, it must be the case that $u\in\mathcal{C}^{-}_1(i^{\text{par}})$.

Arbitrarily fix a site $v\in\mathcal{C}^{-}_1(i) \cap\mathcal{C}^{-}_1(i^{\text{par}})$. As $Q_1(i^{\text{par}})$ is good, by Fact~\ref{fact} there exists a path $\gamma_1$ from $u$ to $v$ such that
\begin{equation}
\label{pf:prog_1_3}
\gamma_1\subset \mathcal{C}^{-}_1(i^{\text{par}}) \quad \text{and}\quad T_1(\gamma_1) < r_1L_1. 
\end{equation}
By Lemma~\ref{lem:main_lemma_1}, as $Q_1(i^{\text{par}})$ has positive feedback, every site on $\gamma_1$ is occupied by $FPP_1$. Moreover, by \eqref{pf:prog_1_3} we have
$$
\tau(v)<\tau(u)+r_1L_1<\tau(e_{1,i})+r_1L_1=\tau_{1,i}+r_1L_1,
$$
contradicting the negative feedback of $Q_1(i)$. Thus $Q_1(i^{\text{par}})$ is bad or has negative feedback, establishing the result.
\end{proof}

\subsection{Establishing the fundamental properties at scale 1} \label{sec_control}

As a consequence of Lemma~\ref{lem:main_lemma_1}, it is intuitive that a $1$-box having positive feedback induces fast entrance times to nearby good $1$-boxes due to the large overlap between neighbouring $1$-boxes. We make this notion precise in the following property that gives this control for $FPP_1$.
\begin{equation}
   \label{F_1_1}
   \begin{array}{r}
   \text{If $Q_1(i)$ is good and $Q_1(j)$ has positive feedback}\\
    \text{with $\|i-j\|_{\infty}=1$, then $\tau_{1,i} < \tau_{1,j} + 2r_1L_1$.} \tag{$\text{Fast}_1$}\\
   \end{array}
\end{equation}
In the following lemma we prove that if $\lambda$ is small enough so that \eqref{cas1} holds, then \eqref{F_1_1} holds as well.
\begin{lemma}
\label{lem:fast_1}
If $\lambda\in(0,\lambda_{r_1})$, then \eqref{F_1_1} holds.
\end{lemma}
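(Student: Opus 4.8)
The plan is to argue exactly as in the proof of Lemma~\ref{lem:prog_1}, but now aiming at a positive-feedback conclusion for $Q_1(i)$ rather than a contradiction. Assume $\lambda\in(0,\lambda_{r_1})$, so that \eqref{cas1} holds by Lemma~\ref{lem:main_lemma_1}. Let $Q_1(i)$ be good and let $Q_1(j)$ have positive feedback with $\|i-j\|_\infty=1$. Because $\|i-j\|_\infty=1$ the boxes $Q_1(i)$ and $Q_1(j)$ overlap in a slab of side length at least $L_1/3$, so, provided $L_1$ is large enough (and both $E_1,E_2$ hold for the two boxes), the large non-seed components intersect: there is a site $v\in\mathcal{C}^{-}_1(i)\cap\mathcal{C}^{-}_1(j)$, as was already used in Section~\ref{sec:good_1_boxes} and in the proof of Lemma~\ref{lem:prog_1}.

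The key step is to get $FPP_1$ into $Q_1(i)$ quickly via this shared site $v$. Since $Q_1(j)$ has positive feedback, there is a site $z\in\mathcal{C}^{-}_1(j)$ occupied by $FPP_1$ with $\tau(z)\le\tau_{1,j}+r_1L_1$. By Fact~\ref{fact}, there is a path $\gamma$ from $z$ to $v$ with $\gamma\subset\mathcal{C}^{-}_1(j)$ and $T_1(\gamma)<r_1L_1$. Now invoke \eqref{cas1} for $Q_1(j)$: every site of $\mathcal{C}^{-}_1(j)$ — in particular every site of $\gamma$, and $v$ itself — is occupied by $FPP_1$. Following $\gamma$ from $z$ we obtain
\[
\tau(v) \le \tau(z) + T_1(\gamma) < \tau_{1,j} + 2r_1L_1.
\]
Since $v\in Q_1(i)$, the entrance time of $Q_1(i)$ satisfies $\tau_{1,i}\le\tau(v)<\tau_{1,j}+2r_1L_1$, which is exactly \eqref{F_1_1}.

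The only real subtlety — and the step I would flag as the main obstacle, though it is a mild one — is the geometric claim that $\mathcal{C}^{-}_1(i)\cap\mathcal{C}^{-}_1(j)\neq\varnothing$ for neighbouring boxes: one must check that the large non-seed clusters of the two boxes really coincide on the overlap slab rather than splitting into several pieces. This is handled precisely as in Section~\ref{sec:good_1_boxes}: the events $E_1$ guarantee that $\mathcal{C}_1$ of each box meets every face of every $1$-core it contains with near-maximal density, while $E_2$ forces all other components to be only logarithmically large; since $L_1\ge 5000\log^2 L_1$, the two large clusters must merge inside the overlap, and after deleting the $\partial Q_1(\cdot)$ sites the intersection $\mathcal{C}^{-}_1(i)\cap\mathcal{C}^{-}_1(j)$ is still non-empty. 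Everything else is a direct application of Fact~\ref{fact} and \eqref{cas1}, and the bound $2r_1L_1<1/\sqrt\lambda$ is not even needed here — it was needed only to \emph{establish} \eqref{cas1} in Lemma~\ref{lem:main_lemma_1}.
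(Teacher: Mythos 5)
Your proof is correct and follows essentially the same route as the paper's proof: fix the positive-feedback site $z\in\mathcal{C}^{-}_1(j)$ (the paper's $u$), pick $v\in\mathcal{C}^{-}_1(i)\cap\mathcal{C}^{-}_1(j)$, connect them by the short path from Fact~\ref{fact}, and invoke \eqref{cas1} so that $FPP_1$ traverses the path, yielding $\tau_{1,i}\le\tau(v)<\tau_{1,j}+2r_1L_1$. Your additional remarks on the non-empty intersection of the two large clusters and on the fact that the $\lambda$-condition enters only through Lemma~\ref{lem:main_lemma_1} are accurate and merely make explicit what the paper leaves implicit.
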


\begin{proof}
Let $i,j$ be such that $Q_1(i)$ is good, $Q_1(j)$ has positive feedback and $\|i-j\|_\infty=1$. By definition of positive feedback, there exists a site $u\in\mathcal{C}^{-}_1(j)$ such that $u$ is occupied by $FPP_1$ and $\tau(u)<\tau_{1,j} + r_1L_1$. Arbitrarily fix a site $v\in\mathcal{C}^{-}_1(j)\cap\mathcal{C}^{-}_1(i)$. By Fact~\ref{fact} there exists a path $\gamma$ from $u$ to $v$ such that $\gamma\subset\mathcal{C}^{-}_1(j)$ and $T_1(\gamma)<r_1L_1$. As $\lambda\in(0,\lambda_{r_1})$, by Lemma~\ref{lem:main_lemma_1}, all sites on $\gamma$ are occupied by $FPP_1$ and thus
$$
\tau_{1,i} \leq \tau(v) < \tau(u) + r_1L_1 < \tau_{1,j} + 2r_1L_1.
$$
\end{proof}

Note that in \eqref{F_1_1}, we cannot deduce that $Q_1(i)$ has positive feedback. It may be the case that it was entered much earlier and actually has negative feedback but Lemma~\ref{lem:fast_1} gives us that it is impossible that $Q_1(i)$ is entered significantly later than any positive feedback neighbor.

For any constant $C>0$, define the property:
\begin{equation}
\label{D_1_C}
\text{ If both $Q_1(i)$ and $Q_1(i^{\text{par}})$ have negative feedback, then $\tau_{1,i}>\tau_{1,i^{\text{par}}}+CL_1.$} \tag{$\text{Del}_1^C$}
\end{equation}
In other words, the property \eqref{D_1_C} guarantees that there is at least a \emph{delay} of time at least $CL_1$ in the entrance times of a negative feedback $1$-box and its parent if the parent also has negative feedback. In the lemma below, we prove that for any constant $C>0$, the property \eqref{D_1_C} holds if $\lambda$ is sufficiently small, 
providing us the control on $FPP_{\lambda}$ at scale 1 that we will ultimately require.

\begin{lemma}
\label{lem:del_1}
   For any $C>0$, if $\lambda \in \left(0,\lambda_C\right)$, then \eqref{D_1_C} holds.
\end{lemma}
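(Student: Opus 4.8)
The plan is to argue by contradiction in the same spirit as the proofs of Lemma~\ref{lem:main_lemma_1}, Lemma~\ref{lem:prog_1} and Lemma~\ref{lem:fast_1}, but this time extracting a \emph{lower} bound on the entrance time $\tau_{1,i}$ of a negative feedback box in terms of $\tau_{1,i^{\text{par}}}$. Suppose $Q_1(i)$ and $Q_1(i^{\text{par}})$ both have negative feedback and, for contradiction, that $\tau_{1,i}\leq\tau_{1,i^{\text{par}}}+CL_1$. Recall $e_{1,i}$ is the a.s.\ unique site with $\tau(e_{1,i})=\tau_{1,i}$, and by definition of parent $e_{1,i}\in Q_1^{\text{core}}(i^{\text{par}})$, so $e_{1,i}$ is at distance at least $L_1/6$ from $\partial Q_1(i^{\text{par}})$. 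Let $u\notin Q_1(i)$ be the neighbour of $e_{1,i}$ that transmitted $FPP_1$ or $FPP_\lambda$ into $e_{1,i}$ at time $\tau_{1,i}$; note $\tau(u)\leq\tau_{1,i}\leq\tau_{1,i^{\text{par}}}+CL_1$ and, as in the proof of Lemma~\ref{lem:prog_1}, since $u$ is deep inside $Q_1(i^{\text{par}})$ and $|\mathcal{C}_2(i^{\text{par}})|\leq\log^2 L_1$ is tiny, for $L_1$ large $u$ must lie in $\mathcal{C}^{-}_1(i^{\text{par}})$ (so in particular $u$ is not a seed, and in fact $e_{1,i}$ is not a seed either).

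Now I split into the two cases according to which process occupies $u$. If $u$ (hence $e_{1,i}$) is occupied by $FPP_1$, then fix $v\in\mathcal{C}^{-}_1(i)\cap\mathcal{C}^{-}_1(i^{\text{par}})$; by Fact~\ref{fact} there is a path $\gamma\subset\mathcal{C}^{-}_1(i^{\text{par}})$ from $u$ to $v$ with $T_1(\gamma)<r_1L_1$. But $\gamma$ lies in $\mathcal{C}^{-}_1(i^{\text{par}})$ and we cannot invoke \eqref{cas1} here because $Q_1(i^{\text{par}})$ has \emph{negative} feedback — so instead I argue directly: $FPP_1$ reaches $u$ by time $\tau_{1,i}$, and then crosses $\gamma$ in time $<r_1L_1$; we must check every site of $\gamma$ is still free when $FPP_1$ attempts it, or else $FPP_\lambda$ would have occupied a site of $\mathcal{C}^{-}_1(i)$. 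If some site of $\gamma$ is occupied by $FPP_\lambda$ first, take $w\in\mathcal{C}^{-}_1(i^{\text{par}})$ the first such site; then $\tau(w)\geq\tau_{1,i^{\text{par}}}+\tfrac1{\sqrt\lambda}$ by the good-box property, while $\tau(w)\leq\tau(u)+r_1L_1\leq\tau_{1,i^{\text{par}}}+CL_1+r_1L_1$, contradicting $\lambda<\lambda_C$ (for which $(C+r_1)L_1<\tfrac1{\sqrt\lambda}$). Hence every site of $\gamma$, in particular $v\in\mathcal{C}^{-}_1(i)$, is occupied by $FPP_1$ by time $\tau(u)+r_1L_1\leq\tau_{1,i^{\text{par}}}+(C+r_1)L_1$. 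Choosing $C$ large enough relative to $r_1$ — wait, that is circular; rather, the point is that $\tau(v)\leq\tau_{1,i}+r_1L_1$, which says $Q_1(i)$ has positive feedback (take the witnessing site $v\in\mathcal{C}^{-}_1(i)$ occupied by $FPP_1$ with $\tau(v)\leq\tau_{1,i}+r_1L_1$), contradicting the assumption that $Q_1(i)$ has negative feedback. So this case cannot occur regardless of $C$.

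Therefore $u$, and hence $e_{1,i}$, is occupied by $FPP_\lambda$. Since $e_{1,i}\in\mathcal{C}^{-}_1(i^{\text{par}})$ is a non-seed occupied by $FPP_\lambda$, and $FPP_\lambda$ must traverse at least one edge with both endpoints in $Q_1(i^{\text{par}})$ to reach it, the good-box property $E_5(i^{\text{par}})$ gives
\begin{equation*}
\tau_{1,i}=\tau(e_{1,i})\geq\tau_{1,i^{\text{par}}}+\tfrac{1}{\sqrt{\lambda}}.
\end{equation*}
Now fix $C>0$ arbitrary and take $\lambda<\lambda_C$, so that by the definition \eqref{def:lambda_x} of $\lambda_C$ we have $\tfrac1{\sqrt\lambda}\geq(C+r_1)L_1>CL_1$. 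Then $\tau_{1,i}\geq\tau_{1,i^{\text{par}}}+\tfrac1{\sqrt\lambda}>\tau_{1,i^{\text{par}}}+CL_1$, which contradicts $\tau_{1,i}\leq\tau_{1,i^{\text{par}}}+CL_1$ and establishes \eqref{D_1_C}.

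\textbf{Main obstacle.} The delicate point is the $FPP_1$-case above: one needs to rule out that a negative feedback parent can nonetheless push $FPP_1$ quickly into $Q_1(i)$, and the cleanest way is to observe that this would force $Q_1(i)$ itself to satisfy the positive feedback criterion (a witness site in $\mathcal{C}^{-}_1(i)$ occupied by $FPP_1$ within $r_1L_1$ of $\tau_{1,i}$), contradicting its assumed negative feedback — independently of $C$. Care is also needed to ensure $u\in\mathcal{C}^{-}_1(i^{\text{par}})$ (using $|\mathcal{C}_2(i^{\text{par}})|\leq\log^2L_1$ and $L_1\geq5000\log^2L_1$), and to track that we only ever need $\lambda$ small enough that $\tfrac1{\sqrt\lambda}$ dominates $(C+r_1)L_1$, which is exactly what $\lambda_C$ encodes.
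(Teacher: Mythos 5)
Your proof is correct and follows essentially the same approach as the paper's: the same case split on whether the transmitting site $u$ carries $FPP_1$ or $FPP_\lambda$, with the $E_5$ bound $\tfrac{1}{\sqrt{\lambda}}$ overpowering the $r_1L_1$ estimate from Fact~\ref{fact}; you phrase it by contradiction while the paper argues directly, but the content is the same. One small inaccuracy worth noting: you assert $u\in\mathcal{C}^{-}_1(i^{\text{par}})$ (and that $e_{1,i}$ is not a seed) \emph{before} the case split, but that deduction — tracing the occupation path and invoking $|\mathcal{C}_2(i^{\text{par}})|\leq\log^2 L_1$ — is only available when $u$ is occupied by $FPP_1$; if $u$ carries $FPP_\lambda$, both $u$ and $e_{1,i}$ could be activated seeds and hence not in $\mathcal{C}^{-}_1(i^{\text{par}})$ at all. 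The slip is harmless, since in the $FPP_\lambda$ case all you actually need is that the edge between $u$ and $e_{1,i}$ has both endpoints in $Q_1(i^{\text{par}})$ — true because $e_{1,i}\in Q_1^{\text{core}}(i^{\text{par}})$ and $u$ neighbours it — so $E_5(i^{\text{par}})$ applies irrespective of seed status, exactly as in the paper.
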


\begin{proof}
Suppose that $Q_1(i)$ and $Q_1(i^\text{par})$ both have negative feedback and $\lambda\in(0,\lambda_C)$. Recall that $e_{1,i}$ is the almost surely unique site such that $\tau(e_{1,i})=\tau_{1,i}$. By definition of parent, we have
$$
e_{1,i}\in Q_1^{\text{core}}(i^{\text{par}})\subset Q_1(i).
$$
Let $u\notin Q_1(i)$ be the neighbouring site of $e_{1,i}$ that transmitted either $FPP_1$ or $FPP_{\lambda}$ to $e_{1,i}$ at time $\tau_{1,i}$. First note that by construction $u$ is occupied by either $FPP_1$ or $FPP_{\lambda}$ before $Q_1(i)$ is first entered by either process and so
\begin{equation}
\label{pf:del_1_1}
\tau_{1,i} > \tau(u).
\end{equation}
If $u$ was occupied by $FPP_{\lambda}$, then it follows from the definition of good $1$-box that
$$
\tau_{1,i}-\tau_{1,i^{\text{par}}}\geq \tfrac{1}{\sqrt{\lambda}}>CL_1, 
$$
where the final inequality follows as $\lambda\in(0,\lambda_C)$.

Suppose instead that $u$ was occupied by the $FPP_1$ process and in this case, by construction of good $1$-box and parent, we have that $u\in\mathcal{C}^{-}_1(i^{\text{par}})$. Arbitrarily fix a site $v \in \mathcal{C}^{-}_1(i)\cap\mathcal{C}^{-}_1(i^{\text{par}})$. By Fact~\ref{fact}, there exists a path $\gamma$ from $u$ to $v$ such that
$$
\gamma \subset \mathcal{C}^{-}_1(i^{\text{par}})\quad\text{and}\quad T_1(\gamma) < r_1 L_1.
$$
By definition of $Q_1(i)$ having negative feedback, there is no site in $\mathcal{C}^{-}_1(i)$ occupied by $FPP_1$ at time $\tau_{1,i}+r_1L_1$. However, if every site on $\gamma$ was occupied by $FPP_1$, then Fact~\ref{fact} would imply that $\tau(v)<\tau_{1,i} + r_1L_1$, giving that $Q_1(i)$ has positive feedback, which is a contradiction. Thus there exists a site $v_0\in\gamma$ that is occupied by $FPP_{\lambda}$ with 
\begin{equation}
\label{pf:del_1_2}
\tau(v_0) < \tau(u) + r_1L_1.
\end{equation}
By definition of good $1$-box and as $v_0$ is occupied by $FPP_{\lambda}$, we have
\begin{equation}
\label{pf:del_1_3}
\tau(v_0)\geq\tau_{1,i^{\text{par}}}+\tfrac{1}{\sqrt{\lambda}}.
\end{equation}
Putting together \eqref{pf:del_1_1}, \eqref{pf:del_1_2} and \eqref{pf:del_1_3}, we observe that
$$
\tau_{1,i}+r_1L_1>\tau(u)+r_1L_1 > \tau(v_0)\geq\tau_{1,i^{\text{par}}}+\tfrac{1}{\sqrt{\lambda}}.
$$
By rearranging the above equation, we see that
\begin{align*}
\tau_{1,i} - \tau_{1,i^{\text{par}}} \geq \tfrac{1}{\sqrt{\lambda}} -r_1L_1 > CL_1,
\end{align*}
where the final inequality holds as $\lambda\in(0,\lambda_C)$.
\end{proof}

\subsection{Definition of clusters and positive feedback at higher scales}
\label{sec:positivefeedback}
In this section we define positive feedback at higher scales. For this we must make a precise notion of what it means for a good box to be well-separated from bad boxes. In this direction we introduce the idea of \emph{clusters} of $(k-1)$-boxes in a $k$-box.

\begin{definition}[Clusters]
   Let $k\geq2$ and consider the $k$-box $Q_{k}(i)$. We will define the clusters of $Q_{k}(i)$, 
   which will be subsets of the $(k-1)$-boxes contained in $Q_{k}(i)$. Start by taking the set of all $(k-1)$-boxes that have a non-empty intersection with the \emph{bad} $(k-1)$-boxes from $Q_{k}(i)$. 
   Note that the bad $(k-1)$-boxes are contained in this set. Next, add to this set all the $(k-1)$-boxes that are disconnected from infinity by it. This new set may not be connected, 
   but it can be naturally split into its connected components, which we refer to as the \emph{clusters} of $Q_{k}(i)$. Let $\mathcal{A}$ be such a cluster. We define the \emph{outer-boundary} of $\mathcal{A}$, 
   denoted by $\partial \mathcal{A}$, as the set of $(k-1)$-boxes $Q_{k-1}(\ell)\not\in\mathcal{A}$ for which there exists $Q_{k-1}(\iota)\in\mathcal{A}$ with $\|\ell-\iota\|_{\infty}=1$. 
   We will abuse notation and sometimes will let $\partial\mathcal{A}$ refer to the vertices contained in the $(k-1)$-boxes in $\partial\mathcal{A}$, but the meaning will be clear from the context.  See Figure~\ref{fig:clusters} for an illustration of clusters.
\end{definition}

Fix $k\geq2$ and suppose $Q_k(i)$ is a good $k$-box with cluster $\mathcal{A}$. As there are at most $A$ disjoint bad $(k-1)$-boxes in $Q_k(i)$, with the $d$-dimensional isoperimetric inequality we deduce that there exists a constant $\sigma=\sigma(A,d)\in(0,\infty)$ such that 
\begin{align*}
\text{the number of $(k-1)$-boxes contained in }\mathcal{A}\cup\partial\mathcal{A}\leq \sigma.
\end{align*}
Moreover, since there are at most $A$ clusters in $Q_k(i)$, then the total number of $(k-1)$-boxes that are contained in some cluster of $Q_k(i)$ is bounded above by $A\sigma$, which is a constant that only depends on $A$ and $d$. 

With the definition of clusters of a $k$-box to hand, we have a notion of bad $(k-1)$-boxes in a good $k$-box being \emph{well-separated} from one another; in particular, boxes that belong to different clusters are sufficiently far apart. It will be useful to also have a global notion of well-separated, meaning that a site is sufficiently far from bad boxes at all scales. We make this notion precise in the following definition.

\begin{definition}[Flawless]
   \label{def:flawless}
   Let $x\in\mathbb{Z}^d$, and define $i$ so that $x\in Q_1^\text{core}(i)$.  
   Define $x$ to be \emph{flawless} if it is not contained in a cluster of a box at any scale and is contained in $\mathcal{C}_1(i)$; recall the definition of $\mathcal{C}_1(i)$ from the 
   beginning of Section~\ref{sec:good_1_boxes}.
\end{definition}

Our main results concern certain events occurring with positive probability, namely survival of $FPP_1$ in Theorem~\ref{maintheorem}, and survival of $FPP_1$ and non-survival of $FPP_{\lambda}$ in Theorem~\ref{thm:strongsurvival}. It transpires that the origin being flawless is the specific event we utilise to prove these results. In the following lemma, we prove that such an event occurs with positive probability for a large enough choice of $L_1$ and small enough choice of $\lambda$.

\begin{lemma}
\label{lem:flawless}
Let $x\in\mathbb{Z}^d$ and $A>d$. For a large enough choice of $L_1$ and then small enough choice of $\lambda$, we have that $x$ is flawless with positive probability.
\end{lemma}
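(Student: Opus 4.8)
The plan is to show that the event ``$x$ is flawless'' contains an intersection of events of the form ``$Q_k(i_k)$ is good for all $k\geq 1$'' (for the appropriate nested boxes containing $x$) together with the event that $x\in\mathcal{C}_1(i)$, and to bound the probability of the complementary events using Lemma~\ref{lem:prob_good_k_box}. By translation invariance it suffices to treat a fixed $x$; write $i=i(k)$ for the index with $x\in Q_1^{\text{core}}(i)$ at scale $1$, and more generally pick for each $k$ a $k$-box $Q_k(j_k)$ whose core contains $x$. The key observation is that if $x$ belongs to a cluster of some $k$-box $Q_k(\ell)$, then in particular $Q_k(\ell)$ contains a bad $(k-1)$-box within distance $O(\sigma L_{k-1})$ of $x$; hence $x$ being non-flawless forces, for some scale $k$, the existence of a bad $(k-1)$-box near $x$, or the event $x\notin\mathcal{C}_1(i)$.

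First I would make the geometric reduction precise. Fix $k\geq 2$. If $x$ lies in a cluster of a $k$-box, then by the definition of clusters that cluster is built from bad $(k-1)$-boxes together with the $(k-1)$-boxes they disconnect from infinity; using the isoperimetric bound (the constant $\sigma=\sigma(A,d)$ already recorded in the text), every box in such a cluster lies within $\ell_\infty$-distance at most $\sigma L_{k-1}$ of a bad $(k-1)$-box. So the event ``$x$ is in a cluster at scale $k$'' is contained in the event $B_{k-1}(x):=\{$there is a bad $(k-1)$-box whose core lies within distance $\sigma L_{k-1}$ of $x\}$. Likewise ``$x$ in a cluster at scale $1$'' — interpreting the definition with $\mathcal{C}_1(i)$ playing the role of the flawless condition at the bottom scale — is contained in $B_1(x)\cup\{x\notin\mathcal{C}_1(i)\}$ for a suitable neighbourhood. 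Therefore
\[
\{x\text{ not flawless}\}\subseteq\{x\notin\mathcal{C}_1(i)\}\ \cup\ \bigcup_{m\geq 1}B_m(x).
\]

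Next I would estimate each term. The number of $m$-boxes whose core is within distance $\sigma L_m$ of $x$ is bounded by a constant depending only on $\sigma$ and $d$ (the cores have side $L_m/3$, so the count is $(6\sigma+O(1))^d$), so by a union bound and Lemma~\ref{lem:prob_good_k_box},
\[
\mathbb{P}(B_m(x))\ \leq\ C(\sigma,d)\,\rho_m\ \leq\ C(\sigma,d)\,\rho_1^{A^{m-1}}.
\]
Since $A>d>1$ and Lemma~\ref{lem:prob_good_1_box} lets us make $\rho_1$ as small as we like (take $\rho_1<1/2$, say, by enlarging $L_1$ and shrinking $\lambda$), the series $\sum_{m\geq 1}\mathbb{P}(B_m(x))$ converges and, after possibly enlarging $L_1$ further, is strictly less than, say, $1/4$. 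For the remaining term, $\mathbb{P}(x\notin\mathcal{C}_1(i))\leq \mathbb{P}(G_1(i)^{\mathrm c})+\mathbb{P}(x\notin\mathcal{C}_1(i),\,G_1(i))$; on $G_1(i)$ the event $E_1(i)$ holds, which forces $\mathcal{C}_1(i)$ to touch all faces of every core in $Q_1(i)$ and in particular to be the (unique) giant non-seed cluster, so $x\notin\mathcal{C}_1(i)$ only if $x$ is a seed or sits in one of the logarithmically small clusters controlled by $E_2(i)$; in any case this probability is bounded by $p+o(1)$ as $L_1\to\infty$, uniformly. Hmm — this last bound is not small, which is the real subtlety: we cannot make $\mathbb{P}(x\in\mathcal{C}_1(i))$ close to $1$, only close to $\theta(1-p)>0$. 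So the right statement is $\mathbb{P}(x\notin\mathcal{C}_1(i))\leq 1-\theta(1-p)+\delta$ for any $\delta>0$ once $L_1$ is large (this is exactly what $E_1,E_2$ give, via the Deuschel--Pisztora and Penrose--Pisztora estimates), and then
\[
\mathbb{P}(x\text{ not flawless})\ \leq\ \big(1-\theta(1-p)+\delta\big)+\tfrac14\ <\ 1
\]
provided $\theta(1-p)>\delta+\tfrac14$; since $\theta(1-p)$ is a fixed positive constant we first fix $\delta$ small, then choose $L_1$ large enough (and $\lambda$ small enough) that all the above estimates hold, which is possible. Hence $\mathbb{P}(x\text{ flawless})>0$.

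The main obstacle is precisely the bottom-scale term: unlike the higher-scale good-box probabilities, which can be pushed arbitrarily close to $1$, the probability that $x$ lands in the giant non-seed cluster is bounded away from $1$ by the density of seeds. The fix is to budget correctly — reserve a definite fraction (here $1/4$, but any constant below $\theta(1-p)$ works) for the tail of bad-box events across all scales, and use that $\theta(1-p)$ is a strictly positive constant not depending on $L_1$ or $\lambda$, so that the geometric decay $\rho_1^{A^{m-1}}$ from Lemma~\ref{lem:prob_good_k_box} can absorb everything else. A secondary technical point is checking that the ``$x$ in a cluster at scale $k$'' event really is contained in ``a bad $(k-1)$-box near $x$'': one must verify that a cluster containing $x$ cannot be so large that it reaches $x$ from a far-away bad box — but this is exactly what the isoperimetric bound giving the constant $\sigma$ (already established in the text right before Definition~\ref{def:flawless}) rules out, since a cluster around at most $A$ disjoint bad $(k-1)$-boxes has at most $\sigma$ boxes total and hence bounded diameter in units of $L_{k-1}$.
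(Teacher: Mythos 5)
Your proposal is correct and takes essentially the same approach as the paper's proof: decompose $\{x\text{ not flawless}\}$ into the event $\{x\notin\mathcal{C}_1(i)\}$ together with a union over scales of ``bad box near $x$'' events, bound the latter by $O(\sum_k\rho_k)$ via Lemma~\ref{lem:prob_good_k_box}, and budget the former against the fixed positive margin $\theta(1-p)$. You correctly identify the key subtlety (that the bottom-scale term can only be pushed to $1-\theta(1-p)+\delta$, not to $0$), which the paper handles the same way, though its displayed intermediate bound via $\mathbb{P}(x\notin\mathcal{G})$ is stated a bit loosely compared to your more careful treatment.
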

\begin{proof}
	Fix $x\in\mathbb{Z}^d$ and for $k\geq1$, let $\mathfrak{Q}_k^x$ be the collection of $k$-boxes that contain $x$. 
	Let $i_0\in\Z^d$ be so that $x\in Q_1^\text{core}(i_0)$. We may write the event that $x$ is not flawless as
	\begin{align}
   	\label{eq:not_flawless}
   	\{x\text{ is not flawless}\} = \bigcup_{k\geq1}\bigcup_{Q_{k}(i)\in\mathfrak{Q}_k^x}H_k(i)\cup\{x\notin\mathcal{C}_1(i_0)\},
	\end{align}
	where $H_k(i)$ is the event that $Q_k(i)$ is in a cluster of some box in $\mathfrak{Q}_{k+1}^x$.
	As each cluster in a good $(k+1)$-box contains no more than $\sigma$ $k$-boxes, we deduce that
	\begin{align}
	\label{eq:H_k}
	\mathbb{P}\left(H_k(i)\right) &\leq \mathbb{P}\left(
\mbox{there exists a bad }k\mbox{-box }Q_k(j)\mbox{ such that }\|i-j\|_{\infty}\leq 10\sigma\right)\leq c\rho_k,
	\end{align}
	where $c=c(A,d)>0$ and $\rho_k$ is the probability that an arbitrary $k$-box is bad. 
	Note that 
	\begin{align}
	\mathbb{P}\left(\bigcap_{k\geq1}\bigcap_{Q_{k}(i)\in\mathfrak{Q}_k^x}H_k^\text{c}(i)\cap \{x\notin\mathcal{C}_1(i_0)\}\right) &\leq \mathbb{P}\left(x\notin\mathcal{G}\right)
	\leq 1-\theta(1-p).
	\end{align}
	
	From \eqref{eq:not_flawless}, \eqref{eq:H_k} and the union bound, we have
	\begin{align}
	\mathbb{P}\left(x \text{ is not flawless}\right) &\leq 1-\theta(1-p)+\sum_{k\geq1}\sum_{Q_k(i)\in\mathfrak{Q}_k^x} c\rho_k,\nonumber\\
	&\leq 1-\theta(1-p) + c'\sum_{k\geq1}\rho_k, 
	\label{pf:flawless1}
	\end{align}
	where $c'=c'(A,d)>0$ and $\theta(1-p)>0$ as $p<1-p_c^{\text{site}}$. By recalling Lemma~\ref{lem:prob_good_k_box}, the sum in \eqref{pf:flawless1} can be made arbitrarily small by setting $\rho_1$ small enough in Lemma~\ref{lem:prob_good_1_box}, establishing the result.
\end{proof}

As the choice of $L_1$ and $\lambda$ in Lemma~\ref{lem:flawless} only depend on $A$, $d$ and $p$, we may add this condition to how we set $L_1$ and $\lambda$ in Section~\ref{sec:setL_1}.

There is an issue of a cluster being too close to the boundary of a $k$-box, as it may be part of a much larger cluster outside of that $k$-box. 
To address this, we introduce the notion of \emph{boundary clusters}.

\begin{definition}[Boundary and inner clusters]
\label{def:boundary_cluster}
Let $k\geq2$, $Q_{k}(i)$ be good and $\mathcal{A}$ be a cluster in $Q_{k}(i)$. We define $\mathcal{A}$ to be a \emph{boundary cluster} in $Q_{k}(i)$ if the outer-boundary of $\mathcal{A}$ has a non-empty intersection with the boundary of $Q_{k}(i)$, so that
$$
\partial\mathcal{A}\cap \partial Q_{k}(i) \neq \emptyset.
$$
Otherwise, we define $\mathcal{A}$ to be an \emph{inner cluster} in $Q_{k}(i)$. 
\end{definition}

In Figure~\ref{fig:clusters}, the two uppermost clusters intersect the boundary clusters and the rest are inner clusters.

We let $\mathcal{W}(k,i)$ be the set of $(k-1)$-boxes in a good $k$-box $Q_{k}(i)$ that are not contained in any clusters and call such $(k-1)$-boxes \emph{wonderful}. In other words, if $Q_{k}(i)$ is good and $\mathcal{A}_1,\mathcal{A}_2,\ldots,\mathcal{A}_{n}$ are all the clusters of bad $(k-1)$-boxes in $Q_{k}(i)$, then
$$
\mathcal{W}(k,i) = \left\{Q_{k-1}(\ell):Q_{k-1}(\ell)\subset Q_{k}(i) \text{ and } Q_{k-1}(\ell)\notin \bigcup_{j=1}^{n}\mathcal{A}_j\right\}.
$$
Before we give the next definition, it will be useful to also define a collection of boxes as the following. For all $k\geq1$, let
\begin{align}
\left\{Q^{\text{inn}}_k\left(i\right)\right\}_{i\in\mathbb{Z}^d} \text{ with } Q^{\text{inn}}_k\left(i\right) = \left(L_k/3\right)i + \left[-499L_k/1000,499L_k/1000\right]^d, \label{def:inner_part}
\end{align}
where we refer to $Q^{\text{inn}}_k\left(i\right)$ as the \emph{inner part} of $Q_k(i)$. In some cases we will abuse notation and instead mean that $Q_k^{\text{inn}}(i)$ refers to all the $(k-1)$-boxes contained in the set given by \eqref{def:inner_part} rather than the sites but the context will always leave this as unambiguous. Intuitively, the inner part of a $k$-box contains all sites that are sufficiently far from the boundary of the $k$-box. 
Indeed, so long as the distance of sites in the inner part is of the order $L_k$ from the boundary, our analysis will work and so the choice of $L_{k}/1000$ is purely for concreteness.

\begin{definition}[$\mathcal{W}^{\text{inn}}(k,i)$]
\label{def:W_inn}
Let $k\geq2$ and $Q_{k}(i)$ be good. Let $\mathfrak{A}(k,i)$ be the collection of all clusters in $Q_{k}(i)$ whose outer-boundary has a non-empty intersection with $Q_{k}^{\text{inn}}(i)$, so that if $\mathcal{A}\in\mathfrak{A}(k,i)$, then 
$$
\partial \mathcal{A} \cap Q_{k}^{\text{inn}}(i) \neq \emptyset.
$$
Define $\mathcal{W}^{\text{inn}}(k,i)$ to be the union of wonderful $(k-1)$-boxes in $Q_{k}^{\text{inn}}(i)$ along with the outer-boundary of clusters in $\mathfrak{A}(k,i)$, so that
$$
\mathcal{W}^{\text{inn}}(k,i) = \left(\mathcal{W}(k,i)\cap Q_k^{\text{inn}}(i)\right)\cup\bigcup_{\mathcal{A}\in\mathfrak{A}(k,i)}\partial \mathcal{A}.
$$
\end{definition}

\begin{figure}[!htb]
	\center{\includegraphics[width=0.5\textwidth]{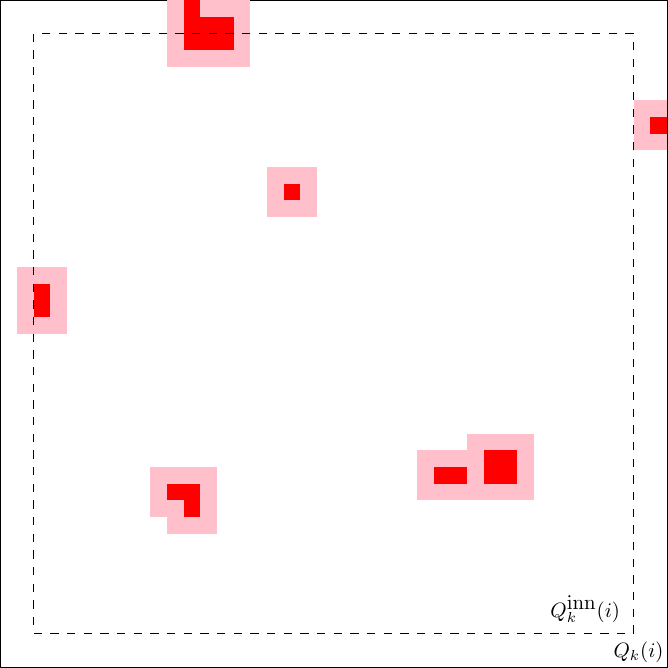}}
   \caption{A $k$-box $Q_k(i)$ with clusters given in red and their respective outer boundary in pink.}
	\label{fig:clusters} 
\end{figure}

It transpires that $\mathcal{W}^{\text{inn}}(k,i)$ is what we choose to replace $C^{-}_1(i)$ at higher scales. Roughly speaking, we will tune the definition of positive feedback at higher scale so that if there is a $(k-1)$-box with positive feedback in $\mathcal{W}^{\text{inn}}(k,i)$ at a relatively early time compared to when either $FPP_1$ or $FPP_{\lambda}$ first enters $Q_k(i)$, then all $(k-1)$-boxes in $\mathcal{W}^{\text{inn}}(k,i)$ will have positive feedback. This is made rigorous later in Lemma~\ref{lem:cascading}.

\begin{remark}
\label{rem:completion}
It is worth examining why we choose to define $\mathcal{W}^{\text{inn}}(k,i)$ as we did in Definition~\ref{def:W_inn} rather than just setting $\mathcal{W}^{\text{inn}}(k,i)$ to be $\mathcal{W}(k,i)\cap Q_k^{\text{inn}}(i)$. The issue is that $\mathcal{W}(k,i)\cap Q_k^{\text{inn}}(i)$ may not be a single connected component of wonderful $(k-1)$-boxes in $Q_k(i)$. It will be essential in later proofs that we can find paths of $(k-1)$-boxes through $\mathcal{W}^{\text{inn}}(k,i)$. To be explicit, for $k\geq1$, we say that $\pi_{k}=\{Q_k(i_1),\ldots,Q_k(i_n)\}$ is a \emph{path of $k$-boxes} if $\|i_{\ell+1}-i_{\ell}\|_{\infty}=1$ for all $1\leq \ell<n$ and write $|\pi_{k}|=n$. If $\mathcal{W}^{\text{inn}}(k,i)$ is not a single connected component, then this may not be possible without exiting $\mathcal{W}^{\text{inn}}(k,i)$. By including the outer boundaries of any clusters that intersect $Q_k^{\text{inn}}(i)$, we guarantee that $\mathcal{W}^{\text{inn}}(k,i)$ is a single connected component. Hence, if 
$$
Q_{k-1}(j_0),Q_{k-1}(j_1)\in\mathcal{W}^{\text{inn}}(k,i),
$$
then there exists a path $\pi_{k-1}$ of $(k-1)$-boxes from $Q_{k-1}(j_0)$ to $Q_{k-1}(j_1)$ such that
$$
\pi_{k-1}\subset\mathcal{W}^{\text{inn}}(k,i)\quad\mbox{and}\quad|\pi_{k-1}| \leq \tfrac{3dL_{k}}{L_{k-1}} + A\sigma.
$$
\end{remark}

\begin{definition}[Positive feedback at higher scales] 
We define positive and negative feedback at higher scales inductively. Let $k\geq2$, suppose $Q_k(i)$ is good, and positive and negative feedback are defined at scale $k-1$. Define
\begin{equation}
\label{def:r_k}
r_{k} = r_1\prod_{j=2}^{k}\left(1+\tfrac{a_1}{j^2L_{j-1}^{d-1}}\right),
\end{equation}
where $r_1$ is as given in Fact~\ref{fact} and $a_1=\tfrac{1000A\sigma}{3}$. Note that $a_1$ depends only on $A$ and $d$. Recall the definition of entrance time for a $k$-box as given in \eqref{def:entrance_time}. If there exists $Q_{k-1}(\ell)\in\mathcal{W}^{\text{inn}}(k,i)$ with positive feedback such that
$$
\tau_{k-1,\ell} < \tau_{k,i} + \tfrac{3}{500}r_kL_k,
$$
then we define $Q_k(i)$ to have \emph{positive feedback}. If $Q_k(i)$ does not have positive feedback, then we define it to have \emph{negative feedback}.
\end{definition}

\begin{remark}
\label{rem:r}
The constant $r_k$ is the analogue of $r_1$ at scale $k$ and \eqref{def:r_k} is justified when we generalise \eqref{F_1_1} to higher scales. From \eqref{def:r_k}, we deduce that $\{r_k\}_{k\geq1}$ is an increasing sequence such that
$$
r = \lim_{k\to\infty}r_k<\infty.
$$
Moreover, so long as $L_1$ is sufficiently large, then $r<2r_1$. 
We assume this is the case, and note that this constraint for $L_1$ depends on $a_1$, which only depends on $A$ and $d$.
Hence, we can add this constraint to how we set $L_1$ in Section~\ref{sec:setL_1}.
\end{remark}

\subsection{Fundamental properties at higher scales}
\label{sec:fprophigh}

In this section we generalise the fundamental properties \eqref{cas1}, \eqref{F_1_1} and \eqref{D_1_C} to higher scales; recall that 
we have already defined the fundamental property \eqref{progk} at all scales.  
We will also introduce a fifth fundamental property that will give information about how $FPP_{\lambda}$ spreads through clusters in a good box.

Recall that \eqref{cas1} states that all sites in the large component of non-seeds in a good $1$-box away from the boundary are occupied by $FPP_1$ and that this property follows from positive feedback by setting $\lambda$ small enough, as stated in Lemma~\ref{lem:main_lemma_1}. 
The higher scale version of \eqref{cas1} is given below. 
For $k\geq2$ define the following property:
\begin{equation}
\label{cask}
\begin{array}{l}
\text{If $Q_{k}(i)$ has positive feedback, then for all $i_0$ such that $Q_{k-1}(i_0)\in \mathcal{W}^{\text{inn}}(k,i)$},\\
\text{we have that $Q_{k-1}(i_0)$ has positive feedback.} \tag{$\text{CAS}_k$}\\
\end{array}
\end{equation}

This section contains two main results: the first one is stated below, which gives that 
by setting $\lambda$ small enough, we have that \eqref{cask} holds for all $k\geq2$. 
The second main result will be given later, in Lemma~\ref{lem:induction1}.
\begin{lemma}
\label{lem:cascading}
There exists a constant $C^*>0$ such that if $\lambda\in(0,\lambda_{C^*})$, then \eqref{cask} holds for all $k\geq1$.
\end{lemma}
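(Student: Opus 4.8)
The plan is to prove Lemma~\ref{lem:cascading} by induction on $k$, using Lemma~\ref{lem:main_lemma_1} as the base case ($k=1$). Assume \eqref{cask} holds for all scales up to $k-1$; by Lemma~\ref{lem:induction1} (the second main result of this subsection, which we may invoke here), this inductive hypothesis also gives us \eqref{progk}, \eqref{F_k}, \eqref{D_k^C} and the fifth fundamental property at all scales $\leq k-1$. Now suppose $Q_k(i)$ has positive feedback and fix some $Q_{k-1}(i_0)\in\mathcal{W}^{\text{inn}}(k,i)$; the goal is to show $Q_{k-1}(i_0)$ has positive feedback. Since $Q_k(i)$ has positive feedback, there exists $Q_{k-1}(\ell)\in\mathcal{W}^{\text{inn}}(k,i)$ with positive feedback satisfying $\tau_{k-1,\ell} < \tau_{k,i} + \tfrac{3}{500}r_kL_k$. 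By Remark~\ref{rem:completion}, $\mathcal{W}^{\text{inn}}(k,i)$ is connected as a collection of $(k-1)$-boxes, so there is a path $\pi_{k-1}$ of $(k-1)$-boxes from $Q_{k-1}(\ell)$ to $Q_{k-1}(i_0)$ with $\pi_{k-1}\subset\mathcal{W}^{\text{inn}}(k,i)$ and $|\pi_{k-1}|\leq \tfrac{3dL_k}{L_{k-1}}+A\sigma$.

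The core of the argument is to propagate positive feedback along this path, box by box, using \eqref{F_k} (the scale-$(k-1)$ version of \eqref{F_1_1}) to control the accumulation of entrance-time delay, and then argue by contradiction: if some box on the path has negative feedback, let $Q_{k-1}(j^*)$ be the first such box along $\pi_{k-1}$ (starting from $Q_{k-1}(\ell)$). Its predecessor $Q_{k-1}(j^*-1)$ on the path has positive feedback, and the two are $\|\cdot\|_\infty$-neighbours, so \eqref{F_k} bounds $\tau_{k-1,j^*}$ by $\tau_{k-1,j^*-1}+2r_{k-1}L_{k-1}$. Iterating \eqref{F_k} back along the positive-feedback prefix of the path from $Q_{k-1}(\ell)$, each step adds at most $2r_{k-1}L_{k-1}$ to the entrance time, so
\begin{equation*}
\tau_{k-1,j^*} \leq \tau_{k-1,\ell} + 2r_{k-1}L_{k-1}\left(\tfrac{3dL_k}{L_{k-1}}+A\sigma\right) < \tau_{k,i} + \tfrac{3}{500}r_kL_k + 2r_{k-1}\left(3dL_k + A\sigma L_{k-1}\right).
\end{equation*}
One then needs to check that the right-hand side is at most $\tau_{k,i}+\tfrac{3}{500}r_{k-1}L_{k-1}\cdot(\text{the relevant scale-}(k-1)\text{ positive-feedback threshold for }Q_{k-1}(j^*))$ --- wait, more precisely, one shows this entrance time is early enough that $Q_{k-1}(j^*)$ would be forced to have positive feedback, contradicting its choice. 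The point is that $r_k L_k$ dominates $r_{k-1}L_{k-1}\cdot\tfrac{L_k}{L_{k-1}} = r_{k-1}L_k$, and the choice of the constant $a_1=\tfrac{1000A\sigma}{3}$ in \eqref{def:r_k} is precisely calibrated (via the telescoping product defining $r_k$) so that the factor $1+\tfrac{a_1}{k^2L_{k-1}^{d-1}}$ absorbs exactly the $A\sigma$-many extra overlap boxes incurred in completing clusters to their boundaries, plus the geometric factor from $L_k = k^2 L_{k-1}^d$. This is where one uses $L_1$ large (Remark~\ref{rem:r} ensures $r=\lim r_k<2r_1<\infty$), so that all these bounds stay uniform in $k$. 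The constant $C^*$ is then whatever is needed so that $\lambda<\lambda_{C^*}$ makes the scale-$1$ input (Lemma~\ref{lem:main_lemma_1}, hence \eqref{cas1}) available and makes the ``$FPP_\lambda$ must cross an edge'' delays $1/\sqrt\lambda$ overwhelm all the $r_kL_k$-scale quantities appearing at the base of the induction.

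The subtle point, and the main obstacle, is handling the boxes in $\mathcal{W}^{\text{inn}}(k,i)$ that come from outer-boundaries of clusters $\mathcal{A}\in\mathfrak{A}(k,i)$ rather than from wonderful boxes: these boundary boxes $\partial\mathcal{A}$ surround a cluster of bad $(k-1)$-boxes, and FPPHE might enter the cluster and traverse it, so one cannot naively apply \eqref{F_k} across a cluster. Here one must use the fifth fundamental property (the one promised in the text, controlling how $FPP_\lambda$ spreads through clusters) together with \eqref{D_k^C} to argue that even if $FPP_\lambda$ (or a trail of negative-feedback boxes) tries to cross or surround a cluster, either $FPP_1$ has already occupied the outer boundary boxes with positive feedback, or any negative-feedback trail crossing the cluster would require a delay incompatible with the early entrance time $\tau_{k-1,\ell}$. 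Concretely, for a box $Q_{k-1}(m)\in\partial\mathcal{A}$, one reaches it via a path in $\mathcal{W}^{\text{inn}}(k,i)$ that goes \emph{around} $\mathcal{A}$ along $\partial\mathcal{A}$ (never entering $\mathcal{A}$), so \eqref{F_k} applies at every step and the bound above goes through — the isoperimetric bound $|\mathcal{A}\cup\partial\mathcal{A}|\leq\sigma$ guarantees this detour costs at most $\sigma$ extra boxes per cluster and at most $A\sigma$ in total, which is exactly the $A\sigma$ slack in Remark~\ref{rem:completion}. Once every box on $\pi_{k-1}$ is shown to have positive feedback, in particular $Q_{k-1}(i_0)$ does, completing the induction. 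I expect the genuine difficulty to be bookkeeping the constants so that the factor $\prod_{j=2}^k(1+a_1 j^{-2}L_{j-1}^{-(d-1)})$ exactly closes the recursion; this is routine but delicate, and is the reason the paper front-loads the seemingly arbitrary choices $a_1=\tfrac{1000A\sigma}{3}$ and the $\tfrac{3}{500}$ and $\tfrac{499}{1000}$ constants.
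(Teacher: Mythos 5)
The skeleton of your proposal is right---an induction on $k$, Lemma~\ref{lem:induction1} to carry the other fundamental properties along, a path $\pi_{k-1}\subset\mathcal{W}^{\text{inn}}(k,i)$ from a quickly-entered positive-feedback box $Q_{k-1}(\ell)$ to the target, a first negative-feedback box $Q_{k-1}(j^*)$ found along it, and \eqref{F_k} iterated along the positive-feedback prefix to bound $\tau_{k-1,j^*}$ from above. This matches the paper's Claim~\ref{claim:induction}. But the step where you actually derive a contradiction has a genuine gap. You write that one ``shows this entrance time is early enough that $Q_{k-1}(j^*)$ would be forced to have positive feedback, contradicting its choice.'' That is not how positive feedback works: a small entrance time does \emph{not} imply positive feedback. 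Positive feedback of $Q_{k-1}(j^*)$ requires the existence of a positive-feedback $(k-2)$-box inside $\mathcal{W}^{\text{inn}}(k-1,j^*)$ that is entered shortly after $\tau_{k-1,j^*}$, and knowing $\tau_{k-1,j^*}$ is small tells you nothing about that. Indeed the paper explicitly cautions, right after stating \eqref{F_1_1}, that one cannot conclude positive feedback from a fast entrance.

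What actually closes the contradiction in the paper is a \emph{lower} bound on $\tau_{k-1,j^*}$, coming from Lemma~\ref{lem:NF_control}: a negative-feedback $(k-1)$-box lying in $\mathcal{W}(k,i)$ must be entered a time $\gtrsim C\omega_{k-1}L_{k-1}\cdot(\text{its }L_{k-1}\text{-distance to }\partial Q_k(i))$ after $Q_k(i)$ itself. That lemma is a non-trivial consequence of the progenitor structure \eqref{progk}, the path-of-jumps construction across poorly confined clusters, the source property \eqref{confk}, and the delay property \eqref{D_k^C}. Since every box of $\Xi_{k-1}\subset\mathcal{W}^{\text{inn}}(k,i)$ is at distance $\gtrsim L_k/1000$ from $\partial Q_k(i)$, the lower bound is of order $CL_k$, while your upper bound from \eqref{F_k} is of order $r_k L_k$; choosing $C$ large relative to $r/\omega$ (uniformly in $k$, using Remarks~\ref{rem:r} and~\ref{rem:omega}) yields the contradiction, forcing all boxes on $\Xi_{k-1}$ to have positive feedback. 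Your proposal never invokes Lemma~\ref{lem:NF_control} or any substitute for it; the delay mechanism appears only obliquely in your ``subtle point'' about crossing clusters, which you treat as peripheral bookkeeping when in fact (packaged as Lemma~\ref{lem:NF_control}) it is the engine of the contradiction. By contrast, the cluster detours you single out as the main obstacle are a comparatively routine matter already absorbed by the definition of $\mathcal{W}^{\text{inn}}(k,i)$ and Remark~\ref{rem:completion}.

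A secondary imprecision: it is not ``$r_kL_k$ dominates $r_{k-1}L_k$'' that makes the constants close (that is a tiny effect, since $r_k/r_{k-1}=1+O(k^{-2}L_{k-1}^{-(d-1)})$); what matters is that the lower bound carries the sprinkling constant $C$, which can be chosen once and for all to beat the $6d\cdot r\cdot$-sized upper bound.
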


By Lemma~\ref{lem:main_lemma_1} we know that we may set $\lambda$ so that \eqref{cas1} holds but the issue is that if $\lambda$ is such that \eqref{cask} holds, it is not immediately clear how one should set $\lambda$ to ensure that $(\text{CAS}_{k+1})$ holds. Indeed if this procedure requires that $\lambda$ tends to $0$ as $k\to\infty$ then this is not sufficient to prove Lemma~\ref{lem:cascading}. Hence the proof of Lemma~\ref{lem:cascading} will require careful consideration of how \eqref{cask} implies $(\text{CAS}_{k+1})$ and choosing $\lambda$ in such a way that it does not depend on the scale $k$.

The first step in proving Lemma~\ref{lem:cascading} is generalising \eqref{F_1_1} and \eqref{D_1_C} to higher scales. 
Then we introduce the notion of the \emph{confinement} of bad clusters to account for the existence of bad boxes in good boxes at higher scales. 
Once these ideas have been formalised, we prove that if $\lambda$ is such that \eqref{cask} holds and is below a certain constant we define later that does not depend on $k$, then 
\eqref{progk}, \eqref{F_k} and \eqref{D_k^C}
all hold, this will be the content of the second main result of this section, Lemma~\ref{lem:induction1}. 
Finally we show that if all such properties hold up to scale $k$, then $(\text{CAS}_{k+1})$ holds for a possible smaller value of $\lambda$. 
Noting that the required upper bound on $\lambda$ converges to a value that is bounded away from $0$ 
as $k$ goes to infinity establishes Lemma~\ref{lem:cascading}.

In order to establish the fundamental properties at scale $k\geq2$, we need to pay careful attention to the presence of bad boxes at lower scales, in which we cannot control either $FPP_1$ or $FPP_{\lambda}$. For example, it could happen that $FPP_1$ is \emph{slowed down} in some regard as it has to deviate around bad boxes to spread, whereas $FPP_{\lambda}$ may be \emph{sped up} by being able to spread through bad boxes at no cost. If the slowing down or speeding up of the respective processes can accumulate in a non-trivial way, then we may lose control of both of the processes. The idea is that there are relatively few bad $(k-1)$-boxes in a good $k$-box, so any slowing down or speeding up at scale $(k-1)$ is minuscule relative to scale $k$.


For $k\geq2$ we define the following property that generalises \eqref{F_1_1} to higher scales:
\begin{equation}
\label{F_k}
\begin{array}{r}
\text{If $Q_k(i)$ is good and $Q_k(j)$ has positive feedback}\\
 \text{with $\|i-j\|_{\infty}=1$, then $\tau_{k,i} < \tau_{k,j} + 2r_kL_k$,} \tag{$\text{Fast}_k$}\\
\end{array}
\end{equation}
where $r_k$ is as defined in \eqref{def:r_k}. \

Next we generalise \eqref{D_1_C} to higher scales. For $k\geq2$ and any constant $C>0$, define the property:
\begin{equation}
\label{D_k^C}
\text{If both $Q_k(i)$ and $Q_k(i^{\text{par}})$ have negative feedback, then $\tau_{1,i}>\tau_{1,i^{\text{par}}}+C\omega_kL_k$,} \tag{$\text{Del}_k^C$}
\end{equation}
where 
\begin{equation}
\label{def:omega_k}
\omega_k = \prod_{m=2}^{k}\left(1 - \tfrac{a_2}{m^2L_{m-1}^{d-1}}\right),
\end{equation}
and $a_2>0$ is a constant that only depends on $A$, $d$ and $p$ that we set later in the proof of Lemma~\ref{lem:induction1}.

\begin{remark}
\label{rem:omega}
From \eqref{def:omega_k}, we deduce that $\{\omega_k\}_{k\geq1}$ is a decreasing sequence with 
$$
\omega = \lim_{k\to\infty}\omega_k >0.
$$
The sequence $\{\omega_k\}_{k\geq1}$ accounts for the possible increase of speed that $FPP_{\lambda}$ gains at high scales due to the existence of bad boxes at lower scales. From \eqref{def:omega_k}, we deduce that if $L_1$ is sufficiently large, then $\omega>1/2$.
As in Remark~\ref{rem:r}, since $a_2$ depends only on $A$, $d$ and $p$, we add this constraint on $L_1$ to how we set it in Section~\ref{sec:setL_1}.
\end{remark}

We now introduce the concept of the \emph{confinement} of a cluster, which will allow us to control the spread of $FPP_{\lambda}$ from clusters in a good box at high scales. In particular, the confinement of clusters in a good box is a crucial ingredient in proving that \eqref{D_k^C} holds for $k\geq2$. For this we will use the notion of inner clusters that was introduced in Definition \ref{def:boundary_cluster}.

\begin{definition}[Successful confinement of inner clusters]
Let $k\geq2$, $Q_k(i)$ be good and $\mathcal{A}$ be an inner cluster in $Q_k(i)$. If there exists a $(k-1)$-box $Q_{k-1}(j)$ with negative feedback such that 
$$
Q_{k-1}(j)\in\partial\mathcal{A} \quad\text{and}\quad Q_{k-1}(j^{\text{prog}})\in\mathcal{A},
$$
then we define $\mathcal{A}$ to be \emph{poorly confined}. If $\mathcal{A}$ is not poorly confined, we define it to be \emph{successfully confined}.
\end{definition}

Note that, because of the progenitor structure (that is, if property ($\text{Prog}_{k-1}$) holds), then we deduce that the existence of any 
$(k-1)$-box $Q_{k-1}(\iota)\not\in \mathcal{A}$ of negative feedback (even with $Q_{k-1}(\iota)\not\in \partial \mathcal{A}$)
for which $Q_{k-1}(\iota^{\text{prog}})\in\mathcal{A}$ implies that $\mathcal{A}$ is poorly confined.

\begin{remark}
   \label{rem:geo_interp}
   Let $k\geq2$ and suppose $Q_k(i)$ has positive feedback. If \eqref{cask} holds then we deduce that if $\mathcal{A}$ is an inner cluster of $Q_k(i)$ that has a non-empty intersection with $Q_k^{\text{inn}}(i)$ 
   then it must be successfully confined as all $(k-1)$-boxes in $\partial\mathcal{A}$ have positive feedback as $\partial\mathcal{A}\subset\mathcal{W}^{\text{inn}}(k,i)$.
   But our proof will proceed in the opposite direction, that is, we will use successful confinement of clusters inside a $k$-box to establish~\eqref{cask}.
\end{remark}

In high-level terms, if a cluster $\mathcal{A}$ is successfully confined, then $\mathcal{A}$ contains all $(k-1)$-boxes of negative feedback that have a progenitor in $\mathcal{A}$. In some sense, $\mathcal{A}$ being successfully confined ``contains'' the effect of the bad boxes in $\mathcal{A}$. 
A key point of our analysis is understanding the conditions under which an inner cluster can be poorly confined.

We now provide a different perspective of poorly confined inner clusters. 
The idea is that any poorly confined inner cluster must have a box of negative feedback in its boundary with a progenitor in \emph{another} cluster. 
This will allow us to show that a poorly confined cluster must cause a sequence of poorly confined clusters; so a poorly confined cluster inside $k$-box 
will have an effect that is in some sense proportional to its distance to the boundary of the $k$-box. 

We define the \emph{source} of a poorly confined inner cluster as the box of negative feedback in the outer-boundary with earliest entrance time (regardless of where its progenitor is located). 
The source serves as a ``witness" to the poor confinement of the cluster. 

\begin{definition}[Source of a poorly confined inner cluster]
\label{def:source}
Let $k\geq1$ and $\mathcal{A}$ be a poorly confined inner cluster of a good $(k+1)$-box. Set $Q_{k}(s)$ be the $k$-box in $\partial\mathcal{A}$ with negative feedback that has the smallest entrance time, so that
$$
\tau_{k,s} = \inf\{\tau_{k,h}: Q_{k}(h)\in\partial\mathcal{A}\text{ and }Q_{k}(h)\text{ has negative feedback}\},
$$
noting that this infimum is over a non-empty set as $\mathcal{A}$ is poorly confined.
\end{definition}

As we will see later, it is desirable that the progenitor of the source of a poorly confined inner cluster is not contained in that cluster and is entered relatively soon after the outer boundary of the cluster is entered for the first time. These properties are given below:
\begin{equation}
\label{confk}
\begin{array}{l}
\text{If $Q_{k+1}(i)$ is good and $\mathcal{A}$ is a poorly confined inner cluster in}\\
\text{$Q_{k+1}(i)$ with source $Q_{k}(s)\in\partial\mathcal{A}$, then $Q_{k}(s^{\text{prog}})\notin\mathcal{A}$}\text{ and} \tag{$\text{Conf}_k$}\\
\tau_{k,s} < \tau_{k,b} + 2\sigma r_kL_k,\text{ where }\tau_{k,b} = \inf\{\tau_{k,b'}:Q_k(b')\in\partial\mathcal{A}\}.
\end{array}
\end{equation}

To prove that the fundamental properties hold at higher scales for an appropriate choice of $\lambda$ it will be useful to introduce the property \eqref{fund_property} at scale $k$, that states for a constant $C>0$ all the fundamental properties hold at all scales up to and including scale $k$, as given below:

\begin{equation}
\label{fund_property}
\begin{array}{l}
\text{For $C>0$, the properties ($\text{CAS}_j$), ($\text{Prog}_j$), ($\text{Fast}_j$), ($\text{Del}_j^C$) } \\
\text{and ($\text{Conf}_j$) hold for all $j\in \{1,2,\ldots,k\}$.} \tag{$\text{P}_k^C$}
\end{array}
\end{equation}

The reason we introduce \eqref{fund_property} is that to prove some fundamental property at higher scales, we will need to use other fundamental properties at that scale and lower. 

Recall that in Lemma~\ref{lem:cascading} we wish to set $C$ large enough so that if $\lambda\in(0,\lambda_C)$, then we may deduce that \eqref{cask} holds for all $k\geq1$. The issue is that in choosing $C$ large enough so that if $\lambda\in(0,\lambda_C)$ then \eqref{cask} holds, it is not immediate that $(\text{CAS}_{k+1})$ holds too. It transpires that some of the fundamental properties require $C$ to be sufficiently large to deduce that they hold at higher scales. 
The key point is that we may choose $C$ in such a way that does not depend on $k$ or $L_1$. In fact, we now define a threshold that $C$ needs to be above to allow for the first half of the inductive argument for Lemma~\ref{lem:cascading}. Define
\begin{equation}
\label{def:C_0}
C_0 = 8\sigma r_1a_3,
\end{equation}
where $a_3>1$ is a constant that only depends on $A$ and $d$ that we set later in the proof of Claim~\ref{claim:delk}. 
	The reason for this choice of $C_0$ will only become apparent in later proofs but the reader should note that $C_0$ only depends on $A$, $d$ and $p$. 
	
	The next lemma is the second main result of this section, essentially giving that \eqref{cask} and a proper choice of $\lambda$ 
imply the fundamental properties at scale $k$.
\begin{lemma}
\label{lem:induction1}
Let $k\geq2$ and $C>C_0$. If $\lambda$ is such that $(\emph{P}_{k-1}^C)$ and \eqref{cask} hold, then \eqref{fund_property} holds.
\end{lemma}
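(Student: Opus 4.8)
The plan is to show that, assuming $(\text{P}_{k-1}^C)$ and $(\text{CAS}_k)$, each of the remaining scale-$k$ properties $(\text{Prog}_k)$, $(\text{Fast}_k)$, $(\text{Del}_k^C)$ and $(\text{Conf}_k)$ holds, after which $(\text{P}_k^C)$ is immediate by combining these with the inductive hypothesis $(\text{P}_{k-1}^C)$. I would establish them roughly in the order $(\text{Conf}_k)$, then $(\text{Fast}_k)$, then $(\text{Prog}_k)$, then $(\text{Del}_k^C)$, since the delay estimate is the one that genuinely consumes the lower-scale delay budget $\omega_{k-1}$ and the constant $C>C_0$. Throughout, the recurring mechanism mirrors the scale-$1$ proofs of Lemmas~\ref{lem:fast_1}, \ref{lem:prog_1} and~\ref{lem:del_1}, but with $\mathcal{C}^-_1(i)$ replaced by $\mathcal{W}^{\text{inn}}(k,i)$, single edges replaced by whole $(k-1)$-boxes, and the travel bound of Fact~\ref{fact} replaced by the observation (from Remark~\ref{rem:completion}) that any two $(k-1)$-boxes in $\mathcal{W}^{\text{inn}}(k,i)$ are joined by a path $\pi_{k-1}$ of at most $\tfrac{3dL_k}{L_{k-1}}+A\sigma$ boxes lying inside $\mathcal{W}^{\text{inn}}(k,i)$, each of which has positive feedback by $(\text{CAS}_k)$, so that $(\text{Fast}_{k-1})$ applied box-by-box along $\pi_{k-1}$ propagates a fast entrance time across the whole of $\mathcal{W}^{\text{inn}}(k,i)$ with a total cost $|\pi_{k-1}|\cdot 2r_{k-1}L_{k-1}$; the product defining $r_k$ in~\eqref{def:r_k} with $a_1=\tfrac{1000A\sigma}{3}$ is precisely calibrated so this cost is absorbed into $2r_kL_k$, which is how $(\text{Fast}_k)$ is obtained.

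For $(\text{Conf}_k)$, I would argue as follows. Let $\mathcal{A}$ be a poorly confined inner cluster in a good $Q_{k+1}(i)$ with source $Q_k(s)\in\partial\mathcal{A}$. The first site of $\mathcal{A}\cup\partial\mathcal{A}$ to be entered lies in $\partial\mathcal{A}$ (since $\partial\mathcal{A}$ surrounds $\mathcal{A}$ and consists of good $(k-1)$-boxes), entered at time $\tau_{k,b}$. If the box $Q_k(b)$ achieving this minimum had negative feedback it would itself witness poor confinement with $\tau_{k,s}=\tau_{k,b}$, so we may assume $Q_k(b)$ has positive feedback, or more generally that some positive-feedback box of $\partial\mathcal{A}$ is entered at time essentially $\tau_{k,b}$; applying $(\text{Fast}_k)$ (already proven in this same induction step) along a path of at most $|\partial\mathcal{A}|\le A\sigma\le 2\sigma$ neighbouring boxes of $\partial\mathcal{A}$ from that box to $Q_k(s)$ shows $\tau_{k,s}<\tau_{k,b}+2\sigma r_k L_k$ — unless $Q_k(s)$ is instead reached fast, i.e.\ is of positive feedback, contradicting its definition. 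This gives the timing part. For the part $Q_k(s^{\text{prog}})\notin\mathcal{A}$: if $Q_k(s^{\text{prog}})\in\mathcal{A}$ then, by $(\text{Prog}_k)$ and the progenitor structure, $Q_k(s)$ is a negative-feedback box of $\partial\mathcal{A}$ with progenitor in $\mathcal{A}$, which is exactly the definition of $\mathcal{A}$ being poorly confined \emph{via that box} — consistent, not contradictory — so I actually need the genuinely new input: because $\mathcal{A}$ is poorly confined there is \emph{some} $Q_{k-1}(j)\in\partial\mathcal{A}$ with negative feedback and progenitor in $\mathcal{A}$, but the source is defined as the negative-feedback box of $\partial\mathcal{A}$ with smallest entrance time, and one shows using $(\text{Del}_k^C)$ at scale $k$ (again available in this step) that a negative-feedback box whose progenitor lies inside $\mathcal{A}$ must be entered substantially later than $\tau_{k,b}$, hence later than $\tau_{k,s}$ once $\tau_{k,s}$ is controlled as above — forcing $Q_k(s^{\text{prog}})\notin\mathcal{A}$. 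The properties $(\text{Prog}_k)$ and $(\text{Del}_k^C)$ are then proven by the direct analogues of Lemmas~\ref{lem:prog_1} and~\ref{lem:del_1}: trace the site $e_{k,i}$ through which $Q_k(i)$ is entered to its transmitting neighbour $u\notin Q_k(i)$, which (up to a bounded detour using that bad $(k-1)$-boxes near the boundary of $Q_k(i^{\text{par}})$ are few and confined) lies in $\mathcal{W}^{\text{inn}}(k,i^{\text{par}})$; if $u$ carries $FPP_1$ and $Q_k(i^{\text{par}})$ has positive feedback, $(\text{CAS}_k)$ plus box-by-box $(\text{Fast}_{k-1})$ propagation shows $FPP_1$ reaches a $(k-1)$-box of $\mathcal{W}^{\text{inn}}(k,i)$ fast enough to make $Q_k(i)$ positive feedback, a contradiction; if $u$ carries $FPP_\lambda$, crossing a bad-free region of $Q_k(i^{\text{par}})$ against the positive feedback of that box produces a delay of order $\tfrac{1}{\sqrt\lambda}$ per $(k-1)$-box, again contradicting positive feedback of the parent. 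Quantifying this delay across $Q_k(i^{\text{par}})$ and comparing with $r_kL_k$ and with $C\omega_kL_k$ — using $\omega_k=\omega_{k-1}(1-\tfrac{a_2}{k^2L_{k-1}^{d-1}})$ and the bound on the number of bad $(k-1)$-boxes — is what pins down the constant $a_2$ and forces $C>C_0=8\sigma r_1 a_3$.

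The main obstacle I expect is $(\text{Del}_k^C)$, specifically controlling the cumulative effect of the (few but arbitrarily-placed) bad $(k-1)$-boxes in $Q_k(i)$ and $Q_k(i^{\text{par}})$ on the $FPP_\lambda$ delay. The danger is that $FPP_\lambda$ might shortcut through bad boxes essentially for free, eroding the $\tfrac{1}{\sqrt\lambda}$-per-box gain that the proof relies on; the resolution is that a good $(k-1)$-box of negative feedback still imposes a scale-$(k-1)$ delay $C\omega_{k-1}L_{k-1}$ on $FPP_\lambda$ by the inductive $(\text{Del}_{k-1}^C)$, while bad $(k-1)$-boxes are so sparse (at most $A\sigma$ per cluster, at most $A$ clusters, hence $O_{A,d}(1)$ of them, each of diameter $L_{k-1}$) that the total length they can ``save'' is at most $a_2 L_k / (k^2 L_{k-1}^{d-1})$ for a suitable $a_2=a_2(A,d,p)$, which is exactly the deficit built into $\omega_k$. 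Making this book-keeping precise — in particular showing the trail of negative-feedback $(k-1)$-boxes realising the delay from $e_{k,i}$ back toward the parent cannot be fully absorbed by clusters, which is where $(\text{Conf}_k)$ and the progenitor structure re-enter — is the delicate combinatorial-geometric heart of the argument, and is presumably why the lemma is stated as a single package $(\text{P}_k^C)$ with all five properties proven together by mutual bootstrapping within the scale.
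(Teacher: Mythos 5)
Your overall blueprint is right in spirit: the scale-$k$ properties are proved by mimicking Lemmas~\ref{lem:prog_1}, \ref{lem:fast_1}, \ref{lem:del_1}, with $\mathcal{C}^-_1(i)$ replaced by $\mathcal{W}^{\text{inn}}(k,i)$, single-edge transmission replaced by $(k-1)$-box-to-$(k-1)$-box transmission via $(\text{Fast}_{k-1})$, and the travel bound of Fact~\ref{fact} replaced by the path length bound from Remark~\ref{rem:completion}, with $(\text{CAS}_k)$ supplying positive feedback of the intermediate $(k-1)$-boxes. However, there are two genuine problems.

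First, your proposed ordering $(\text{Conf}_k) \to (\text{Fast}_k) \to (\text{Prog}_k) \to (\text{Del}_k^C)$ is circular. In the middle of your argument for $(\text{Conf}_k)$ you appeal to $(\text{Fast}_k)$ and to $(\text{Del}_k^C)$ as ``already proven in this same induction step'' and ``again available in this step,'' yet both come \emph{after} $(\text{Conf}_k)$ in your stated order. The dependency is genuine: $(\text{Conf}_k)$ is a consequence of $(\text{Prog}_k)$, $(\text{Fast}_k)$, and $(\text{Del}_k^C)$ (this is exactly Lemma~\ref{lem:source_ET}), so it cannot be established first. The paper resolves this by proving $(\text{Prog}_k)$, $(\text{Fast}_k)$, $(\text{Del}_k^C)$ in that order and then deducing $(\text{Conf}_k)$ immediately from Lemma~\ref{lem:source_ET}; meanwhile, the only confinement input the other three arguments consume is $(\text{Conf}_{k-1})$, which is part of the hypothesis $(\text{P}_{k-1}^C)$, so no bootstrapping loop is needed.

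Second, your description of the delay mechanism for $(\text{Del}_k^C)$ is wrong at higher scales. You write that $FPP_\lambda$ ``crossing a bad-free region of $Q_k(i^{\text{par}})$ $\ldots$ produces a delay of order $\tfrac{1}{\sqrt\lambda}$ per $(k-1)$-box.'' The $\tfrac{1}{\sqrt\lambda}$-per-edge gain is a purely scale-$1$ fact coming from the event $E_5$, and it does not propagate to higher scales in that form. At scale $k$ the delay is extracted from the inductive $(\text{Del}_{k-1}^C)$ applied along the path-of-jumps structure, and this is packaged in Lemma~\ref{lem:NF_control}, which gives the lower bound $\tau_{k-1,j_0} > \tau_{k,i} + C\omega_{k-1}L_{k-1}\big(\tfrac{3}{L_{k-1}}\operatorname{dist}(Q_{k-1}(j_0),\partial Q_k(i)) - a_4\big)$. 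This lemma, together with Claim~\ref{claim:delk} (which locates a specific negative-feedback $(k-1)$-box deep inside $Q_k(i^{\text{par}})$ entered soon after $Q_k(i)$), is what actually forces the scale-$k$ delay; it is also precisely what handles your worry about $FPP_\lambda$ ``shortcutting through bad boxes,'' since the jump steps in the path of jumps are bounded in number (at most $A$) and length (at most $\sigma$), so the savings are absorbed into the constants $a_4$, $a_6$, $a_7$ and ultimately $a_2$. You gesture at the right mechanism in your final paragraph, but you do not identify Lemma~\ref{lem:NF_control} as the key input, and your initial $\tfrac{1}{\sqrt\lambda}$-per-box framing would not give a proof. (Minor: $|\partial\mathcal{A}|\le\sigma$, not $A\sigma$, and $A\sigma\le 2\sigma$ would need $A\le 2$, which is false since $A>d\geq 2$.)

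A smaller stylistic point: for $(\text{Prog}_k)$ the paper does not case-split on whether the transmitting neighbour $u$ carries $FPP_1$ or $FPP_\lambda$ (that is the scale-$1$ mechanism); instead it works entirely at the $(k-1)$-box level via Claim~\ref{claim:progk}, which finds boxes $Q_{k-1}(i_0^*)\in\mathcal{W}^{\text{inn}}(k,i^{\text{par}})$ and $Q_{k-1}(i_1^*)\in\mathcal{W}^{\text{inn}}(k,i)\cap\mathcal{W}^{\text{inn}}(k,i^{\text{par}})$ with $\tau_{k-1,i_0^*}\leq\tau_{k,i}$ and a short connecting path, from which $(\text{Fast}_{k-1})$ and $(\text{CAS}_k)$ give a contradiction with the negative feedback of $Q_k(i)$.
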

Lemma~\ref{lem:induction1} provides a key step in the inductive argument in the proof of Lemma~\ref{lem:cascading}. One subtlety is that we have not yet proved that $(\text{Conf}_1)$ holds but the proof at higher scales transpires to be completely analogous. To streamline later proofs, we now prove that \eqref{confk} holds assuming other fundamental properties at that scale hold. 

\begin{lemma}
\label{lem:source_ET}
Let $k\geq1$ and $C>C_0$. If $\lambda$ is such that \eqref{progk}, \eqref{F_k} and \eqref{D_k^C} hold, then \eqref{confk} holds.
\end{lemma}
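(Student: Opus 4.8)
My plan is to establish the two assertions of \eqref{confk} in the reverse of the order in which they are stated: first the entrance-time estimate $\tau_{k,s}<\tau_{k,b}+2\sigma r_kL_k$, and then $Q_k(s^{\text{prog}})\notin\mathcal{A}$, deducing the latter from the former. The argument uses only the deterministic combinatorics of the box/core/cluster/parent construction together with the three hypotheses \eqref{progk}, \eqref{F_k} and \eqref{D_k^C}, so it is uniform in $k\geq1$ and in particular also covers the still-missing case $(\text{Conf}_1)$. I would begin by recording three structural facts. (i) Every $k$-box of $\partial\mathcal{A}$ is \emph{wonderful}, hence good, and sits at $\ell_\infty$-distance at least $4$ in core indices from every bad $k$-box of $Q_{k+1}(i)$: two $k$-boxes whose core indices differ by at most $3$ overlap, so a box that close to a bad box is absorbed into its cluster. (ii) The parent of a $k$-box differs from it by at most $2$ in core index, since the entered site $e_{k,i}$ lies in the parent's core. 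Together, (i) and (ii) show that the parent of a box of $\partial\mathcal{A}$ is never bad. (iii) Every neighbour of a box of $\mathcal{A}$ lies in $\mathcal{A}\cup\partial\mathcal{A}$, so the first box of $\mathcal{A}$ to be entered is entered through a box of $\partial\mathcal{A}$; consequently $\tau_{k,a}>\tau_{k,b}$ for \emph{every} $Q_k(a)\in\mathcal{A}$, with $\tau_{k,b}$ as in \eqref{confk}.

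For the entrance-time estimate, let $Q_k(b)$ be the box of $\partial\mathcal{A}$ with smallest entrance time, so it realises $\tau_{k,b}$, and by (i) it is good. If $Q_k(b)$ has negative feedback then it competes in the infimum defining the source, so $\tau_{k,s}\leq\tau_{k,b}$ and we are done. If $Q_k(b)$ has positive feedback, I would run a breadth-first search through $\partial\mathcal{A}$ from $Q_k(b)$ in which one expands only boxes of positive feedback: by \eqref{F_k} (applicable since every box of $\partial\mathcal{A}$ is good) each newly reached box is entered within $2r_kL_k$ of the box that reached it, and as $|\partial\mathcal{A}|\leq\sigma$ every reached box is entered within $2\sigma r_kL_k$ of $\tau_{k,b}$. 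Since $\partial\mathcal{A}$ is connected (as the outer $\ell_\infty$-boundary of the connected set $\mathcal{A}$) and contains at least the negative-feedback box $Q_k(s)$, taking any path in $\partial\mathcal{A}$ from $Q_k(b)$ to $Q_k(s)$ and its first negative-feedback box $Q_k(c)$, the search reaches $Q_k(c)$; thus $\tau_{k,c}<\tau_{k,b}+2\sigma r_kL_k$, and minimality of the source gives $\tau_{k,s}\leq\tau_{k,c}<\tau_{k,b}+2\sigma r_kL_k$.

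For $Q_k(s^{\text{prog}})\notin\mathcal{A}$, suppose to the contrary that $Q_k(s^{\text{prog}})\in\mathcal{A}$. As $Q_k(s)$ has negative feedback, \eqref{progk} forces $Q_k(s^{\text{par}})$ to be bad or of negative feedback; by (i)--(ii) it is not bad, hence has negative feedback, and \eqref{D_k^C} gives $\tau_{k,s}>\tau_{k,s^{\text{par}}}+C\omega_kL_k$. Walking the path of parents from $Q_k(s)$ to its progenitor, entrance times are non-increasing, so $\tau_{k,s^{\text{par}}}\geq\tau_{k,s^{\text{prog}}}$, and since $Q_k(s^{\text{prog}})\in\mathcal{A}$ fact (iii) gives $\tau_{k,s^{\text{prog}}}\geq\tau_{k,b}$. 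Chaining,
\[
\tau_{k,s}>\tau_{k,s^{\text{par}}}+C\omega_kL_k\geq\tau_{k,s^{\text{prog}}}+C\omega_kL_k\geq\tau_{k,b}+C\omega_kL_k .
\]
Here the choice of $C_0$ enters: for $C>C_0$ one has $C\omega_k>2\sigma r_k$ for all $k$ (using $\omega_k>\tfrac12$ and $r_k<2r_1$ from Remarks~\ref{rem:r} and~\ref{rem:omega}), so $\tau_{k,s}>\tau_{k,b}+2\sigma r_kL_k$, contradicting the estimate just proved. Hence $Q_k(s^{\text{prog}})\notin\mathcal{A}$, which completes \eqref{confk}.

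The step I expect to be the real obstacle is not any single inequality but the deterministic bookkeeping behind (i)--(iii): making precise how far a cluster extends around its bad boxes, how far a parent can sit from its box, and that an inner cluster is genuinely enclosed by its outer boundary --- all while keeping track of the distinction between a box lying inside versus on the boundary of another --- and then verifying that these combinatorial facts, together with the numerology $C\omega_k>2\sigma r_k$, hold simultaneously at every scale $k\geq1$. The argument needs no probabilistic input; it rests entirely on the three assumed properties and on geometry.
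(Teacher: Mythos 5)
Your argument is correct and follows essentially the same two-stage route as the paper: first deduce $\tau_{k,s}<\tau_{k,b}+2\sigma r_kL_k$ by propagating \eqref{F_k} through the at-most-$\sigma$ good boxes of $\partial\mathcal{A}$, then obtain a contradiction from the delay $\tau_{k,s}>\tau_{k,s^{\text{prog}}}+C\omega_kL_k\geq\tau_{k,b}+C\omega_kL_k$ and $C>C_0$ when $Q_k(s^{\text{prog}})\in\mathcal{A}$. One small merit of your write-up worth noting: the paper invokes \eqref{progk} and \eqref{D_k^C} to get the delay but does not spell out why $Q_k(s^{\text{par}})$ has negative feedback rather than being bad (which is what makes \eqref{D_k^C} applicable to the pair $s,s^{\text{par}}$); your facts (i)--(ii), that boxes of $\partial\mathcal{A}$ are at core-index distance at least $4$ from bad boxes while a parent sits at distance at most $2$, supply exactly this missing justification.
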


\begin{proof}
Let $\mathcal{A}$ be a poorly confined inner cluster of a good $(k+1)$-box with source $Q_k(s)$. 
Let $Q_k(b)$ be the $k$-box in $\partial\mathcal{A}$ with the smallest entrance time, so that $\tau_{k,b} = \inf\{\tau_{k,b'}:Q_k(b')\in\partial\mathcal{A}\}$. As $\lambda$ is such that \eqref{F_k} holds, then 
\begin{equation}
\label{pf:source_ET_1}
\tau_{k,s} < \tau_{k,b} + 2\sigma r_kL_k,
\end{equation}
as there must exist a $k$-box with negative feedback within $\partial\mathcal{A}$ by time $\tau_{k,b} + 2\sigma r_kL_k$. If there did not exist a $k$-box with negative feedback within $\partial\mathcal{A}$ by time $\tau_{k,b} + 2\sigma r_kL_k$, then every $k$-box in $\partial\mathcal{A}$ would have positive feedback as \eqref{F_k} holds and the number of $k$-boxes in $\partial\mathcal{A}$ is bounded above by $\sigma$. This would give a contradiction as $\mathcal{A}$ is assumed to be poorly confined.

It remains to prove that $Q_k(s^{\text{prog}})\notin\mathcal{A}$. Suppose for contradiction that $Q_k(s^{\text{prog}})\in\mathcal{A}$ and set $Q_k(j)$ to be the bad $k$-box in $\mathcal{A}$ with earliest entrance time, so that
$$
\tau_{k,j} = \inf\{\tau_{k,j'}:Q_k(j')\in\mathcal{A}\text{ and } Q_{k}(j')\text{ is bad}\}.
$$
Clearly $\tau_{k,j}\leq\tau_{k,s^{\text{prog}}}$ as we assume that $Q_k(s^{\text{prog}})$ is a bad $k$-box in $\mathcal{A}$.
By construction we have
$$
\tau_{k,b}<\tau_{k,j}\leq\tau_{k,s^{\text{prog}}}<\tau_{k,s}.
$$
Moreover, as $\lambda$ is such that \eqref{progk} and \eqref{D_k^C} hold, we have
\begin{equation}
\label{pf:source_ET_2}
\tau_{k,s} > \tau_{k,s^{\text{prog}}} + C\omega_kL_k > \tau_{k,b} + C\omega_kL_k.
\end{equation}
If $C$ is large enough so 
\begin{equation}
\label{eq:C_cond_source_ET}
C>\tfrac{2\sigma r_k}{\omega_k},
\end{equation}
then \eqref{pf:source_ET_2} contradicts \eqref{pf:source_ET_1} and thus $Q_k(s^{\text{prog}})\notin\mathcal{A}$. By Remark~\ref{rem:r} and Remark~\ref{rem:omega}, for a large enough choice of $L_1$ we have that $r<2r_1$ and $\omega>1/2$ and so if $C>8\sigma r_1$ then \eqref{eq:C_cond_source_ET} holds for all $k\geq1$. As this follows immediately from the fact that $C>C_0$, the result is established.
\end{proof}

Before turning to the proof of Lemma~\ref{lem:cascading}, we must outline an important consequence that can be deduced from the fundamental properties that provides a lower bound on the entrance time for wonderful boxes that have negative feedback in a good box. The idea is that the poor confinement of an inner cluster in a good box can be traced through the parent structure until reaching the boundary or a boundary cluster of that box. By carefully considering the progenitor structure given by \eqref{progk} and also \eqref{confk}, we deduce a lower bound on entrance times by the number of boxes of negative feedback on this parent structure. We make this idea rigorous below.

Assume $C>C_0$ and $\lambda$ is such that \eqref{fund_property} holds. Let $Q_{k+1}(i)$ be good and $\mathcal{A}_1$ be a poorly confined inner cluster in $Q_{k+1}(i)$ with source $Q_k(s_1)$ that is the witness of the poor confinement of $\mathcal{A}_1$. As $\lambda$ is such that \eqref{confk} holds, we have that $Q_k(s_1^{\text{prog}})$ is not contained in $\mathcal{A}_1$. If $Q_k(s_1^{\text{prog}})$ is contained in another inner cluster of $Q_{k+1}(i)$, say $\mathcal{A}_2$, then $\mathcal{A}_2$ is also poorly confined as $\lambda$ is such that \eqref{progk} holds. The inner cluster $\mathcal{A}_2$ has source $Q_k(s_2)$ say. From this point onwards, we can search for the progenitor of $Q_k(s_2)$ and iterate this procedure until the progenitor of a source is either outside of $Q_{k+1}(i)$ or is contained in an boundary cluster in $Q_{k+1}(i)$.  This procedure is illustrated in Figure~\ref{fig:pathjumps} in the case where it terminates when the progenitor of a source is not contained in $Q_{k+1}(i)$.

\begin{figure}[!htb]
	\center{\includegraphics[scale=0.55]{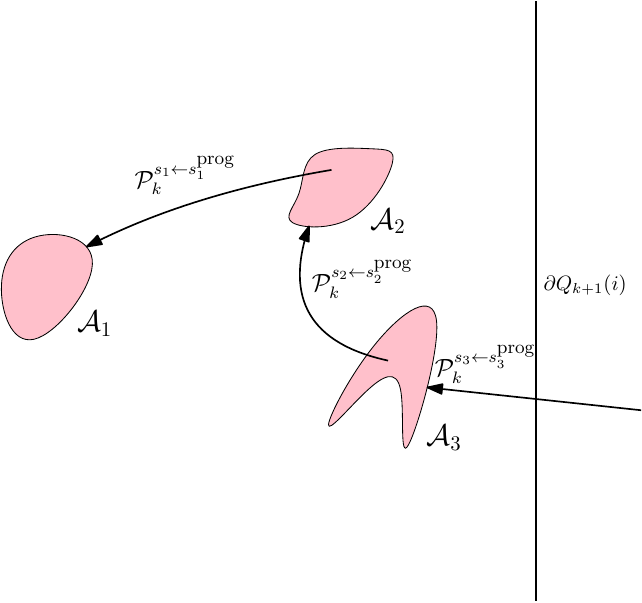}}
        \caption{An illustration of the path of jumps construction. The pink regions represents poorly confined inner clusters in $Q_{k+1}(i)$. 
        The black arrows represent the path of parents from the sources of the poorly confined inner clusters to their progenitor.
        The vertical black line represents a piece of the boundary of $Q_{k+1}(i)$, whose interior is to the left of the vertical line.}
        \label{fig:pathjumps} 
\end{figure}

From this construction we discover a sequence of poorly confined inner clusters $\mathcal{A}_1 \mathcal{A}_2,\ldots,\mathcal{A}_n$ with respective sources $Q_k(s_1),Q_k(s_2),\ldots,Q_k(s_n)$ where $Q_k(s_n^{\text{prog}})$ is contained outside of $Q_{k+1}(i)$ or is contained in a boundary cluster of $Q_{k+1}(i)$. Define the collection of paths of parents
$$
\mathcal{P}_k^{s_1\leftarrow s_1^{\text{prog}}}, \mathcal{P}_k^{s_2\leftarrow s_2^{\text{prog}}},\ldots,\mathcal{P}_k^{s_n\leftarrow s_n^{\text{prog}}}, 
$$
as a \emph{path of jumps}; we refer to a \emph{jump} when we concatenate $s_\iota^{\text{prog}}$ with $s_{\iota+1}$ for each $\iota\in\{1,2,\ldots,n-1\}$.
Hence, for any poorly confined inner cluster in a good box we can construct a path of jumps through the progenitor structure that terminates when it discovers a progenitor not contained in an inner cluster. Utilising the path of jumps construction and \eqref{D_k^C} we may deduce a useful lower bound on the entrance for wonderful boxes of negative feedback in a good box as given in Lemma~\ref{lem:NF_control}.

Before we state Lemma~\ref{lem:NF_control} we introduce the following notation. For $B,B'\subset \mathbb{Z}^d$, we define
$$
\text{dist}(B,B')=\inf_{x\in B,y\in B'}\|x-y\|_1. 
$$
If $Q_k(i)$ and $Q_k(i')$ are $k$-boxes such that $Q_k(i)\subset B$, $Q_k(i')\subset B'$ and the path of parents $\mathcal{P}_k^{i\leftarrow i'}$ is well-defined, then it is easy to verify that
$$
\text{the number of $k$-boxes in }\mathcal{P}_k^{i\leftarrow i'}\geq\tfrac{3}{L_{k}}\text{dist}(B,B').
$$

\begin{lemma}
\label{lem:NF_control}
Let $k\geq1$ and $C>C_0$. Suppose $\lambda$ is such that \eqref{fund_property} holds. If $Q_{k+1}(i)$ is good and $Q_{k}(j_0)\in\mathcal{W}(k+1,i)$ has negative feedback, then
\begin{align}
\tau_{k,j_0} > \tau_{k+1,i} + C\omega_{k}L_{k}\left(\tfrac{3}{L_{k}}\emph{dist}(Q_{k}(j_0), \partial Q_{k+1}(i))-a_4\right), \label{eq:NF_control}
\end{align}
where $a_4>0$ is a constant that only depends on $A$ and $d$.
\end{lemma}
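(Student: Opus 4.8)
The plan is to trace backward through the parent--progenitor structure a path (a \emph{trail}) of $k$-boxes that starts at $Q_k(j_0)$ and runs out to $\partial Q_{k+1}(i)$, and to argue that all but a bounded (depending only on $A,d$) number of its boxes have negative feedback, so that repeated application of \eqref{D_k^C} accumulates a delay proportional to the trail's length, which in turn is at least $\tfrac{3}{L_k}\text{dist}(Q_k(j_0),\partial Q_{k+1}(i))$ up to a constant, by the counting observation preceding the lemma.

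Since $Q_k(j_0)$ has negative feedback, \eqref{progk} gives that the path of parents $\mathcal{P}_k^{j_0\leftarrow j_0^{\text{prog}}}$ consists only of bad or negative-feedback $k$-boxes and terminates at the bad box $Q_k(j_0^{\text{prog}})$. If $Q_k(j_0^{\text{prog}})$ lies outside $Q_{k+1}(i)$ or in a boundary cluster of $Q_{k+1}(i)$, the argument is essentially direct: the portion of $\mathcal{P}_k^{j_0\leftarrow j_0^{\text{prog}}}$ inside $Q_{k+1}(i)$ reaches within $O(L_k)$ of $\partial Q_{k+1}(i)$, hence contains at least $\tfrac{3}{L_k}\text{dist}(Q_k(j_0),\partial Q_{k+1}(i))-O(1)$ boxes, at most $A\sigma$ of which belong to a cluster (and so can be bad), with all the others having negative feedback. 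Telescoping the trivial inequality $\tau_{k,\cdot}\ge\tau_{k,\cdot^{\text{par}}}$ along this path, upgraded by \eqref{D_k^C} to a gain of $C\omega_kL_k$ at every consecutive pair of negative-feedback boxes, and using that the last box of the path still inside $Q_{k+1}(i)$ has entrance time at least $\tau_{k+1,i}$, gives \eqref{eq:NF_control}; the boxes lying in clusters, together with the $-1$ incurred whenever a maximal run of negative-feedback boxes is interrupted, are all absorbed into $a_4$, which depends only on $A$ and $d$ since $\sigma=\sigma(A,d)$.

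The substantive case is $Q_k(j_0^{\text{prog}})$ lying in an inner cluster $\mathcal{A}_1$ of $Q_{k+1}(i)$. Then $\mathcal{A}_1$ is poorly confined (apply the remark following the definition of poor confinement: $Q_k(j_0)$ is wonderful, hence not in $\mathcal{A}_1$, has negative feedback, and $Q_k(j_0^{\text{prog}})\in\mathcal{A}_1$), so it has a source. I would invoke the path of jumps construction from just before the lemma, producing poorly confined inner clusters $\mathcal{A}_1,\dots,\mathcal{A}_n$ with sources $Q_k(s_1),\dots,Q_k(s_n)$ and $Q_k(s_n^{\text{prog}})$ outside $Q_{k+1}(i)$ or in a boundary cluster, together with the paths of parents $\mathcal{P}_k^{s_\iota\leftarrow s_\iota^{\text{prog}}}$. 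Concatenating the path of parents from $Q_k(j_0)$ into $\mathcal{A}_1$ with these paths of parents, joined at the jumps connecting $s_\iota^{\text{prog}}$ to $s_{\iota+1}$, gives the trail out to within $O(L_k)$ of $\partial Q_{k+1}(i)$. On each path-of-parents segment the delays telescope as above; across each jump one bounds the possible \emph{loss} of accumulated delay, using that $\tau_{k,s_\iota^{\text{prog}}}$ is at least the entrance time of the first box of $\partial\mathcal{A}_{\iota+1}$ to be entered while \eqref{confk} gives $\tau_{k,s_{\iota+1}}<\tau_{k,b_{\iota+1}}+2\sigma r_kL_k$, so each jump costs at most $2\sigma r_kL_k$, and there are at most $A$ jumps since $Q_{k+1}(i)$ has at most $A$ clusters. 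Since $C>C_0=8\sigma r_1a_3$ while $r_k<2r_1$ and $\omega_k>1/2$ (Remarks~\ref{rem:r} and~\ref{rem:omega}), $\tfrac{2\sigma r_k}{C\omega_k}<1$, so the total jump loss is at most $2A\sigma\cdot C\omega_kL_k$; combining it with the at least $\tfrac{3}{L_k}\text{dist}(Q_k(j_0),\partial Q_{k+1}(i))-O(A\sigma)$ negative-feedback steps on the trail, and with $\tau\ge\tau_{k+1,i}$ at the terminal box, yields \eqref{eq:NF_control} with $a_4=O(A\sigma)$.

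The main obstacle is precisely this bookkeeping at the interface between clusters and the parent structure: one must verify that the trail really does cover essentially the full distance to $\partial Q_{k+1}(i)$ (each jump and each passage inside a cluster can backtrack by only $O(L_k)$, and there are $O(A)$ of them), that every box of the trail not in any cluster has negative feedback (it lies on a path of parents, hence is bad or negative-feedback, and not being bad it is negative-feedback), and that the delay lost at a jump is dominated by the delay gained per negative-feedback step --- which is exactly why the threshold \eqref{def:C_0} carries the factor $8\sigma r_1$. The smaller point that \eqref{D_k^C} only applies when a box's parent also has negative feedback costs one application per maximal run, hence $O(A\sigma)$ in total, again absorbed into $a_4$.
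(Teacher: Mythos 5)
Your proposal reproduces the paper's argument faithfully: the path-of-jumps construction through poorly confined inner clusters, the telescoping of \eqref{D_k^C} delays along paths of parents, and the constant bounds on the number of jumps (at most $A$) and of bad boxes (at most $A\sigma$) all match the paper's proof, including the separate treatment of the easy case where $Q_k(j_0^{\text{prog}})$ is already outside $Q_{k+1}(i)$ or in a boundary cluster. The one point of divergence is the bookkeeping at the jumps: the paper truncates each path of parents at the \emph{first} box $Q_k(s_m^*)\in\partial\mathcal{A}_{m+1}$ it encounters (which has negative feedback by \eqref{progk}), so that the jump to the source $Q_k(s_{m+1})$ is automatically time-\emph{decreasing} by the minimality in Definition~\ref{def:source} and no loss has to be accounted for; you instead run each path all the way to the progenitor inside $\mathcal{A}_{m+1}$ and jump from there, and correctly bound the resulting loss by $2\sigma r_k L_k$ via \eqref{confk} together with the fact that $\mathcal{A}_{m+1}$ is entered through its outer boundary. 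Both schemes work; the paper's keeps the jump contribution manifestly non-negative, whereas yours produces a slightly larger $a_4$, and your displayed ``total jump loss $\le 2A\sigma\cdot C\omega_kL_k$'' should read $\le 2A\sigma r_kL_k$ (which is $< AC\omega_kL_k$ since $C>C_0$) --- a harmless slip since either quantity is absorbed into $a_4$.
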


\begin{proof}
Suppose $Q_{k+1}(i)$ is good and $Q_{k}(j_0)\in\mathcal{W}(k+1,i)$ has negative feedback. First consider the case where  $Q_{k}(j_0^{\text{prog}})$ is contained in an inner cluster of $Q_{k+1}(i)$, say $\mathcal{A}_1$. As $\lambda$ is such that \eqref{fund_property} holds, $\mathcal{A}_1$ must be poorly confined and so by the path of jumps construction above, there exists a sequence of poorly confined inner clusters in $Q_{k+1}(i)$ enumerated as $\mathcal{A}_1,\mathcal{A}_2,\ldots,\mathcal{A}_n$
with respective sources $Q_{k}(s_1),Q_{k}(s_2),\ldots,Q_{k}(s_n)$ so that the following properties hold:
\begin{align}
Q_{k}(j_0^{\text{prog}})\in\mathcal{A}_1,\nonumber\\
Q_{k}(s_n^{\text{prog}}) \text{ is not contained in an inner cluster in }Q_{k+1}(i),\nonumber\\
Q_{k}(s_m^{prog})\in\mathcal{A}_{m+1}\quad \text{for }1\leq m<n.\nonumber
\end{align}
By definition of source, no cluster is repeated in this construction and $n\leq A$ as there are at most $A$ clusters in a good $k$-box.

From the path of jumps we outline the following procedure to deduce a lower bound on the entrance time for $Q_{k}(j_0)$. 
Start following the path of parents from $Q_{k}(j_0)$ to $Q_{k}(j_0^{\text{prog}})$ and stop once a $k$-box in $\partial \mathcal{A}_1$ is discovered, 
say $Q_{k}(j_0^*)$ which must have negative feedback as $\lambda$ is such that \eqref{progk} holds. By definition of source and Lemma~\ref{lem:source_ET}, we have that $\tau_{k,s_1}<\tau_{k,j_0^*}$. 
Then jump to $Q_{k}(s_1)$ and follow its path of parents to $Q_{k}(s_1^{\text{prog}})$, stopping once a $k$-box in $\partial \mathcal{A}_2$ is discovered, 
say $Q_{k}(s_1^*)$ which again must have negative feedback with $\tau_{k,s_2}<\tau_{k,s_1^*}$. Next, jump to $Q_{k}(s_2)$ and proceed in this way until we reach $Q_{k}(s_{n})$, 
from which we follow its path of parents until hitting the boundary of $Q_{k+1}(i)$ or $\partial\mathcal{A}_n$, which in this latter case $\mathcal{A}_n$ must be a boundary cluster of $Q_{k+1}(i)$ 
and we set $Q_{k-1}(s_{n}^*)$ as before. In the former case we set $Q_k(s_{n}^*)$ to be the first $k$-box on the path of parents starting from $Q_k(s_{n})$ whose parent is not contained in $Q_{k+1}(i)$. 
In either case it is clear that $Q_{k+1}(i)$ is entered before $Q_k(s_n^*)$. Through this construction we deduce
\begin{align}
\tau_{k,s_1}<\tau_{k,j_0^*},\quad \tau_{k+1,i}\leq\tau_{k,s_{n}^*} \quad\text{and}\quad \tau_{k,s_{m+1}}<\tau_{k,s_{m}^*} \text{ for all }1\leq m< n. \label{eq:jump_times}
\end{align}
Note that in the construction above there are at most $A$ jumps, since each jump occurs in a bad cluster in $Q_{k+1}(i)$ and clusters cannot be repeated due to the ordering of the entrance times of the sources as given in \eqref{eq:jump_times}. Note also that each jump has length at most $\sigma$, since the jump is between two $k$-boxes contained in the outer-boundary of the same cluster. Hence the number of $k$-boxes on the paths constructed above is bounded below by 
\begin{align}
\tfrac{3}{L_{k}}\text{dist}(Q_{k}(j_0),\partial Q_{k+1}(i)) - a, \label{pf:NF_control1}
\end{align}
where $a>0$ is a constant that depends only on $A$ and $d$.

Every $k$-box on these paths is either bad or has negative feedback as $\lambda$ is such that \eqref{progk} holds and the number of such bad boxes is bounded above by a constant that only depends on $A$ and $d$. For every pair of boxes with negative feedback on these paths where one is the parent of the other, we deduce that there is a delay of at least $C\omega_{k}L_{k}$ in the respective entrance times of the boxes as $\lambda$ is such that \eqref{D_k^C} holds. From \eqref{pf:NF_control1}, we deduce the number of such pairs is bounded below by
\begin{align}
\tfrac{3}{L_{k}}\text{dist}(Q_{k}(j_0),\partial Q_{k+1}(i)) - a', \label{pf:NF_control2}
\end{align}
where $a'\geq a$ is a constant that depends only on $A$ and $d$.

Combining \eqref{pf:NF_control2} and \eqref{eq:jump_times} with the delay of at least $C\omega_kL_k$ between boxes with negative feedback that have a negative feedback parent, we deduce
\begin{align*}
\tau_{k,j_0} & > \tau_{k,s_{n}^*} + C\omega_{k}L_{k}\left(\tfrac{3}{L_{k}}\text{dist}(Q_{k}(j_0),\partial Q_{k+1}(i)) - a'\right),\\
& \geq\tau_{k+1,i} + C\omega_{k}L_{k}\left(\tfrac{3}{L_{k}}\text{dist}(Q_{k}(j_0),\partial Q_{k+1}(i)) - a'\right).
\end{align*}
Now consider the case where $Q_k(j_0^{\text{prog}})$ is either not contained in $Q_{k+1}(i)$ or is contained in a boundary cluster of $Q_{k+1}(i)$, $\mathcal{B}$ say. 
In this scenario set $Q_{k}(j_0^*)$ to be the first $k$-box on the path of parents $\mathcal{P}_k^{j_0\leftarrow o}$ whose 
parent is not contained in $Q_{k+1}(i)$ or the first box that is contained in $\mathcal{B}$. 
In this case, the result follow by an analogous argument to the previous case where we instead consider the path of parents $\mathcal{P}_k^{j_0\leftarrow j_0^*}$, noting that no jumps need be considered. The lemma follows by setting $a_4=a'$.
\end{proof}

To conclude the analysis of the fundamental properties, we are left with the task of proving Lemmas~\ref{lem:cascading} and~\ref{lem:induction1}. This will be done in the next two sections.

\subsection{Proof of cascading lemma (Lemma~\ref{lem:cascading})}
\label{sec:cascading_lemma_proof}

In this section, we give the proof for Lemma~\ref{lem:cascading} that provides the cascading argument required in the proof of Theorem~\ref{maintheorem}. 
For this proof we assume that Lemma~\ref{lem:induction1} holds, which will be established in the next section.
The main step of the proof is given in the following claim, from which the proof of Lemma~\ref{lem:cascading} readily follows.

\begin{claim}
\label{claim:induction}
There exists a constant $C_1>0$ that only depends on $p$ and $d$ such that the following holds. For all $k\geq1$ and $C>\max\{C_0,C_1\}$, if $\lambda$ is such that \eqref{fund_property} holds, then $(\emph{CAS}_{k+1})$ holds.
\end{claim}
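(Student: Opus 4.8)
\textbf{Proof plan for Claim~\ref{claim:induction}.}

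The plan is to prove $(\text{CAS}_{k+1})$ directly: assuming $Q_{k+1}(i)$ has positive feedback and that all fundamental properties hold up to scale $k$, we want to show every $(k-1)$-box... wait, every $k$-box in $\mathcal{W}^{\text{inn}}(k+1,i)$ has positive feedback. By the definition of positive feedback at scale $k+1$, there is some $Q_k(\ell)\in\mathcal{W}^{\text{inn}}(k+1,i)$ with positive feedback entered early, namely $\tau_{k,\ell} < \tau_{k+1,i} + \tfrac{3}{500}r_{k+1}L_{k+1}$. The strategy is then: take an arbitrary target box $Q_k(j_0)\in\mathcal{W}^{\text{inn}}(k+1,i)$; suppose for contradiction it has negative feedback; and derive two competing bounds on $\tau_{k,j_0}$. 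An upper bound comes from propagating positive feedback from $Q_k(\ell)$ to $Q_k(j_0)$ along a path of $k$-boxes inside $\mathcal{W}^{\text{inn}}(k+1,i)$ (this is possible by Remark~\ref{rem:completion}, which guarantees $\mathcal{W}^{\text{inn}}$ is connected, with path length at most $\tfrac{3dL_{k+1}}{L_k}+A\sigma$), applying $(\text{Fast}_k)$ at each step to accumulate an error of at most $2r_kL_k$ per box. A lower bound comes from Lemma~\ref{lem:NF_control} applied at scale $k$: since $Q_k(j_0)$ is a wonderful box of negative feedback in the good box $Q_{k+1}(i)$, its entrance time is delayed by roughly $C\omega_k L_k \cdot \tfrac{3}{L_k}\text{dist}(Q_k(j_0),\partial Q_{k+1}(i))$, and since $j_0$ lies in $Q_{k+1}^{\text{inn}}(i)$, this distance is of order $L_{k+1}$.

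More precisely, the first step is to establish the upper bound. Following the path $\pi_k$ of $k$-boxes from $Q_k(\ell)$ to $Q_k(j_0)$ inside $\mathcal{W}^{\text{inn}}(k+1,i)$ — all of which are good since wonderful boxes are good (and boundary-of-cluster boxes lying in $\mathcal{W}^{\text{inn}}$ are also good) — one applies $(\text{Fast}_k)$ inductively along the path. The subtlety is that $(\text{Fast}_k)$ as stated only propagates the bound \emph{from} a positive-feedback box \emph{to} a neighbour; to chain it one needs each box on the path to actually \emph{have} positive feedback. This is exactly what we are trying to prove, so the argument must be set up as: among the boxes on the path, consider the first one (in path order from $Q_k(\ell)$) that does \emph{not} have positive feedback — but then its predecessor does, so by $(\text{Fast}_k)$ it is entered within $2r_kL_k$ of its predecessor, and one shows inductively each box up to that point is entered within $\tau_{k+1,i}+\tfrac{3}{500}r_{k+1}L_{k+1} + (\text{path length})\cdot 2r_kL_k$; combined with the definition of positive feedback at scale $k$ (a good $k$-box entered sufficiently early relative to $\tau_{k+1,i}$... actually relative to its \emph{own} entrance, via the existence of an early positive-feedback $(k-1)$-box), one derives that the box in fact \emph{does} have positive feedback, contradicting minimality. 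This forces \emph{every} box on the path, including $Q_k(j_0)$, to have positive feedback — which is already the conclusion, once we check the quantitative input. So really the bulk of the work is the bookkeeping: showing that an entrance time of at most $\tau_{k+1,i} + \tfrac{3}{500}r_{k+1}L_{k+1} + \left(\tfrac{3dL_{k+1}}{L_k}+A\sigma\right)\cdot 2r_kL_k$ is early enough to force positive feedback at scale $k$, which will pin down the constant $C_1$ and will use the precise recursive definitions \eqref{def:r_k} of $r_k$ and \eqref{eq:L_k_def} of $L_k$, together with the bound $r<2r_1$ from Remark~\ref{rem:r}.

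The alternative (and cleaner) organization, which I would prefer, uses Lemma~\ref{lem:NF_control} to get the contradiction without the minimality argument: assume $Q_k(j_0)\in\mathcal{W}^{\text{inn}}(k+1,i)$ has negative feedback. If $Q_k(j_0)$ is a wonderful box of $Q_{k+1}(i)$, Lemma~\ref{lem:NF_control} gives $\tau_{k,j_0} > \tau_{k+1,i} + C\omega_k L_k\left(\tfrac{3}{L_k}\text{dist}(Q_k(j_0),\partial Q_{k+1}(i)) - a_4\right)$; since $Q_k(j_0)\subset Q_{k+1}^{\text{inn}}(i)$, we have $\text{dist}(Q_k(j_0),\partial Q_{k+1}(i))\geq L_{k+1}/1000$, so the right-hand side is at least $\tau_{k+1,i} + C\omega_k\left(\tfrac{3L_{k+1}}{1000} - a_4 L_k\right)$. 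On the other hand, propagating positive feedback from $Q_k(\ell)$ along $\pi_k$ (this time we \emph{do} need that all boxes on $\pi_k$ except possibly $Q_k(j_0)$ itself have positive feedback — handled by the minimality trick above, or by an induction on $j_0$ ordered by entrance time within $\mathcal{W}^{\text{inn}}$) gives $\tau_{k,j_0} < \tau_{k+1,i} + \tfrac{3}{500}r_{k+1}L_{k+1} + \left(\tfrac{3dL_{k+1}}{L_k}+A\sigma\right)2r_kL_k$. One then checks that for $L_1$ large (so $\omega_k>1/2$, $r_k<2r_1$, and $L_{k+1}/L_k = (k+1)^2 L_k^{d-1}$ is enormous, swamping the $a_4L_k$ and $A\sigma L_k$ corrections) and for $C$ larger than some $C_1=C_1(p,d)$, the lower bound strictly exceeds the upper bound — contradiction. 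Hence every box in $\mathcal{W}^{\text{inn}}(k+1,i)$ has positive feedback. The one remaining subtlety is handling target boxes in $\mathcal{W}^{\text{inn}}(k+1,i)$ that are \emph{not} wonderful in $Q_{k+1}(i)$ but lie on the outer boundary of some cluster $\mathcal{A}\in\mathfrak{A}(k+1,i)$; for those, $(\text{Conf}_k)$ and the source analysis show such a box cannot have negative feedback while $\mathcal{A}$ intersects $Q_{k+1}^{\text{inn}}(i)$, or one routes the propagation path through $\partial\mathcal{A}\subset\mathcal{W}^{\text{inn}}$ using the connectedness from Remark~\ref{rem:completion}.

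\textbf{Main obstacle.} The hard part is the quantitative check that the delay lower bound (scaling like $C\omega_k L_{k+1}$, with a negative correction only of order $L_k$) genuinely beats the accumulated $(\text{Fast}_k)$ error (scaling like $r_k L_{k+1}\cdot d$, via the $\tfrac{3dL_{k+1}}{L_k}$ factor times $2r_kL_k$) \emph{uniformly in $k$}, and extracting from this a threshold $C_1$ depending only on $p$ and $d$ (not on $k$ or $L_1$). This is what pins down $C_1$, and it is where the careful choice of the geometric factors $\tfrac{3}{500}$ and $\tfrac{1}{1000}$ in the definitions of positive feedback and of $Q_k^{\text{inn}}$ pays off — the slack between $\tfrac{3}{1000}$ and $\tfrac{3}{500}$ must dominate the error terms once $L_1$ is large, and one must verify the induction on $r_k,\omega_k$ does not erode this.
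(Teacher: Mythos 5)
Your second organization (the one you call ``cleaner'') is exactly the paper's proof: fix the early positive-feedback box $Q_k(i_0)\in\mathcal{W}^{\text{inn}}(k+1,i)$, take the path $\Xi_k$ to a hypothetical negative-feedback box $Q_k(j_0)$ given by Remark~\ref{rem:completion}, derive the upper bound $\tau_{k,j_0}<\tau_{k+1,i}+(\tfrac{3}{500}+6d)r_{k+1}L_{k+1}$ by chaining $(\text{Fast}_k)$, derive the lower bound $\tau_{k,j_0}>\tau_{k+1,i}+\tfrac{1}{1000}C\omega_{k+1}L_{k+1}$ from Lemma~\ref{lem:NF_control} and the $\mathcal{W}^{\text{inn}}$ geometry, and obtain $C_1=24(1+1000d)r_1$ using $r<2r_1$, $\omega>1/2$. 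Two minor remarks: the worry in your first paragraph about deducing positive feedback ``from the definition'' is not quite the right move --- the paper uses Lemma~\ref{lem:NF_control} to push the entrance time of any negative-feedback box on $\Xi_k$ up, and the first paragraph should just be discarded in favour of your second; and the ``remaining subtlety'' about non-wonderful boxes in $\mathcal{W}^{\text{inn}}(k+1,i)$ is a non-issue, since by Definition~\ref{def:W_inn} the outer-boundary boxes $\partial\mathcal{A}$ are themselves in $\mathcal{W}(k+1,i)$ (they lie in no cluster), so every box in $\mathcal{W}^{\text{inn}}(k+1,i)$ is wonderful and Lemma~\ref{lem:NF_control} applies directly, with the boundary-distance correction of order $\sigma L_k$ already absorbed into $a_5$.
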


\begin{proof}[Proof of Lemma~\ref{lem:cascading}]
   Assuming Claim~\ref{claim:induction} holds, let $C^*=\max\{r_1,C_0,C_1\}$, $C>C^*$ and $\lambda\in(0,\lambda_{C})$. 
   Then Lemma~\ref{lem:main_lemma_1}, Lemma~\ref{lem:prog_1}, Lemma~\ref{lem:fast_1}, Lemma~\ref{lem:del_1} and Lemma~\ref{lem:source_ET} give us that $(\text{P}_1^C)$ holds. 
   Claim~\ref{claim:induction} immediately gives us that $(\text{CAS}_2)$ also holds. Lemma~\ref{lem:induction1} then gives us for this choice of $C$ and $\lambda$ that $(\text{P}_2^C)$ also holds. 
   This choice of $C$ and $\lambda$ does not depend on $k$ and so by repeatedly using Claim~\ref{claim:induction} and Lemma~\ref{lem:induction1}, we see that if $C>C^*$ and $\lambda\in(0,\lambda_C)$, 
   then \eqref{cask} holds for all $k\geq1$.
\end{proof}

\begin{proof}[Proof of Claim \ref{claim:induction}]
   Let $k\geq1$ and $C>C_0$. Assume that $\lambda$ is such that \eqref{fund_property} holds and $Q_{k+1}(i)$ has positive feedback. Suppose for contradiction that there exists some $Q_{k}(j_0)\in\mathcal{W}^{\text{inn}}(k+1,i)$ that has negative feedback. Without losing generality, we may assume that $Q_{k}(j_0)$ is the $k$-box in $\mathcal{W}^{\text{inn}}(k+1,i)$ of negative feedback with the smallest entrance time, so that
   $$
   \tau_{k,j_0} = \inf\{\tau_{k,h}:Q_{k}(h)\in\mathcal{W}^{\text{inn}}(k+1,i) \text{ and } Q_{k}(h) \text{ has negative feedback}\}.
   $$
   As we assume $Q_{k+1}(i)$ has positive feedback, there exists $Q_{k}(i_0)\in\mathcal{W}^{\text{inn}}(k+1,i)$ such that 
   $$
   \tau_{k,i_0} < \tau_{k+1,i}+\tfrac{3}{500} r_{k+1}L_{k+1}.
   $$
   By Remark~\ref{rem:completion}, we may fix a path of $k$-boxes $\Xi_{k}$ from $Q_{k}(i_0)$ to $Q_{k}(j_0)$ so that $\Xi_{k}\subset\mathcal{W}^{\text{inn}}(k+1,i)$ and
   $$
   |\Xi_{k}|\leq \tfrac{3dL_{k+1}}{L_{k}} + A\sigma.
   $$
   If all $k$-boxes on $\Xi_{k}$ have positive feedback with the exception of $Q_{k}(j_0)$, then \eqref{F_k} implies that 
   \begin{align*}
   \tau_{k,j_0} & < \tau_{k,i_0} + 2r_{k}L_{k}|\Xi_{k}|,\\
   &\leq \tau_{k,i_0} + 2r_{k}L_{k}\left(\tfrac{3dL_{k+1}}{L_{k}} + A\sigma\right).
   \end{align*}
   Recalling that $Q_{k+1}(i)$ has positive feedback and $\tau_{k,i_0}<\tau_{k+1,i}+\tfrac{3}{500} r_{k+1}L_{k+1}$, from \eqref{def:r_k} and above we deduce that
   \begin{align}
   \tau_{k,j_0} & < \tau_{k+1,i} + \tfrac{3}{500} r_{k+1}L_{k+1} +  6dr_{k}L_{k+1}\left(1+\tfrac{A\sigma}{3d(k+1)^2L_{k}^{d-1}}\right), \nonumber\\
   & < \tau_{k+1,i} + \left(\tfrac{3}{500} + 6d\right)r_{k+1}L_{k+1}. \label{pf:contra_point_1}
   \end{align}
   With an upper bound for $\tau_{k,j_0}$ established in \eqref{pf:contra_point_1}, our aim is to establish a corresponding lower bound for $\tau_{k,j_0}$ that will give a contradiction.
   
   By definition of $\mathcal{W}^{\text{inn}}(k+1,i)$, if $Q_{k}(q)\in\Xi_{k}$, then 
   \begin{align}
   \tfrac{3}{L_{k}}\text{dist}(Q_{k}(q),\partial Q_{k+1}(i)) \geq \tfrac{3L_{k+1}}{1000L_{k}} -\sigma. \label{pf:k_path}
   \end{align}
   By \eqref{pf:k_path}, \eqref{D_k^C} and Lemma~\ref{lem:NF_control}, for all $Q_{k}(q)\in\Xi_{k}$, we have
   \begin{align}
   \tau_{k,q} > \tau_{k+1,i} + C\omega_{k}L_{k}\left(\tfrac{3L_{k+1}}{1000L_{k}} - a_5\right),
   \end{align}
   where $a_5>0$ is a constant that only depends on $A$ and $d$. From the above equation we deduce
   \begin{align}
   \tau_{k,q} & = \tau_{k+1,i} + \tfrac{3}{1000}C\omega_{k}L_{k+1}\left(1 - \tfrac{1000a_5}{3(k+1)^2L_{k}^{d-1}}\right)\nonumber\\
   & > \tau_{k+1,i} + \tfrac{1}{1000} C\omega_{k+1}L_{k+1}, \label{pf:contra_point_2}
   \end{align}
   where the final inequality will hold so long as $L_1$ is sufficiently large and by recalling the definition of $\omega_{k+1}$ in \eqref{def:omega_k}. This requirement on $L_1$ only depends on $a_5$ and thus can be added to the conditions on setting $L_1$ in Section~\ref{sec:setL_1}.
   
   For $k\geq1$, set
   \begin{align*}
   \tilde{C}_{k+1} = 6\left(1+1000d\right)\tfrac{r_{k+1}}{\omega_{k+1}}.
   \end{align*}
   If $C>\max\{C_0,\tilde{C}_{k+1}\}$ then all $k$-boxes on $\Xi_{k}$ necessarily are positive feedback as otherwise \eqref{pf:contra_point_1} contradicts \eqref{pf:contra_point_2}. However this leads to a contradiction, as then $Q_{k-1}(j_0)$ must also have positive feedback. Hence if $C>\max\{C_0,\tilde{C}_{k+1}\}$ and $\lambda$ is such that \eqref{fund_property} holds, then $(\text{CAS}_{k+1})$ also holds. 
   
   To remove the dependence on $k$ in setting $\lambda$, we desire that 
   \begin{align}
   C > 6\left(1+1000d\right)\tfrac{r}{\omega},  \label{eq:pick_C}
   \end{align}
   where $r$ is as given in Remark~\ref{rem:r} and $\omega$ as in Remark~\ref{rem:omega}. 
   Recall that so long as $L_1$ is sufficiently large, we have that $r<2r_1$ and $\omega>1/2$. Thus, if we set
   $$
   C_1 = 24\left(1+1000d\right)r_1,
   $$
   and let $C>C_1$, then \eqref{eq:pick_C} holds. Consequently, if $C>C_1$, then $C>\tilde{C}_{k+1}$ for all $k\geq1$, establishing the claim.
\end{proof}

\subsection{Proofs of fundamental properties at higher scales (Lemma~\ref{lem:induction1})}
\label{sec:pf_lemmmas_higher_scales}

To ease exposition in the proof for Lemma~\ref{lem:induction1}, we split the proof into four parts, one part for each fundamental property to be established.
We also note that for this proof we shall not assume Lemma~\ref{lem:cascading}, as the proof of Lemma~\ref{lem:cascading} uses Lemma~\ref{lem:induction1}.

\begin{proof}[Proof of Lemma~\ref{lem:induction1} for \eqref{progk}]
   Let $k\geq2$ and $C>C_0$. Assume that $\lambda$ is such that $(P_{k-1}^C)$ and \eqref{cask} hold. 
   Suppose that $Q_k(i)$ has negative feedback. Hence, if $Q_{k-1}(\ell)\in\mathcal{W}^{\text{inn}}(k,i)$ has positive feedback, then
   \begin{align}
   \tau_{k-1,\ell} \geq \tau_{k,i} + \tfrac{3}{500} r_kL_k. \label{eq:ell_entrance}
   \end{align}
   Suppose for contradiction that $Q_k(i^{\text{par}})$ has positive feedback. The strategy of this proof is to construct a path of wonderful $(k-1)$-boxes from where $FPP_1$ enters $Q_k(i)$ to $\mathcal{W}^{\text{inn}}(k,i)$ and use the fact that all $(k-1)$-boxes on this path must have positive feedback by the positive feedback of $Q_k(i^{\text{par}})$ and \eqref{cask}. This will imply that $Q_k(i)$ has positive feedback, leading to the desired contradiction. To begin, we make the following claim that we prove later.
   
   \begin{claim}
   \label{claim:progk}
   Suppose the assumptions above hold. There exists $(k-1)$-boxes $Q_{k-1}(i_0^*)$ and $Q_{k-1}(i_1^*)$ such that 
   \begin{enumerate}
   \item $Q_{k-1}(i_0^*)\in\mathcal{W}^{\emph{inn}}(k,i^\emph{par})$,
   \item $\tau_{k-1,i_0^*} \leq \tau_{k,i}$,
   \item $Q_{k-1}(i_1^*) \in \mathcal{W}^{\emph{inn}}(k,i) \cap \mathcal{W}^{\emph{inn}}(k,i^{\emph{par}})$,
   \end{enumerate}
   and there exists a path of $(k-1)$-boxes $\pi_{k-1}$ from $Q_{k-1}(i_0^*)$ to $Q_{k-1}(i_1^*)$ such that
   \begin{align}
   \pi_{k-1}\subset\mathcal{W}^{\emph{inn}}(k,i^{\emph{par}})\quad\text{and}\quad |\pi_{k-1}|\leq \tfrac{3L_k}{1000L_{k-1}} + A\sigma. \label{eq:shortpath}
   \end{align}
   \end{claim}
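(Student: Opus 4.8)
The plan is to obtain the three boxes and the path by a geometric bookkeeping argument resting on two observations: the site $e_{k,i}$ at which $Q_k(i)$ is first entered lies on $\partial Q_k(i)$ but deep inside $Q_k(i^{\text{par}})$, and a good box contains at most $A\sigma$ many $(k-1)$-boxes lying in clusters. First I would pin down $e_{k,i}$: since it received its occupation from a neighbour occupied strictly earlier, that neighbour cannot lie in $Q_k(i)$ (otherwise $Q_k(i)$ would have been entered before $\tau_{k,i}$), so $e_{k,i}\in\partial Q_k(i)$; and by Definition~\ref{def:parent}, $e_{k,i}\in Q_k^{\text{core}}(i^{\text{par}})$. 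Comparing the defining boxes $Q_k^{\text{core}}$, $Q_k$, $Q_k^{\text{inn}}$ forces $\|i-i^{\text{par}}\|_\infty\in\{1,2\}$ and puts $e_{k,i}$ at $\ell^\infty$-distance at least $L_k/3$ from $\partial Q_k(i^{\text{par}})$. Since $L_k=k^2L_{k-1}^d\gg L_{k-1}$, the $(k-1)$-box $Q_{k-1}(\ell_0)$ whose core contains $e_{k,i}$ satisfies $Q_{k-1}(\ell_0)\subset Q_k^{\text{inn}}(i^{\text{par}})$ and $\tau_{k-1,\ell_0}\le\tau(e_{k,i})=\tau_{k,i}$.

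To produce $Q_{k-1}(i_0^*)$ (items 1 and 2): if some $(k-1)$-box containing $e_{k,i}$ is wonderful in $Q_k(i^{\text{par}})$, take $i_0^*$ to be it. Otherwise $e_{k,i}$ lies in a cluster $\mathcal{A}$ of $Q_k(i^{\text{par}})$; since $\mathcal{A}$ meets the deeply interior set $Q_k^{\text{core}}(i^{\text{par}})$ and $\mathcal{A}\cup\partial\mathcal{A}$ has at most $\sigma\ll L_k/L_{k-1}$ boxes, all of $\mathcal{A}\cup\partial\mathcal{A}$ lies in $Q_k^{\text{inn}}(i^{\text{par}})$, so $\mathcal{A}$ is an inner cluster in $\mathfrak{A}(k,i^{\text{par}})$ and $\partial\mathcal{A}\subset\mathcal{W}^{\text{inn}}(k,i^{\text{par}})$. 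Because the cluster construction adjoins every $(k-1)$-box disconnected from infinity, the bounded region $\bigcup_{Q\in\mathcal{A}}Q$ is separated from infinity by $\bigcup_{Q\in\partial\mathcal{A}}Q$; and the set of sites occupied by time $\tau_{k,i}$ is connected in $\mathbb{Z}^d$ (each $FPP_\lambda$ component is attached, through the site that activated its seed, to the rest, all descending from the origin) and contains both the origin and $e_{k,i}$. Hence that set meets a box of $\partial\mathcal{A}$, which is therefore entered by time $\le\tau_{k,i}$; take $i_0^*$ to be that box. Either way $Q_{k-1}(i_0^*)\in\mathcal{W}^{\text{inn}}(k,i^{\text{par}})$, $\tau_{k-1,i_0^*}\le\tau_{k,i}$, and $i_0^*$ lies within $O(\sigma)$ boxes of $\ell_0$.

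To produce $Q_{k-1}(i_1^*)$ (item 3): $e_{k,i}$ lies at $\ell^\infty$-distance $\tfrac{500L_k}{1000}$ from the centre of $Q_k(i)$ (on one or more of its faces), while $Q_k^{\text{inn}}(i)$ reaches only to $\tfrac{499L_k}{1000}$, so moving $e_{k,i}$ inward across this gap of width $\tfrac{L_k}{1000}$ in each face coordinate lands us inside $Q_k^{\text{inn}}(i)$. Using also the $A\sigma$ of slack in the target bound, the collection of $(k-1)$-boxes that lie inside $Q_k^{\text{inn}}(i)\cap Q_k^{\text{inn}}(i^{\text{par}})$ (the latter automatic because the whole region lies within $\ell^\infty$-distance $O(L_k/1000)$ of $e_{k,i}$, which is deep inside $Q_k(i^{\text{par}})$) and within $\ell^\infty$-distance $\tfrac{L_k}{1000}+O(\sigma L_{k-1})$ of $e_{k,i}$ contains $\gg A\sigma$ many $(k-1)$-cores, whereas $Q_k(i)$ and $Q_k(i^{\text{par}})$ together contain at most $2A\sigma$ $(k-1)$-boxes lying in clusters; so we may choose $Q_{k-1}(i_1^*)$ there wonderful in both $Q_k(i)$ and $Q_k(i^{\text{par}})$, giving $Q_{k-1}(i_1^*)\in\mathcal{W}^{\text{inn}}(k,i)\cap\mathcal{W}^{\text{inn}}(k,i^{\text{par}})$ with $\|i_1^*-\ell_0\|_\infty\le\tfrac{3L_k}{1000L_{k-1}}+O(\sigma)$.

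Finally, I would join $Q_{k-1}(i_0^*)$ to $Q_{k-1}(i_1^*)$ by a nearly straight sequence of $(k-1)$-boxes, rerouting along the outer boundary of any cluster of $Q_k(i^{\text{par}})$ it would otherwise enter; those clusters lie within $\ell^\infty$-distance $O(L_k/1000)$ of $e_{k,i}$, hence in $\mathfrak{A}(k,i^{\text{par}})$, so their outer boundaries lie in $\mathcal{W}^{\text{inn}}(k,i^{\text{par}})$, and since there are at most $A$ clusters of outer-boundary size $\le\sigma$ the reroutings cost at most $A\sigma$ extra boxes, yielding $\pi_{k-1}\subset\mathcal{W}^{\text{inn}}(k,i^{\text{par}})$ with $|\pi_{k-1}|\le\tfrac{3L_k}{1000L_{k-1}}+A\sigma$. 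The main obstacle is the constant-sensitive geometry near $e_{k,i}$: checking that crossing from $\partial Q_k(i)$ into $Q_k^{\text{inn}}(i)$ costs no more than $\tfrac{3L_k}{1000L_{k-1}}+A\sigma$ boxes (which is exactly why the definitions use $499/1000$ and the threshold $\tfrac{3}{500}r_kL_k$), that there is genuinely enough room near $e_{k,i}$ to choose $i_1^*$ avoiding all $O(A\sigma)$ cluster boxes even when $e_{k,i}$ is a corner of $Q_k(i)$, and that the separation-from-infinity step in the cluster case is applied correctly; the remaining estimates are routine manipulations of the explicit side-lengths $L_k=k^2L_{k-1}^d$.
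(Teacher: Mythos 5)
Your argument follows the same route as the paper's: locate $e_{k,i}\in\partial Q_k(i)\cap Q_k^{\text{core}}(i^{\text{par}})$, split into the case where the $(k-1)$-box containing $e_{k,i}$ is wonderful in $Q_k(i^{\text{par}})$ versus the case where it lies in an (automatically inner) cluster $\mathcal{A}$, take $i_0^*$ accordingly (the wonderful box itself, or a box of $\partial\mathcal{A}$ entered by time $\tau_{k,i}$), and then build a short path of $(k-1)$-boxes from $i_0^*$ into $\mathcal{W}^{\text{inn}}(k,i)\cap\mathcal{W}^{\text{inn}}(k,i^{\text{par}})$, rerouting around inner clusters through their outer boundaries. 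The paper's version is terser (it simply takes the earliest-entered box of $\partial\mathcal{A}$ and asserts the bounds), but the substance — in particular the separation argument giving $\tau_{k-1,i_0^*}\le\tau_{k,i}$ and the $\tfrac{3L_k}{1000L_{k-1}}+A\sigma$ budget — is the same; your write-up is a more explicit version of the same proof.
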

   
   Assuming Claim~\ref{claim:progk} holds and $Q_k(i^{\text{par}})$ has positive feedback, we deduce that all $(k-1)$-boxes on $\pi_{k-1}$ have positive feedback as $\lambda$ is such that \eqref{cask} holds. Moreover, as $\lambda$ is such that $(\text{P}_{k-1}^C)$ holds, then $(\text{Fast}_{k-1})$ holds. Thus
   \begin{align*}
   \tau_{k-1,i^*_1} <  \tau_{k-1,i_0^*} + 2r_{k-1}L_{k-1}|\pi_{k-1}|.
   \end{align*}
   By \eqref{eq:shortpath} we deduce
   \begin{align*}
   \tau_{k-1,i^*_1} & < \tau_{k-1,i_0^*} + 2r_{k-1}L_{k-1}\left(\tfrac{3L_k}{1000L_{k-1}} + A\sigma\right)\leq\tau_{k,i} + 2r_{k-1}L_{k-1}\left(\tfrac{3L_k}{1000L_{k-1}} + A\sigma\right).
   \end{align*}
   Recalling the definition of $r_k$ in \eqref{def:r_k} where  $a_1=\tfrac{1000A\sigma}{3}$, we have
   \begin{align}
   \tau_{k-1,i^*_1} &< \tau_{k,i} + \tfrac{3}{500}r_{k-1}L_k\left(1+ \tfrac{a_1}{k^2L_{k-1}^{d-1}}\right)\nonumber\\
   & = \tau_{k,i} + \tfrac{3}{500}r_kL_k. \label{eq:progk_contra}
   \end{align}
   As $Q_{k-1}(i_1^*)\in\mathcal{W}^{\text{inn}}(k,i)$, then \eqref{eq:progk_contra} contradicts \eqref{eq:ell_entrance}. Hence it must be the case that $Q_k(i^{\text{par}})$ is either bad or has negative feedback.
\end{proof}

\begin{proof}[Proof of Claim~\ref{claim:progk}]
   Let $Q_{k-1}(i_{\text{ent}})$ be such that $e_{k,i}\in Q_{k-1}^{\text{core}}(i_{\text{ent}})$. If $Q_{k-1}(i_{\text{ent}})\in\mathcal{W}^{\text{inn}}(k,i^{\text{par}})$ then the first two conditions in Claim~\ref{claim:progk} 
   follow immediately by setting $i_0^*=i_{\text{ent}}$. The other possibility is where $Q_{k-1}(i_{\text{ent}})\in\mathcal{A}$, where $\mathcal{A}$ is an inner cluster in $Q_k(i^{\text{par}})$. 
   In this case we let $Q_{k-1}(i_0^*)$ be the $(k-1)$-box in $\partial\mathcal{A}$ with the smallest entrance time and the first two conditions of Claim~\ref{claim:progk} follow. 
   Note that $Q_{k-1}(i_{\text{ent}})$ cannot be contained in a boundary cluster of $Q_k(i^{\text{par}})$ as by definition of parent, we have $Q_{k-1}^\text{core}(i_{\text{ent}})\subset Q_k^{\text{core}}(i^{\text{par}})$.
   
   The final part of the claim follows by setting $\pi_{k-1}$ as the shortest path of $(k-1)$-boxes from $Q_{k-1}(i_0^*)$ to $\mathcal{W}^{\text{inn}}(k,i)$ that is completely contained in $\mathcal{W}^{\text{inn}}(k,i^{\text{par}})$. Since the distance between $\partial Q_k(i)$ and $\mathcal{W}^{\text{inn}}(k,i)$ is at most $\tfrac{3L_k}{1000}$, by further accounting for possible deviations around clusters of $Q_k(i)$ through their respective outer boundaries, we obtain the desired conditions.
\end{proof}
   
\begin{proof}[Proof of Lemma~\ref{lem:induction1} for \eqref{F_k}]
   Let $k\geq2$ and $C>C_0$. Assume that $\lambda$ is such that $(P_{k-1}^C)$ and \eqref{cask} hold. Suppose $Q_k(i)$ is good and $Q_k(j)$ has positive feedback so that $\|i-j\|_{\infty}=1$. As $Q_k(j)$ has positive feedback, there exists a $(k-1)$-box $Q_{k-1}(j_0)\in\mathcal{W}^{\text{inn}}(k,j)$ that has positive feedback such that 
   $$
   \tau_{k-1,j_0} < \tau_{k,j}+\tfrac{3}{500} r_kL_k.
   $$
   Fix a $(k-1)$-box $Q_{k-1}(j_1)\in\mathcal{W}^{\text{inn}}(k,j)\cap\mathcal{W}(k,i)$ such that there is a path $\pi_{k-1}$ from $Q_{k-1}(j_0)$ to $Q_{k-1}(j_1)$ with
   $$
   \pi_{k-1}\subset\mathcal{W}^{\text{inn}}(k,j)\quad\text{and}\quad|\pi_{k-1}| \leq (1-\tfrac{3}{1000})\tfrac{L_k}{L_{k-1}} + A\sigma.
   $$
   As $\lambda$ is such that $(\text{P}_{k-1}^C)$ holds, then by $(\text{Fast}_{k-1})$ we deduce that
   \begin{align*}
   \tau_{k,i} \leq \tau_{k-1,j_1} & < \tau_{k-1, j_0} + 2r_{k-1}L_{k-1}\left((1-\tfrac{3}{1000})\tfrac{L_k}{L_{k-1}} + A\sigma\right)\\
   & = \tau_{k-1,j_0} + 2r_{k-1}L_{k}\left(1-\tfrac{3}{1000}+\tfrac{A\sigma}{k^2L_{k-1}^{d-1}}\right)\\
   & < \tau_{k,j} + 2r_{k-1}L_{k}\left(1-\tfrac{3}{1000}+\tfrac{A\sigma}{k^2L_{k-1}^{d-1}}\right)+\tfrac{3}{500} r_kL_k.
   \end{align*}
   By taking the final term above inside the bracket, we deduce that
   \begin{align*}
   \tau_{k,i} & < \tau_{k,j} + 2r_{k-1}L_{k}\left(1+\tfrac{3}{1000}\left(\tfrac{r_{k}}{r_{k-1}}-1\right)+\tfrac{A\sigma}{k^2L_{k-1}^{d-1}}\right),\\
   & = \tau_{k,j}+ 2r_{k-1}L_{k}\left(1+\tfrac{2A\sigma}{k^2L_{k-1}^{d-1}}\right)\\
   & < \tau_{k,j} + 2r_{k}L_{k}
   \end{align*}
   wherein the first two lines we recall \eqref{def:r_k} and $a_1=\tfrac{1000A\sigma}{3}$. From the above inequality, we deduce \eqref{F_k} holds.
\end{proof}

\begin{proof}[Proof of Lemma~\ref{lem:induction1} for \eqref{D_k^C}]
   Let $k\geq2$, $C>C_0$ and assume that $\lambda$ is such that $(P_{k-1}^C)$ and \eqref{cask} hold. Suppose that $Q_k(i)$ and $Q_k(i^{\text{par}})$ both have negative feedback.
   
   \begin{claim}
      \label{claim:delk}
      Suppose the assumptions above hold. There exists a $(k-1)$-box $Q_{k-1}(\hat{\imath}_0)\subset Q_k(i^{\emph{par}})$ with negative feedback such that
      \begin{enumerate}
      \item $\emph{dist}(Q_{k-1}(\hat{\imath}_0),\partial Q_k(i^{\emph{par}}))\geq \tfrac{1}{3}\left(L_{k} - a_6 L_{k-1}\right)$,
      \item $\tau_{k-1,\hat{\imath}_0} < \tau_{k,i} + a_7 r_{k-1}L_{k-1},$
      \end{enumerate}
      where $a_6,a_7>0$ are constants that depend only on $A$ and $d$.
   \end{claim}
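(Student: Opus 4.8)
The plan is to take for $\hat{\imath}_0$ (essentially) the $(k-1)$-box through which the process first enters $Q_k(i)$. Let $e_{k,i}$ be the almost surely unique site of $Q_k(i)$ with $\tau(e_{k,i})=\tau_{k,i}$. The site $e_{k,i}$ is occupied from a neighbour $u\notin Q_k(i)$ — it cannot be the origin, since the box whose core contains the origin has positive feedback while $Q_k(i^{\text{par}})$ does not — so $e_{k,i}\in\partial Q_k(i)$ and $\tau(u)<\tau_{k,i}$. By Definition~\ref{def:parent} we have $e_{k,i}\in Q_k^{\text{core}}(i^{\text{par}})$, which forces $\|i-i^{\text{par}}\|_\infty\leq 1$ and $Q_k^{\text{core}}(i^{\text{par}})\subset Q_k(i)$; recall that $Q_k^{\text{core}}(i^{\text{par}})$ lies at distance $L_k/3$ from $\partial Q_k(i^{\text{par}})$. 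Let $Q_{k-1}(i_{\text{ent}})$ be the $(k-1)$-box with $e_{k,i}\in Q_{k-1}^{\text{core}}(i_{\text{ent}})$; then $u\in Q_{k-1}(i_{\text{ent}})$ as well. Up to the $O(L_{k-1})$ mismatch between the $(k-1)$- and $k$-core grids, $Q_{k-1}^{\text{core}}(i_{\text{ent}})\subset Q_k^{\text{core}}(i^{\text{par}})$, so $\text{dist}(Q_{k-1}(i_{\text{ent}}),\partial Q_k(i^{\text{par}}))\geq\tfrac13(L_k-a_6L_{k-1})$ for a constant $a_6$; this is condition~(1) for $\hat{\imath}_0=i_{\text{ent}}$, and $\tau_{k-1,i_{\text{ent}}}\leq\tau(e_{k,i})=\tau_{k,i}$ gives condition~(2).

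It remains to produce a box $\hat{\imath}_0$ — to be taken at $(k-1)$-box distance at most $O(\sigma)$ from $i_{\text{ent}}$, so that condition~(1) survives and the slack $a_7r_{k-1}L_{k-1}$ in condition~(2) pays for the at most $O(\sigma)$ applications of $(\mathrm{Fast}_{k-1})$ needed to relocate $i_{\text{ent}}$ to $\hat{\imath}_0$, whence $a_7=a_7(A,d)$ — that is wonderful in $Q_k(i^{\text{par}})$ and has negative feedback. If $Q_{k-1}(i_{\text{ent}})$ lies in a cluster $\mathcal{A}$ of $Q_k(i^{\text{par}})$, we work instead with a box of $\partial\mathcal{A}$: since $\mathcal{A}$ meets $Q_k^{\text{inn}}(i^{\text{par}})$, the outer boundary $\partial\mathcal{A}$ consists of at most $\sigma$ wonderful $(k-1)$-boxes lying in $\mathcal{W}^{\text{inn}}(k,i^{\text{par}})$, and any entry of $\mathcal{A}$ by either process (in particular the one reaching $e_{k,i}$) must first cross $\partial\mathcal{A}$, so some box of $\partial\mathcal{A}$ is entered before $\tau_{k,i}$. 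Thus it suffices to treat a wonderful $(k-1)$-box $Q_{k-1}(w)\in\mathcal{W}^{\text{inn}}(k,i^{\text{par}})$ with $\tau_{k-1,w}\leq\tau_{k,i}$ and $\text{dist}(Q_{k-1}(w),\partial Q_k(i^{\text{par}}))\geq\tfrac13(L_k-a_6L_{k-1})$ (now with $a_6=a_6(A,d)$, absorbing the size of $\mathcal{A}$), and to show that some box $\hat{\imath}_0$ at $(k-1)$-box distance $O(\sigma)$ from $w$ is wonderful with negative feedback. Assume towards a contradiction that no such box exists. Then, since a connected set of wonderful good $(k-1)$-boxes all of positive feedback propagates a fast entrance time through $(\mathrm{Fast}_{k-1})$, and since $\mathcal{W}^{\text{inn}}(k,i)$ and $\mathcal{W}^{\text{inn}}(k,i^{\text{par}})$ overlap in a set of diameter $\Theta(L_k)$, positive feedback starting from $Q_{k-1}(w)$ would reach a $(k-1)$-box of $\mathcal{W}^{\text{inn}}(k,i)$ by a time strictly smaller than $\tau_{k,i}+\tfrac{3}{500}r_kL_k$; combined with $(\mathrm{CAS}_{k-1})$ cascaded down to scale $1$ (which traces the relevant occupancy back to $FPP_1$) this contradicts the negative feedback of $Q_k(i)$. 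Carrying out this argument according to whether $e_{k,i}$ was occupied by $FPP_1$ or by $FPP_\lambda$ — and, in the latter case, tracing back through the fundamental properties at scales $\leq k-1$ the seed from which $FPP_\lambda$ sprang — yields the required $\hat{\imath}_0$.

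The step I expect to be the main obstacle is exactly this last one: ruling out that the candidate box ($Q_{k-1}(w)$, equivalently $Q_{k-1}(i_{\text{ent}})$) has \emph{positive} feedback. The negative feedback of $Q_k(i^{\text{par}})$ by itself only yields the far-too-weak inequality $\tau_{k,i}\geq\tau_{k,i^{\text{par}}}+\tfrac{3}{500}r_kL_k$, so one must exploit that $Q_k(i)$ is \emph{also} of negative feedback; and since $e_{k,i}\in\partial Q_k(i)$, the candidate box is never itself in $\mathcal{W}^{\text{inn}}(k,i)$ and, the time budget $a_7r_{k-1}L_{k-1}$ being only $O(L_{k-1})$, cannot be moved into $\mathcal{W}^{\text{inn}}(k,i)$ — which forces the indirect argument above. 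The estimate to be made precise is that propagating positive feedback across the gap of width $\tfrac{L_k}{1000}$ separating $\partial Q_k(i)$ from $Q_k^{\text{inn}}(i)$ costs time at most $\tfrac{1}{500}r_{k-1}L_k+O(\sigma r_{k-1}L_{k-1})<\tfrac{3}{500}r_kL_k$; this inequality (using $r_{k-1}<r_k<2r_1$ and $L_k\gg L_{k-1}$) is where the factor $\tfrac{3}{500}$ in the definition of positive feedback beats the factor $\tfrac{1}{1000}$ in the definition of $Q_k^{\text{inn}}$, and it is here that $a_6$, $a_7$ — and, downstream in the proof of $(\mathrm{Del}_k^C)$, the constant $a_3$ entering $C_0=8\sigma r_1a_3$ — get fixed.
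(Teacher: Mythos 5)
Your setup matches the paper's: you correctly take $Q_{k-1}(i_{\text{ent}})$ with $e_{k,i}\in Q_{k-1}^{\text{core}}(i_{\text{ent}})$ as the starting point, correctly note it must lie inside $Q_k^{\text{core}}(i^{\text{par}})$ (giving condition~(1) up to $O(L_{k-1})$ slack), and correctly handle the case where $i_{\text{ent}}$ sits in an inner cluster by passing to $\partial\mathcal{A}$. You also correctly identify the crux: the claim fails only if every wonderful $(k-1)$-box near $i_{\text{ent}}$ has positive feedback, and one must somehow use the negative feedback of $Q_k(i)$ itself to rule this out.

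The gap is in how you rule it out. You assume for contradiction that no wonderful negative-feedback box lies within $O(\sigma)$ of $w$, and then argue that positive feedback would then propagate via $(\text{Fast}_{k-1})$ all the way across the $\Theta(L_k/1000)$-wide annulus into $\mathcal{W}^{\text{inn}}(k,i)$ in time $<\tfrac{3}{500}r_kL_k$, contradicting the negative feedback of $Q_k(i)$. But your hypothesis only excludes negative-feedback boxes \emph{near} $w$; it leaves open the possibility that the path from $w$ toward $\mathcal{W}^{\text{inn}}(k,i)$ hits a first negative-feedback box $Q_{k-1}(j_0)$ at a \emph{medium} distance — say $\Theta(L_k/L_{k-1})$ boxes from $w$. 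Such a $j_0$ breaks your propagation argument without contradicting your assumption, and it is of no use for the claim because its entrance time could a priori be as large as $\tau_{k,i}+\Theta(r_{k-1}L_k)$, far exceeding the budget $a_7 r_{k-1}L_{k-1}$ in condition~(2). The paper closes exactly this gap via Lemma~\ref{lem:NF_control}, which (through $(\text{Prog}_{k-1})$, $(\text{Conf}_{k-1})$, and crucially $(\text{Del}_{k-1}^C)$) gives a \emph{lower} bound on the entrance time of any wonderful negative-feedback $(k-1)$-box in $Q_k(i)$ that grows linearly in $\text{dist}(Q_{k-1}(j_0),\partial Q_k(i))$, with slope proportional to $C\omega_{k-1}$. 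Pitted against the $(\text{Fast}_{k-1})$-based \emph{upper} bound $\tau_{k-1,j_0}\lesssim \tau_{k,i}+2r_{k-1}L_{k-1}|\pi_{k-1}^{j_0}|$ along the positive-feedback prefix of the path, and using $C>C_0$, this forces $\text{dist}(Q_{k-1}(j_0),\partial Q_k(i))\leq a_4 L_{k-1}/3$, i.e.\ $j_0$ lies within $O(1)$ $(k-1)$-boxes of $i_{\text{ent}}$. That is the quantitative input your proposal is missing; it is where $a_3$ (hence $C_0$) is actually fixed, inside this very proof, and not merely downstream. Two smaller points: cascading $(\text{CAS}_{k-1})$ all the way to scale~$1$ is unnecessary, since positive feedback at scale $k$ is defined purely in terms of a $(k-1)$-box in $\mathcal{W}^{\text{inn}}(k,i)$ being reached quickly; and the case split according to whether $e_{k,i}$ was occupied by $FPP_1$ or $FPP_\lambda$ belongs to the scale-$1$ lemmas (Lemmas~\ref{lem:prog_1} and~\ref{lem:del_1}), not here.
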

   Let $Q_{k-1}(\hat{\imath}_0)$ be as given in the claim above. As $\lambda$ is such that $(\text{P}_{k-1}^C)$ holds, then by $(\text{Del}_{k-1}^C)$, Lemma~\ref{lem:NF_control} and Claim~\ref{claim:delk}, we deduce
   \begin{align}
   \tau_{k-1,\hat{\imath}_0} & > \tau_{k,i^{\text{par}}}+ C\omega_{k-1}L_{k-1}\left(\tfrac{3}{L_{k-1}}\text{dist}(Q_{k-1}(\hat{\imath}_0),\partial Q_k(i^{\text{par}})) - a_4\right),\nonumber\\
   & \geq \tau_{k,i^{\text{par}}} + C\omega_{k-1}L_k\left(1 - \tfrac{a_4+a_6}{k^2L_{k-1}^{d-1}}\right). \label{eq:hatent1}
   \end{align}
   Moreover, by Claim~\ref{claim:delk} and \eqref{eq:hatent1} we deduce that
   \begin{align}
   \tau_{k,i} & > \tau_{k-1,\hat{\imath}_0} - a_7 r_{k-1}L_{k-1},\nonumber\\
   & > \tau_{k,i^{\text{par}}} + C\omega_{k-1}L_k\left(1 - \tfrac{a_4+a_6+a_7 r_{k-1}}{k^2L_{k-1}^{d-1}}\right). \label{eq:delay1}
   \end{align}
   If we set
   \begin{align*}
   a_2 = a_4+a_6+2a_7 r_1,
   \end{align*}
   then $a_2>a_4+a_6+a_7 r_{k-1}$ for all $k\geq2$ by Remark~\ref{rem:r} for a sufficiently large choice of $L_1$, and only depends on $A$, $d$ and $p$.
   In particular, by \eqref{eq:delay1} we have
   $$
   \tau_{k,i} > \tau_{k,i^{\text{par}}} + C\omega_{k-1}\left(1-\tfrac{a_2}{k^2L_{k-1}^{d-1}}\right),
   $$
   establishing the result.
\end{proof}

\begin{proof}[Proof of Claim~\ref{claim:delk}]
   Let $Q_{k-1}(i_{\text{ent}})$ be a $(k-1)$-box such that $e_{k,i}\in Q_{k-1}^{\text{core}}(i_{\text{ent}})$. If $Q_{k-1}(i_{\text{ent}})\in\mathcal{W}(k,i^{\text{par}})$ has negative feedback, 
   then set $\hat \imath_0 = i_{\text{ent}}$. Another simple case is when $Q_{k-1}(i_{\text{ent}})$ is contained in some poorly confined inner cluster of $Q_k(i^{\text{par}})$ 
   which must have a negative feedback $(k-1)$-box in its outer-boundary; the one with the smallest entrance time can be chosen to be $Q_{k-1}(\hat\imath_0)$ as $\lambda$ is such that \eqref{confk} holds.
   Note that $Q_{k-1}(i_{\text{ent}})$ cannot be contained in a boundary cluster of $Q_k(i^{\text{par}})$, as by definition of parent we have that $Q_{k-1}^\text{core}(i_{\text{ent}})\subset Q_k^{\text{core}}(i^{\text{par}})$.
   
   The two remaining cases to consider are when $Q_{k-1}(i_{\text{ent}})\in\mathcal{W}(k,i^{\text{par}})$ has positive feedback or is contained in a successfully confined inner cluster of $Q_k(i^{\text{par}})$. The idea of the proof for these cases is to find a $(k-1)$-box $Q_{k-1}(i_0)\in\mathcal{W}(k,i^{\text{par}})$ with positive feedback nearby to $Q_{k-1}(i_{\text{ent}})$ whose entrance time is not much larger than $\tau_{k-1,i_{\text{ent}}}$. From $Q_{k-1}(i_0)$ we fix a path of $(k-1)$-boxes $\pi_{k-1}$ to $\mathcal{W}^{\text{inn}}(k,i)$, so that 
   \begin{equation}
   \label{eq:pi_k-1_cond}
   \pi_{k-1}\subset\mathcal{W}^{\text{core}}(k,i^{\text{par}})\quad\text{and}\quad|\pi_{k-1}|\leq\tfrac{3L_{k}}{1000L_{k-1}}+A\sigma,
   \end{equation}
   where $\mathcal{W}^{\text{core}}(k,i^{\text{par}})$ is the set of wonderful $(k-1)$-boxes in $Q_{k}(i^{\text{par}})$ that are either contained in $Q_{k}^{\text{core}}(i^{\text{par}})$ or in the outer boundary of an inner cluster that intersects $Q_k^{\text{core}}(i^{\text{par}})$. 
   As $\lambda$ is such that $(\text{Fast}_{k-1})$ holds, if all $(k-1)$-boxes on $\pi_{k-1}$ have positive feedback then $Q_k(i)$ must have positive feedback which is a contradiction. Hence there exists some $(k-1)$-box on $\pi_{k-1}$ with negative feedback and we prove that this  box is necessarily nearby to $Q_{k-1}(i_{\text{ent}})$, completing the proof.
   
   Now we implement the idea above rigorously. If $Q_{k-1}(i_{\text{ent}})\in\mathcal{W}(k,i^{\text{par}})$ has positive feedback then we simply set $i_0=i_{\text{ent}}$. 
   Otherwise, if $Q_{k-1}(i_{\text{ent}})$ is contained in a successfully confined inner cluster of $Q_k(i^{\text{par}})$, $\mathcal{A}$ say, then we may assume that all $(k-1)$-boxes within $\partial\mathcal{A}$ have positive feedback as otherwise we may set $\hat{\imath}_0$ to correspond to one of these boxes with negative feedback. Note that $\mathcal{A}$ intersects $Q_k^{\text{core}}(i^{\text{par}})$. Hence we may choose $Q_{k-1}(i_0)$ to be the $(k-1)$-box in $\partial\mathcal{A}$ with earliest entrance time. Through this construction, we have
   \begin{align}
   \tau_{k-1,i_0} \leq \tau_{k,i}. \label{eq:PF_cons}
   \end{align}
   
	Fix a geodesic path of $(k-1)$-boxes from $Q_{k-1}(i_0)$ to $\mathcal{W}_{k}^{\text{inn}}(i)\cap Q_k^{\text{core}}(i^{\text{par}})$, that is, let this path be the shortest path of $(k-1)$-boxes from $Q_{k-1}(i_0)$ to some $(k-1)$-box contained in $\mathcal{W}_{k}^{\text{inn}}(i)\cap Q_k^{\text{core}}(i^{\text{par}})$. 
	Let $\pi_{k-1}$ be the augmentation of this path that deviates through the outer boundary of clusters in $Q_k(i^{\text{par}})$ it intersects, so that $\pi_{k-1}$ ends at some $(k-1)$-box $Q_{k-1}(j)\in\mathcal{W}^{\text{inn}}(k,i)\cap Q_k^{\text{core}}(i^\text{par})$ and $\pi_{k-1}$ satisfies~\eqref{eq:pi_k-1_cond}.
	This construction is possible as $Q_{k-1}(i_0)\in\partial\mathcal{A}\subset\mathcal{W}^{\text{core}}(k,i^{\text{par}})$ and we may always choose the deviations so that such a $Q_{k-1}(j)$ exists.
	
	If $\pi_{k-1}\subset\mathcal{W}(k,i)$, then define $\pi_{k-1}^{\text{end}}=\pi_{k-1}$ and $\pi_{k-1}^{\text{start}}=\emptyset$.
	Otherwise, let $\pi_{k-1}^{\text{end}}$ be the subpath of $\pi_{k-1}$ given by starting from $Q_{k-1}(j)$ and backwards traversing $\pi_{k-1}$ until there is a $(k-1)$-box not contained in $Q_k^{\text{core}}(i^{\text{par}})$, and terminating at the $(k-1)$-box immediately before this. Hence, if $Q_{k-1}\in\pi_{k-1}^{\text{end}}$, then $Q_{k-1}\subset Q_k^{\text{core}}(i^{\text{par}})$ and $Q_{k-1}\in\mathcal{W}(k,i)$. Let $\pi_{k-1}^{\text{start}}=\pi_{k-1}\setminus\pi_{k-1}^{\text{end}}$ and note that the number of $(k-1)$-boxes in $\pi_{k-1}^{\text{start}}$ is bounded above by a constant that depends only on $A$ and $d$. This is because in the construction above, $\pi_{k-1}$ will only need to be outside $Q_{k}^{\text{core}}(i^{\text{par}})$ to deviate around inner clusters in $Q_k(i^{\text{par}})$.
   
   As $Q_k(i)$ has negative feedback, there must exist some $Q_{k-1}(j_0)\in\pi_{k-1}$ that has negative feedback and without losing generality we may assume that it is the $(k-1)$-box on $\pi_{k-1}$ with negative feedback that is first discovered by traversing $\pi_{k-1}$ from $Q_{k-1}(i_0)$. Indeed, if such a $Q_{k-1}(j_0)$ did not exist, then $(\text{Fast}_{k-1})$ and \eqref{eq:pi_k-1_cond} would imply that $Q_k(i)$ has positive feedback. Let $\pi_{k-1}^{j_0}$ be the sub-path of $\pi_{k-1}$ from $Q_{k-1}(i_0)$ to $Q_{k-1}(j_0)$.
   
   If $Q_{k-1}(j_0)\in\pi_{k-1}^{\text{start}}$, then $\pi_{k-1}^{j_0}\subset\pi_{k-1}^{\text{start}}$. In this case, there exists constants $a_8,a_9>0$ that depend only on $A$ and $d$ such that 
   \begin{align}
   \tfrac{3}{L_{k-1}}\text{dist}(Q_{k-1}(j_0), \partial Q_k(i))\leq a_8\quad\text{and}\quad|\pi_{k-1}^{j_0}|\leq a_9. \label{dist_cond1}
   \end{align}
   If $Q_{k-1}(j_0)\in\pi_{k-1}^{\text{end}}$, then $Q_{k-1}(j_0)\subset Q_{k}^{\text{core}}(i^{\text{par}})\subset Q_k(i)$. As $C>C_0$ and $\lambda$ is such that $(\text{P}_{k-1}^C)$ holds, by Lemma~\ref{lem:NF_control}, then
   \begin{align}
   \tau_{k-1,j_0} > \tau_{k,i} + C\omega_{k-1}L_{k-1}\left(\tfrac{3}{L_{k-1}}\text{dist}(Q_{k-1}(j_0), \partial Q_k(i))-a_4\right). \label{C_cont1}
   \end{align}
   In other words, if $Q_{k-1}(j_0)\subset Q_k(i)$ but is not close to the boundary of $Q_k(i)$, we can use Lemma~\ref{lem:NF_control} to deduce a non-trivial lower bound on the difference between $\tau_{k-1,j_0}$ and $\tau_{k,i}$. 
   In this case,
   \begin{align}
   |\pi_{k-1}^{j_0}| \leq \tfrac{3}{L_{k-1}}\text{dist}(Q_{k-1}(j_0), \partial Q_k(i)) + A\sigma. \label{C_pre1}
   \end{align}
   By construction, every $(k-1)$-box on $\pi_{k-1}^{j_0}$ has positive feedback except for $Q_{k-1}(j_0)$. Thus, as $\lambda$ is such that $(\text{Fast}_{k-1})$ holds, then by \eqref{eq:PF_cons} we deduce
   \begin{align}
   \tau_{k-1,j_0} &< \tau_{k-1,i_0} + 2r_{k-1}L_{k-1}|\pi_{k-1}^{j_0}|,\nonumber\\
   & \leq \tau_{k,i} + 2r_{k-1}L_{k-1}\left(\tfrac{3}{L_{k-1}}\text{dist}(Q_{k-1}(j_0), \partial Q_k(i))+ A\sigma\right). \label{C_cont2}
   \end{align}
   If $\tfrac{3}{L_{k-1}}\text{dist}(Q_{k-1}(j_0), \partial Q_k(i))>a_4$ and $C>0$ is large enough so that 
   \begin{align}
   \label{eq:C_ineq_pf_claim}
   C\omega_{k-1}&\left(\tfrac{3}{L_{k-1}}\text{dist}(Q_{k-1}(j_0), \partial Q_k(i))-a_4\right)\nonumber\\
   & >  2r_{k-1}\left(\tfrac{3}{L_{k-1}}\text{dist}(Q_{k-1}(j_0), \partial Q_k(i))+ A\sigma\right),
   \end{align}
   then \eqref{C_cont1} contradicts \eqref{C_cont2}. Through rearranging the above inequality, we deduce that \eqref{eq:C_ineq_pf_claim} holds if 
   \begin{equation}
   C > \tfrac{2\sigma r_{k-1}}{\omega_{k-1}}a_3, \nonumber
   \end{equation}
   where $a_3>1$ is a constant that only depends on $A$ and $d$ which is exactly the choice of $a_3$ in the definition of $C_0$ in \eqref{def:C_0}. Moreover, if $C>C_0$ and $L_1$ is large enough so that $r<2r_1$ and $\omega>1/2$, then the above inequality implies that \eqref{eq:C_ineq_pf_claim} follows for all $k\geq2$. Consequently,
   \begin{align}
   \tfrac{3}{L_{k-1}}\text{dist}(Q_{k-1}(j_0), \partial Q_k(i))\leq a_4. \label{C_case1}
   \end{align}
   The claim follows through \eqref{dist_cond1}, \eqref{C_pre1}, \eqref{C_case1} and \eqref{F_k} by setting $\hat{\imath}_0=j_0$, $a_6=\max\{a_4,a_8\}$ and $a_7=\max\{a_4+A\sigma,a_9\}$.
\end{proof}

\begin{proof}[Proof of Lemma~\ref{lem:induction1} for \eqref{confk}]
   This is an immediate consequence of the proofs for the three fundamental properties above and Lemma~\ref{lem:source_ET}.
\end{proof}

\section{Proofs of Theorem~\ref{maintheorem} and Theorem~\ref{thm:strongsurvival}}
\label{sec:Proof_Thm1}

In this section we prove Theorem~\ref{maintheorem} and Theorem~\ref{thm:strongsurvival}. First we state and prove the following lemma.

\begin{lemma}
\label{lem:pp}
If $L_1$ is large enough and $\lambda$ small enough, then with positive probability $Q_k(o)$ has positive feedback for all $k\geq1$.
\end{lemma}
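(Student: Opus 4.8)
The plan is to deduce the lemma directly from Lemma~\ref{lem:flawless}, exploiting the fact that the origin has entrance time $0$ and hence trivially satisfies the timeliness requirement in the definition of positive feedback at every scale. First I would fix $L_1$ large enough and $\lambda$ small enough that all the conditions imposed so far are met; in particular, by Lemma~\ref{lem:flawless}, the event $\{o\text{ is flawless}\}$ then has positive probability, and it suffices to show that on this event $Q_k(o)$ has positive feedback for every $k\geq 1$.

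Working on $\{o\text{ is flawless}\}$, I would first extract three facts. (i) Every $Q_k(o)$ is good: if some $Q_k(o)$ were bad, then since $L_k\ll L_{k+1}$ it would be a bad $k$-box contained in $Q_{k+1}(o)$, hence contained in a cluster of $Q_{k+1}(o)$; since $o\in Q_k^{\text{core}}(o)\subset Q_k(o)$ and $Q_{k+1}(o)\in\mathfrak{Q}_{k+1}^o$, this would put $o$ in a cluster of a box containing $o$, contradicting flawlessness. (ii) For each $k\geq 2$, $Q_{k-1}(o)\in\mathcal{W}^{\text{inn}}(k,o)$: by flawlessness $Q_{k-1}(o)$ lies in no cluster of $Q_k(o)$, so $Q_{k-1}(o)\in\mathcal{W}(k,o)$, and since $L_{k-1}/2<499L_k/1000$ we have $Q_{k-1}(o)\subset Q_k^{\text{inn}}(o)$, so $Q_{k-1}(o)\in\mathcal{W}(k,o)\cap Q_k^{\text{inn}}(o)\subset\mathcal{W}^{\text{inn}}(k,o)$. (iii) Since $\eta_0(o)=1$ and all entrance times are nonnegative, $\tau(o)=0$, and therefore $\tau_{k,o}=0$ for all $k\geq 1$.

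Then I would prove $Q_k(o)$ has positive feedback by induction on $k$. For $k=1$: flawlessness gives $o\in\mathcal{C}_1(o)$, and $o\in Q_1^{\text{core}}(o)$ so $o\notin\partial Q_1(o)$, hence $o\in\mathcal{C}^{-}_1(o)$; as $o$ is occupied by $FPP_1$ with $\tau(o)=0=\tau_{1,o}\leq\tau_{1,o}+r_1L_1$, the good box $Q_1(o)$ has positive feedback. For the step $k-1\to k$ (with $k\geq 2$): assuming $Q_{k-1}(o)$ has positive feedback, facts (i)--(iii) give that $Q_k(o)$ is good, that $Q_{k-1}(o)\in\mathcal{W}^{\text{inn}}(k,o)$ has positive feedback, and that $\tau_{k-1,o}=0<\tfrac{3}{500}r_kL_k=\tau_{k,o}+\tfrac{3}{500}r_kL_k$; this is exactly the condition in the definition of positive feedback at scale $k$, so $Q_k(o)$ has positive feedback. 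This completes the induction.

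The genuinely hard work is all upstream — Lemma~\ref{lem:flawless} together with the recursion behind it (Lemmas~\ref{lem:prob_good_1_box} and~\ref{lem:prob_good_k_box}) — so the argument here is short. The one place I expect to need care is facts (i) and (ii): verifying that flawlessness of the single site $o$ is strong enough to force simultaneously that every $Q_k(o)$ is good and that $Q_{k-1}(o)\in\mathcal{W}^{\text{inn}}(k,o)$. This hinges on the observation that a bad $k$-box always belongs to a cluster of the enclosing $(k+1)$-box, combined with the large overlap of boxes and the rapid growth $L_k=k^2L_{k-1}^d$; I would isolate it as a short preliminary claim.
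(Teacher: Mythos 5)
Your proof is correct and follows essentially the same route as the paper's: reduce to the event that the origin is flawless via Lemma~\ref{lem:flawless}, then induct on $k$ using that $\tau_{k,o}=0$, that $Q_{k-1}(o)\in\mathcal{W}^{\text{inn}}(k,o)$, and that $o\in\mathcal{C}^{-}_1(o)$. You merely spell out in more detail the facts (every $Q_k(o)$ is good, $Q_{k-1}(o)\in\mathcal{W}^{\text{inn}}(k,o)$, base case) that the paper treats as immediate consequences of flawlessness.
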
 

\begin{proof}
By Lemma~\ref{lem:flawless} it suffices to prove that if the origin is flawless, then $Q_k(o)$ has positive feedback for all $k\geq1$. We proceed by induction and note that the case $k=1$ is immediate from the definition of positive feedback at scale 1. Now suppose $k>1$ and that the result holds up to scale $k-1$. As a consequence of the origin being flawless, we have
$$
Q_{k-1}(o)\in\mathcal{W}^{\text{inn}}(k,o),
$$
and by the inductive hypothesis $Q_{k-1}(o)$ has positive feedback. Moreover, as $\tau_{k-1,o}=\tau_{k,o}=0$ and $\eta_0(o)=1$, we have that $Q_k(o)$ has positive feedback at scale $k$.
\end{proof}

\begin{proof}[Proof of Theorem~\ref{maintheorem}]
	Fix $L_1$ large enough and then $\lambda$ small enough so that Lemma~\ref{lem:cascading} and Lemma~\ref{lem:pp} hold. Assume that $Q_\kappa(o)$ has positive feedback for all $\kappa\geq1$.
	Fix a large scale $k$ and consider $Q_{k}(o)$. 
	By Lemma~\ref{lem:cascading}, every $(k-1)$-box contained in $\mathcal{W}^{\text{inn}}(k,o)$ has positive feedback. 
	For ease of argument, let us focus on $(k-1)$-cores in $Q_k^{\text{core}}(o)\cap\mathcal{W}^{\text{inn}}(k,o)$. 
	The number of $(k-1)$-cores contained in $Q_k^{\text{core}}(o)\cap\mathcal{W}^{\text{inn}}(k,o)$ is at least
\begin{align*}
\left(\tfrac{L_k}{L_{k-1}}\right)^d - A\sigma = \left(\tfrac{L_k}{L_{k-1}}\right)^d\left(1-\tfrac{A\sigma}{k^{2d}L_{k-1}^{d(d-1)}}\right),
\end{align*}
	where the $A\sigma$ term counts for all $(k-1)$-boxes contained in clusters in $Q_k(o)$. 
	Through analogous reasoning, in each of these $(k-1)$-cores in $Q_k^{\text{core}}(o)\cap\mathcal{W}^{\text{inn}}(k,o)$, the number of $(k-2)$-cores that are not contained in a cluster of bad $(k-2)$-boxes in a good $(k-1)$-box is bounded below by
\begin{align*}
\left(\tfrac{L_{k-1}}{L_{k-2}}\right)^d - A\sigma = \left(\tfrac{L_{k-1}}{L_{k-2}}\right)^d\left(1-\tfrac{A\sigma}{(k-1)^{2d}L_{k-2}^{d(d-1)}}\right).
\end{align*}
Iterating this argument all the way to $k=1$ allows us to deduce that the number of $1$-cores contained in $Q_k^{\text{core}}(o)\cap\mathcal{W}^{\text{inn}}(k,o)$ that are not contained in a cluster of any good box up to scale $k$ is bounded below by
\begin{align*}
\left(\frac{L_k}{L_1}\right)^d\left(\prod_{j=1}^{k-1}\zeta_{j}\right) \quad\text{where}\quad \zeta_j = 1-\tfrac{A\sigma}{(j+1)^{2d}L_{j}^{d(d-1)}}.
\end{align*}
From $\zeta_j$ given above, we deduce $\{\prod_{j=1}^{k-1}\zeta_j\}_{k\geq1}$ is a decreasing sequence that is uniformly bounded away from 0, so that 
\begin{align}
\zeta=\lim_{k\to\infty}\prod_{j=1}^{k-1}\zeta_{j}>0, \label{eq:zeta}
\end{align}
where $\zeta$ depends only on $A$ and $d$.

	If $Q_1(g)\subset Q_k^{\text{core}}(o)\cap\mathcal{W}^{\text{inn}}(k,o)$ and is not contained in a cluster of any good box up to scale $k$, then $Q_1(g)$ has positive feedback as $Q_k(o)$ has positive feedback. Recall $\mathcal{C}^{-}_1(g)$ is the largest component of non-seeds in $Q_1(g)$ with sites in the boundary $\partial Q_1(g)$ removed. By Lemma~\ref{lem:main_lemma_1}, all sites in $\mathcal{C}^{-}_1(g)$ are occupied by $FPP_1$, and in particular, all sites in $\mathcal{C}_1(g)\cap Q_1^{\text{core}}(g)$. As $Q_1(g)$ is good, for $\varepsilon$ we fix in \eqref{def:E_1} we have 
\begin{align}
\left|\mathcal{C}_1(g)\cap Q_1^{\text{core}}(g)\right|\geq (1-\varepsilon)\theta(1-p)\left(\tfrac{L_1}{3}\right)^d, \label{eq:corecount}
\end{align}
where we recall that $\theta(1-p)>0$ as $p<1-p_c^{\text{site}}$. As $1$-cores partition $\mathbb{Z}^d$, we deduce from \eqref{eq:zeta} and \eqref{eq:corecount} that the number of sites that $FPP_1$ occupies in $Q_k(o)$ is bounded below by
\begin{align}
3^{-d}\zeta(1-\varepsilon)\theta(1-p)L_k^d . \label{eq:density}
\end{align}
By dividing \eqref{eq:density} by $L_k^d$, we have that a lower bound of the density of sites occupied by $FPP_1$ in $Q_k(o)$ is a positive constant that does not depend on $k$. The result follows by taking $k\to\infty$.
\end{proof}

Recall the critical probability $p''_c$ defined in~\eqref{p''_c}. To prove Theorem~\ref{thm:strongsurvival}, we augment the definition of a good $1$-box as we will need to exploit the fact that $p<p''_c$. 
Consider the $1$-box $Q_1(i)$. Enumerate the connected components of filled seeds in $Q_1(i)$ as 
$$
\mathcal{S}_1(i),\mathcal{S}_2(i),\ldots,\mathcal{S}_{m_i}(i),
$$ 
so that $|\mathcal{S}_1(i)|\geq|\mathcal{S}_2(i)|\geq\cdots\geq|\mathcal{S}_{m_i}(i)|$, where $m_i$ is the number of connected components of filled seeds in $Q_1(i)$. Define the event
$$
E_6(i) = \left\{|\mathcal{S}_1(i)|\leq L_1/100\right\}.
$$
By setting $M=L_1/100$ in~\eqref{p''_c}, we observe that we can include $E_6(i)$ in the definition of $Q_1(i)$ being good and all the previous results in our multi-scale analysis follow in the same manner for $p<p''_c$. 

The proof of Theorem~\ref{thm:strongsurvival} also relies on extending the notion of parent and progenitor to sites, which one can view as \emph{scale 0} boxes. 
We define a site to be good if it does not host a seed and bad otherwise. A good site has positive feedback if it is occupied by $FPP_1$ and has negative feedback 
if it is occupied by $FPP_{\lambda}$. The parent of a site $v$ is the neighbouring site from which $FPP_1$ or $FPP_{\lambda}$ propagated to that site at time $\tau(v)$. 
Any bad or negative feedback site $v$ (i.e.\ a site occupied by $FPP_{\lambda}$) can be traced through the parent structure to a unique seed that was activated by $FPP_1$, which we call the progenitor in an analogous manner to scales $k\geq1$. Hence both $v^{\text{prog}}$ and $\mathcal{P}_0^{v\leftarrow v^{\text{prog}}}$ are well-defined. With these considerations, we are now in a position to prove Theorem~\ref{thm:strongsurvival}.

\begin{proof}[Proof of Theorem~\ref{thm:strongsurvival}]
Suppose $d\geq2$, $p<\min\{p''_c,1-p_c^{\text{site}}\}$ and $L_1$ is large enough and $\lambda$ is small enough so that Lemma~\ref{lem:cascading}, Lemma~\ref{lem:pp} and Theorem~\ref{maintheorem} hold. Assume that $Q_\kappa(o)$ has positive feedback for all $\kappa\geq1$. In this scenario, $FPP_1$ survives with positive probability and so strong survival will be established if we prove all connected components of $FPP_{\lambda}$ are almost surely finite.

Let $S$ be an arbitrary component of seeds and $S'$ be the set of sites whose progenitor is contained in $S$. Fix a scale $k$ large enough so that $S\subset Q_k^{\text{core}}(o)$. We note that such a $k$ exists as $p<p''_c\leq p_c^{\text{site}}$, and so all components of seeds are almost surely finite.

To set up the proof, we introduce the following definition. For $\ell\geq1$ and $B\subset\mathbb{Z}^d$, we define a finite set of $\ell$-boxes $\mathcal{O}_\ell$ to be an \emph{annulus of $\ell$-boxes separating $B$ from infinity} if any infinite self-avoiding path from $B$ intersects the core of an $\ell$-box from $\mathcal{O}_\ell$ and, moreover, 
we can find two disjoint connected sets $Z_1$ and $Z_2$ such that 
\begin{align}
\mathbb{Z}^d \setminus \bigcup_{Q_\ell\in \mathcal{O}_\ell}Q_\ell^{\text{core}} = Z_1 \cup Z_2, \label{eq:Z}
\end{align}
where $Z_1$ is finite and contains $B$ while $Z_2$ is unbounded.

Let $\mathcal{O}_{k-1}\subset\mathcal{W}^{\text{inn}}(k,o)$ be an annulus of $(k-1)$-boxes separating $Q_k^{\text{core}}(o)$ from infinity. 
As $\lambda$ is such that \eqref{cask} holds, then all $(k-1)$-boxes in $\mathcal{O}_{k-1}$ have positive feedback.
Let $\mathcal{O}_{k-2}$ be an annulus of $(k-2)$-boxes separating $Q_k^{\text{core}}(o)$ from infinity such that 
$$
\mathcal{O}_{k-2} \subset \bigcup_{Q_{k-1}(j)\in\mathcal{O}_{k-1}}\mathcal{W}^{\text{inn}}(k-1,j).
$$
As all $(k-1)$-boxes in $\mathcal{O}_{k-1}$ have positive feedback and $\lambda$ is such that $(\text{CAS}_{k-1})$ holds, then all $(k-2)$-boxes in $\mathcal{O}_{k-2}$ also have positive feedback. We continue this procedure until we construct an annulus of $1$-boxes that separates $Q_k^{\text{core}}(o)$ from infinity, $\mathcal{O}_1$ say, such that all $1$-boxes on $\mathcal{O}_1$ have positive feedback.

Let $Z_2$ be the unbounded connected set induced by $\mathcal{O}_1$ as given in \eqref{eq:Z}, noting that $\mathbb{Z}^d\setminus Z_2$ must be a finite set that contains $S$. 
If $S'$ is infinite, then by the definition of annulus there must exist a box 
$Q_1(j)\in \mathcal{O}_1$ whose core contains a site from $S'$. Hence, $\mathcal{C}_1^-(j)$ must intersect $S'$, which contradicts the fact that 
$Q_1(j)$ is of positive feedback.

%
%
%
\end{proof}

\section{Concluding remarks and open questions}
\label{sec:conclusion}

To establish the coexistence phase in Corollary~\ref{maincorollary}, we require that the $FPP_{\lambda}$ survives almost surely by letting $p\in(p'_c, 1-p_c^{\text{site}})$, 
which is only possible if $d\geq3$. 
This simplifies our analysis as we know that $FPP_\lambda$ survives almost surely in such situations, so by setting $\lambda$ small enough we can prove $FPP_1$ survives with positive probability as outlined 
in Theorem~\ref{maintheorem}. 
The most natural question is whether or not there is a coexistence phase without the almost sure survival of $FPP_{\lambda}$. 

\begin{question}
For $d\geq3$, does there exist a coexistence phase for all $p\in(0,1-p_c^{\text{site}})$?
\end{question}

The coexistence phase established in Corollary~\ref{maincorollary} leads one to wonder if there could be coexistence when $p<p'_c$, but the case $d=2$ is wide open in the sense that it is not known if there is coexistence for any choice of parameters.

\begin{question}
Does there exist a coexistence phase for FPPHE when $d=2$?
\end{question}

The main difficulty in approaching these two questions is controlling for the survival of both processes simultaneously. By Theorem~\ref{maintheorem}, we have that for all $p<1-p_c^{\text{site}}$, 
if $\lambda$ is small enough then $FPP_1$ survives with positive probability but there is no information about the survival of $FPP_{\lambda}$ unless we assume that the seeds are supercritical or use an enhancement argument with filled seeds. 
For a fixed $p$ it could be the case that for some values of $\lambda$ there is coexistence but once $\lambda$ is sufficiently small, there is strong survival. This issue is compounded as the strong survival  
and coexistence may not be disjoint phases, in that there could be a choice of $\lambda$ and $p$ for which both occur with positive probability. 
This in fact happens when the graph $G$ is composed of two copies of $\mathbb{Z}^2$ 
that are connected by a single edge between their respective origins. 
By Sidoravicius and Stauffer \cite[Theorem 1.3]{sidoravicius2019multi}, there is a phase of strong survival on $\mathbb{Z}^2$. 
If $\lambda$ and $p$ are such that there is strong survival on $\mathbb{Z}^2$, then strong survival and coexistence both have a positive probability of occurring on $G$.
Nonetheless, we expect that, on $\Z^d$, strong survival and coexistence are indeed disjoint phases. 

\begin{question}
For what graphs are the strong survival and coexistence phases distinct?
\end{question}

Even if we could assume that the strong survival and coexistence phases were distinct, distinguishing between them is still challenging due to the lack of monotonicity in FPPHE. 
As discussed in Section \ref{sec:related_models}, one can help the spread of $FPP_1$ by strategically planting additional seeds or increasing passage times and one can even construct graphs 
where increasing $p$ can increase the probability that $FPP_1$ survives. However, one would expect that if the underlying graph was transitive, then the probability of $FPP_1$ surviving 
would be a non-increasing function of $\lambda$ and $p$. 
Establishing for what graphs this property holds on is an interesting question.

\begin{question}
For what graphs is the probability of $FPP_1$ surviving 
a non-increasing function of $\lambda$ and $p$?
\end{question}

In Corollary~\ref{maincorollary}, a coexistence phase is established in which both $FPP_1$ and $FPP_{\lambda}$ occupy a positive density of sites. It is possible that coexistence occurs with $FPP_1$ surviving while only occupying a set of zero density; a regime we refer 
to as \emph{weak coexistence}. It is not difficult to prove there is a phase of weak coexistence on (non-amenable) trees 
but establishing a phase of weak coexistence on $\mathbb{Z}^d$ remains an open problem.

\begin{question}
For $d\geq2$, does there exist a regime of weak coexistence on $\mathbb{Z}^d$?
\end{question}

One may consider an alternative definition of survival of $FPP_{\lambda}$ wherein the spread of $FPP_{\lambda}$ from a single seed gives rise to an infinite connected component of $FPP_{\lambda}$.
More precisely, we construct a infinite directed forest on $\mathbb{Z}^d$ encoding the spread of $FPP_1$ and $FPP_{\lambda}$ as follows.
If $FPP_{1}$ (resp.\ $FPP_{\lambda}$) spreads from $x$ to successfully occupy $y$, we attach a directed edge from $x$ to $y$.
When a seed is activated by the attempted occupation of $FPP_1$,  we attach no directed edge with endpoint at that seed.
We define there to be \emph{infinite geodesic survival} if there exists at least one infinite connected component of $FPP_{\lambda}$ in the directed forest above.
Clearly infinite geodesic survival implies survival  considered in this paper.
A natural question is whether there can be coexistence with this definition.

\begin{question}
For $d\geq2$, can $FPP_1$ and $FPP_{\lambda}$ observe infinite geodesic survival simultaneously with positive probability?
\end{question}

We can use our multi-scale analysis with non-equilibrium feedback to recover the strong survival phase established in \cite{sidoravicius2019multi}, 
in which for $\lambda\in(0,1)$, there is strong survival for sufficiently small $p$. 
We briefly indicate how this can be done now. 

Fix $\lambda\in(0,1)$. The aim is to prove that $FPP_1$ survives while all connected components of $FPP_{\lambda}$ are finite for a small enough choice of $p$. 
The standard multi-scale construction proceeds in an almost identical manner except we alter the definition of good $1$-box. We define a $1$-box to be good if $FPP_1$ can readily spread in the box and there are no seeds. 
By setting $L_1$ large enough and $p$ small enough, the standard multi-scale construction then follows readily, so that good boxes at all scales occur with sufficiently high probability. 

The idea of the proof is that in a good $k$-box, all bad boxes at lower scales (away from the boundary) are well-separated and one can construct an encapsulation argument to control the spread of $FPP_{\lambda}$ from these bad boxes. 
Indeed, as $FPP_{\lambda}$ must spread from bad boxes, it is reasonable to construct a parent and progenitor structure analogous to the one in Section~\ref{sec:Parent_progenitor_def} to identify the good boxes that 
$FPP_{\lambda}$ manages to occupy sites within. For this, we tune the definition of positive and negative feedback so that $FPP_1$ occupies all sites in a good box that are sufficiently far from bad boxes at lower scales. 
For example, a $1$-box has positive feedback if $FPP_1$ occupies a site away from the boundary at a relatively fast time after the box was first entered. 
Through a cascading argument, $FPP_{\lambda}$ is confined to a collection of regions in this $k$-box that can no longer spread while $FPP_1$ occupies every other site. 
By taking the limit $k\to\infty$, $FPP_1$ survives while $FPP_{\lambda}$ fails to observe an infinite connected component. 
The spirit of this proof is close to the one given in \cite{sidoravicius2019multi}, but our multi-scale analysis with non-equilibrium feedback offers a more systematic approach 
through the introduction of positive and negative feedback, leading to a clearer and more streamlined proof.

\section*{Acknowledgements}
A.\ Stauffer is thankful to Ioan Manolescu for useful discussions. Part of this work was completed during a visit by T.\ Finn to the University of Roma Tre, for which he is thankful for their generous hospitality. 

\bibliographystyle{abbrv}

\bibliography{references(2)}

\end{document}